\newtheorem{thm}{Theorem}
\newtheorem{lem}[thm]{Lemma}
\newtheorem{prop}[thm]{Proposition}
\newtheorem{cor}[thm]{Corollary}
\theoremstyle{remark}
\newtheorem{rmk}[thm]{Remark}
\newtheorem{example}[thm]{Example}
\theoremstyle{definition}
\newtheorem{defi}[thm]{Definition}
\numberwithin{thm}{section}
\numberwithin{equation}{section}
\def\R{{\mathbb R}}
\newcommand{\vep}{\varepsilon}
\newcommand{\ol}{\overline}
\newcommand{\ul}{\underline}
\newcommand{\bpm}{\begin{pmatrix}}
\newcommand{\epm}{\end{pmatrix}}
\newcommand{\beq}{\begin{equation}}
\newcommand{\eeq}{\end{equation}}
\DeclareMathOperator*{\limsups}{limsup^\ast}
\title[Singular Dirichlet problems]{\protect{Singular Dirichlet boundary problems for a class of fully nonlinear parabolic equations in one dimension}}
\author{Takashi Kagaya}
\address{Takashi Kagaya, Muroran Institute of Technology, Japan}
\email{kagaya@muroran-it.ac.jp}
\begin{document}

\begin{abstract}
In this paper, we deal with the initial value problem for a class of fully nonlinear parabolic equations with a singular Dirichlet boundary condition in one space dimension. 
The interior equation includes, for example, a fully nonlinear $p$-Laplace type heat equation and a $\beta$-power type curvature flow. 
The singular Dirichlet boundary condition depicts, for example, the asymptoticness of the ends of complete curve to parallel two lines in geometric flow of graphs. 
We study the dependence of the existence and non-existence of solution to the problem on the interior equation and the boundedness of the initial function. 
\end{abstract}

\subjclass[2020]{35A01, 35K61, 35D40}
\keywords{fully nonlinear parabolic equations, singular Dirichlet problem, viscosity solutions, existence and non-existence theory}

\maketitle

\section{Introduction}

This paper is concerned with singular Dirichlet boundary problem for fully nonlinear differential equation. 
Let $b>0$ and $f,g \in C(\mathbb{R})$. 
We are mainly interested in the following equation in one space dimension: 
\begin{numcases}{}
u_t-f(g(u_x) u_{xx})=0 & \text{in $(-b, b)\times (0, \infty)$,} \label{flow eq}\\
\lim_{x \to \pm b} u(x, t)= \infty  & \text{for any $t>0$,} \label{bdry}\\
u(\cdot , 0)=u_0 & \text{in  $(-b, b)$,}  \label{initial}
\end{numcases}
where $u_0$ is a given continuous function in $(-b, b)$ satisfying several assumptions to be elaborated later, and $f$ and $g$ are given continuous function satisfying 
\begin{itemize}
\item[(A1)] $f \in C(\mathbb{R})$ is strictly increasing with $f(0) = 0$ and $f(s)\to \pm \infty$ as $s \to \pm \infty$. 
\item[(A2)] $g \in C(\mathbb{R})$ is positive and 
\beq\label{exponent alpha}
 |s|^{\alpha}g(s) \to C_{g\pm} \quad \text{as } s \to \pm \infty 
 \eeq
for some exponent $\alpha \in \mathbb{R}$ and constants $C_{g+}, C_{g-} > 0$. 
\end{itemize}
One of the examples of the interior equation \eqref{flow eq} is a $p$-Laplace type heat equation 
\begin{equation}\label{p-heat}  
u_t = |\Delta_p u + \varepsilon \Delta u|^{\beta_1-1} (\Delta_p u + \varepsilon \Delta u) \quad \text{in} \; \; (-b, b) \times (0, \infty), 
\end{equation}
where $p \ge 2, \beta_1 > 0$ and $\varepsilon > 0$ are constants. 
Since $\Delta_p u + \varepsilon \Delta u$ can be re-written as 
\[ \Delta_p u + \varepsilon \Delta u = \{(p-1)|u_x|^{p-2} + \varepsilon\} u_{xx}, \]
it is a special case of \eqref{flow eq} with 
\[ f(s) = |s|^{\beta_1 - 1} s, \quad g(s) = (p-1) |s|^{p-2} + \varepsilon \]
for $s \in \mathbb{R}$. 
The function $g$ satisfies (A2) with $\alpha = 2-p$. 
Related to this equation, we refer to \cite{LYCGX, LZ, SL, To} for the existence theory for quasi-linear $p$-Laplace type heat equations $u_t - \Delta_p u = h(x, u, \nabla u)$ in whole space. 
In our problem, we need a perturbation term $\varepsilon \Delta u$, but the $p$-Laplacian may have nonlinear effects $f$. 

Another typical example of \eqref{flow eq} is the following equation: 
\begin{equation}\label{cf-eq} 
u_t = (1+|u_x|^2)^{\frac{1-3\beta_2}{2}} |u_{xx}|^{\beta_2 -1} u_{xx} \quad \text{in} \; \; (-b, b) \times (0, \infty), 
\end{equation}
where $\beta_2 > 0$ is a constant. 
It is a special case of \eqref{flow eq} with 
\[ f(s) = |s|^{\beta_2 -1} s, \quad g(s) = (1 + s^2)^{\frac{1 - 3\beta_2}{2\beta_2}} \]
for $s \in \mathbb{R}$. 
The function $g$ satisfies (A2) with $\alpha = 3 - \frac{1}{\beta_2}$. 
Such an equation has a geometric interpretation: it describes the motion of a graph-like planar curve whose normal velocity equals to the singed $\beta_2$-th power of its curvature. 
The singular Dirichlet boundary condition \eqref{bdry} depicts that the graph $y= u(x,t)$ is a complete curve whose two ends are asymptotic to two parallel lines $x = \pm b$ at each time $t > 0$. 
This kind of signed power type curvature flow of complete curves was studied in \cite{CCD} to prove the well-posedness when the initial curve is the graph of a smooth and strictly convex function and the convergence of the solution to a traveling wave solution. 
Our study is the extension of their existence theory for the general interior equation \eqref{flow eq}. 
We will discuss the difference between their results and our results later. 

The purpose of this paper is to study the existence and non-existence thresholds of solution to \eqref{flow eq}--\eqref{initial} depending on $f, g$ and $u_0$, which is motivated by some facts for the following singular Neumann boundary problem: 
\begin{equation} \label{singular-neu}
\begin{cases}
u_t-f(g(u_x) u_{xx})=0 & \text{in $(-b, b)\times (0, \infty)$,} \\
\lim_{x \to \pm b} u_x(x, t)= \pm \infty  & \text{for any $t>0$,} \\
u(\cdot , 0)=u_0 & \text{in  $[-b, b]$} 
\end{cases} \end{equation}
with the assumptions (A1) and (A2). 
The facts are as follows: 
\begin{itemize}
\item[(I)] Traveling wave solutions formed by $w(x,t) = W(x) + ct$ with $W \in C^2((-b, b))$ and $c \in \mathbb{R}$ uniquely exist up to vertically translations if $\alpha > 1$. Furthermore, $W$ is bounded if $\alpha > 2$ (see \cite[Theorem 1.3]{KL}) and satisfies 
\[ \lim_{x \to \pm b} W(x) = \infty \] 
if $1 < \alpha \le 2$ (see (a) in Proposition \ref{prop:exists-tw}). If $\alpha \le 1$, there is no traveling wave solution (see (b) in Proposition \ref{prop:exists-tw}). 
\item[(II)] If $u_0 \in C([-b, b])$ and $\alpha > 2$, there is a unique viscosity solution $u \in C([-b, b] \times [0, \infty))$ exists. In particular, $u(\cdot, t)$ is bounded on $[-b, b]$ for any $t > 0$ since $u(\cdot, t)$ is of class $C([-b, b])$. Furthermore, $u(\cdot, t)$ converges to a traveling wave solution in $L^\infty([-b, b])$ as $t \to \infty$ under the convexity assumption of $u_0$ and some additional assumptions on $f$ and $g$ (see \cite[Theorem 1.1 and Theorem 1.4]{KL}). 
\item[(III)] If $\alpha \le 2$, there is no viscosity solution $u \in C([-b, b] \times [0, \infty))$. In particular, if we assume the existence, then we can prove that $u(\pm b, t) = \infty$ for any $t>0$, which contradicts to the boundedness of $u(\cdot, t)$ (see \cite[Theorem 1.2]{KL}). 
\end{itemize}

Due to the fact (II), we can see that the traveling wave solutions are stable for the problem \eqref{flow eq}--\eqref{bdry} if $\alpha > 2$. 
Even for the case $1 < \alpha \le 2$, we may expect the stability of the traveling waves. 
Indeed, for the singed power type curvature flow \eqref{cf-eq} with the boundary condition \eqref{bdry}, the previous work \cite{CCD} shows that the bounded traveling waves in case $\beta_2 > 1$ and the unbounded traveling waves in case $\frac{1}{2} < \beta_2 \le 1$ are stable. 
We note that the above cases correspond to $\alpha > 2$ and $1 < \alpha \le 2$, respectively, due to the relation $\alpha = 3 - \frac{1}{\beta_2}$. 
In terms of this stability of the traveling waves, in the case $1 < \alpha \le 2$, solutions to \eqref{flow eq}--\eqref{initial} with bounded $u_0 \in C([-b, b])$ may diverge to $\infty$ at the boundary points $x=\pm b$ as the time $t$ passes. 
The fact (III) shows that the solutions ``instantaneously'' blow up at the boundary points $x=\pm b$ in the case $\alpha \le 2$. 

Therefore, a natural question is whether the existence of a unbounded solution to \eqref{flow eq}--\eqref{initial} has a threshold with respect to $\alpha$, as does the existence of the traveling wave solutions to \eqref{singular-neu}, or whether it depends on other conditions. 
In this paper, we will show that the existence of solutions to \eqref{flow eq}--\eqref{initial} also depends on a condition of $f$ and the boundedness of $u_0$ in addition to $\alpha$ in the assumption (A2). 

For the initial function $u_0$, we assume either of the following conditions: 
\begin{itemize}
\item[(B1)] $u_0$ is of class $C([-b, b])$

\item[(B2)] $u_0$ is of class $C((-b, b))$ and satisfies 
\begin{equation}\label{initial div} 
\lim_{x \to \pm b} u_0(x) = \infty, \quad \limsup_{x \to b } u_0(x) (b-x)^\gamma < \infty, \quad \limsup_{x \to -b} u_0(x) (b+x)^\gamma < \infty 
\end{equation}
for some $\gamma > 0$

\item[(B3)] $u_0$ is of class $C((-b,b))$ and satisfies
\begin{equation}
\lim_{x \to b} \left(u_0(x) - D_+ \psi_{\gamma_+} (b-x)\right)=\hat{C}_+ \; \; \text{and} \; \; \lim_{x \to -b} \left(u_0(x) - D_-\psi_{\gamma_-}(b+x)\right)=\hat{C}_- \label{as-order-1}
\end{equation}
for some constants $\gamma_\pm \ge 0, D_\pm >0$ and $\hat{C}_\pm \in \mathbb{R}$, where 
\begin{equation}\label{def-psi} 
\psi_\gamma (s) := 
\begin{cases} 
s^{-\gamma} & \text{if} \quad \gamma > 0, \\
- \log s & \text{if} \quad \gamma = 0. 
\end{cases}
\end{equation}
\end{itemize}
We note that (B1) yields the boundedness of $u_0$. 
(B3) is a stronger assumption than (B2), which will be needed to prove the uniqueness of the solution. 

We here give remarks on some difficulties to establish the existence theory. 
As is easily seen, there are two major difficulties caused by the structure of the equation: high nonlinearity of the operator due to the appearance of $f$ and strong degeneracy of the elliptic operator because of the rapid decay of $g(s)$ as $s \to \infty$ when $\alpha > 0$. 
One therefore cannot expect solutions to be smooth in these circumstances. 
The framework of viscosity solutions (cf.\ \cite{CIL}) then becomes a natural choice to study \eqref{flow eq}. 
Singular Dirichlet and Neumann problems for semi-linear elliptic equations on a bounded domain $\Omega \subset \mathbb{R}^n$ are studied in \cite{LaL2} with the boundary condition interpreted in the viscosity sense: $u- \phi$ achieves its minimum only at interior points within $\Omega$ for any $C^2(\overline{\Omega})$ function $\phi$. 
The viscosity boundary condition is a weak form of two singular boundary conditions, whereas we only need to deal with the singular Dirichlet boundary condition in our problem. 
We thus adopt the boundary condition \eqref{bdry} for the framework of viscosity solutions in the classical sense. 
A precise definition of viscosity solutions is given in Section \ref{sec:viscosity}. 

We will use comparison arguments to prove the uniqueness and adopt Perron's method (cf.\ \cite{CIL}) when we prove the existence of the solution. 
We remark that the well-posedness was established in \cite{AtBa, BaDa1, BaDa2} for generalized Dirichlet boundary value problems and in \cite{KL} for the singular Neumann boundary problem. 
Our approach is based on their arguments, especially in the proof of comparison principle. 
However, since $\sup_{x \in (-b,b)} \phi(x) - \psi(x)$ is not always a bounded value for two functions $\phi, \psi: (-b, b) \to \mathbb{R}$ which diverge to $\infty$ at the boundary points $x=\pm b$, their arguments can not be expanded directly to our problem. 
Roughly speaking, to overcome this difficulty, we introduce inequalities
\begin{align}
&u_t \ge \max\{f((1+\delta) g(u_x) u_{xx}), 0\} \quad \text{in} \; \; (-b, b) \times (0, \infty), \label{scale eq1}\\
&u_t \le \min\{f(g(u_x) u_{xx}), 0\} \quad \text{in} \; \; (-b, b) \times (0, \infty) \label{scale eq2}
\end{align}
for $\delta > 0$. 
Since the introduction enable us to apply the scaling argument easily, we can discuss the comparison between (i) sub-solution to \eqref{scale eq2} with \eqref{bdry} and super-solution to \eqref{flow eq} and \eqref{bdry}; or (ii) sub-solution to \eqref{flow eq} and \eqref{bdry} and super-solution to \eqref{scale eq1} with \eqref{bdry}. 
Via constructing good super- and sub-solutions to \eqref{scale eq1} and \eqref{scale eq2}, this comparison argument yields that the divergence rate \eqref{as-order-1} is preserved in time for special $\gamma_\pm \ge 0$ if $\alpha > 1$, namely, $u(\cdot, t)$ also satisfies the condition \eqref{as-order-1}. 
Due to this preservation, we can establish a comparison theory for \eqref{flow eq}--\eqref{bdry} under the assumption (B3), which implies the uniqueness of the solution. 

Final remark on difficulties of our problem is that the existence or non-existence theory depends on $f, \alpha$ and $u_0$. 
This structure can be seen when we will construct super-solutions to adopt Perron's method in the existence theory and when we will construct a sequence of sub-solutions to prove the non-existence theory. 
Heuristically arguments to construct the super- and sub-solutions are noted in Section \ref{sec:exist}. 

We first present the existence theory when $\alpha > 2$. 
In this case, we can prove the existence of a solution to \eqref{flow eq}--\eqref{initial} if we assume the divergence property of $u_0$ at the boundary points as follows:

\begin{thm}\label{thm:existence}
Let $b>0$. 
Assume that $f, g$ and $u_0$ respectively satisfy (A1), (A2) with $\alpha > 2$. 
If $u_0$ satisfies (B2), then there exists a viscosity solution $u: (-b, b) \times [0, \infty) \to \mathbb{R}$ to \eqref{flow eq}--\eqref{initial}. 
If $u_0$ satisfies (B3) with $\gamma_\pm \ge 0$, then the viscosity solution is continuous and unique. 
\end{thm}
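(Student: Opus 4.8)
The plan is to build a viscosity solution to \eqref{flow eq}--\eqref{initial} by Perron's method, using sub- and super-solutions that encode the boundary blow-up \eqref{bdry} correctly, and then to upgrade to continuity and uniqueness via the comparison machinery sketched in the introduction. The crucial input is that $\alpha > 2$, which by fact (II) (i.e.\ \cite[Theorem 1.1]{KL}) means there exist \emph{bounded} traveling-wave-type barriers, and more to the point, that the singular Neumann problem is solvable with continuous data on $[-b,b]$; these will give us a good supply of comparison functions near the boundary.

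First I would set up the barriers at the boundary points. Near $x = b$, using (A2) with $\alpha > 2$ and the elementary form of $\psi_\gamma$ in \eqref{def-psi}, I would look for a stationary (or linearly-in-$t$) super-solution of the form $\overline{w}(x) = D_+\psi_{\gamma_+}(b-x) + (\text{const})$ and, symmetrically, a sub-solution. The point of choosing $\psi_\gamma$ with $\gamma > 0$: if $W(x) \sim (b-x)^{-\gamma}$ then $W_x \sim (b-x)^{-\gamma-1}$ and $W_{xx} \sim (b-x)^{-\gamma-2}$, so $g(W_x)W_{xx} \sim (b-x)^{(\gamma+1)\alpha}(b-x)^{-\gamma-2} = (b-x)^{(\gamma+1)\alpha - \gamma - 2}$, which tends to $0$ as $x\to b$ precisely when $(\gamma+1)\alpha > \gamma + 2$, i.e.\ when $\gamma(\alpha - 1) > 2 - \alpha$; since $\alpha > 2$ this holds for \emph{all} $\gamma > 0$. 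Hence $f(g(W_x)W_{xx}) \to f(0) = 0$ by (A1), so such a $W$ is an approximate stationary solution near $x=b$, and adding suitable constants (and a small linear-in-$t$ drift $\pm Ct$) turns it into genuine local super- and sub-solutions. Matching the exponent $\gamma$ in (B2) against these barriers — and in the (B3) case matching the precise asymptotic profile $D_\pm\psi_{\gamma_\pm}$ together with the additive constant $\hat C_\pm$ — gives barriers that lie above/below $u_0$ near $\pm b$ and force \eqref{bdry} on any Perron solution squeezed between them.

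Next I would run Perron's method (cf.\ \cite{CIL}) in the classical viscosity framework: take $\underline{u}$ the supremum (and $\overline{u}$ the infimum) of the relevant family of sub-/super-solutions that agree with $u_0$ at $t=0$ and respect the boundary barriers, show the standard stability properties (the sup of sub-solutions is a sub-solution, the usc/lsc envelopes are super-/sub-solutions), and conclude that the Perron function $u$ is a viscosity solution of \eqref{flow eq} satisfying $u(\cdot,0) = u_0$ and \eqref{bdry}. The only nonstandard adjustment is the domain: since the spatial domain $(-b,b)$ is open and $u$ blows up at the endpoints, I would work on expanding subintervals $[-b+\epsilon, b-\epsilon]$ where the boundary values are finite and controlled by the barriers, solve there by the known theory, and pass to the limit $\epsilon \downarrow 0$ using a diagonal/monotonicity argument together with the locally uniform barrier bounds to get a solution on all of $(-b,b)\times[0,\infty)$. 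For the second assertion, under (B3) the sharp barriers show that the asymptotic expansion \eqref{as-order-1} is propagated in time (this is exactly the "preservation of divergence rate" mechanism advertised in the introduction, relying on the inequalities \eqref{scale eq1}--\eqref{scale eq2} and the scaling argument), so $u(\cdot,t)$ again satisfies a bound of type (B3); feeding this into the comparison principle for \eqref{flow eq}--\eqref{bdry} established via \eqref{scale eq1}--\eqref{scale eq2} gives both continuity of $u$ up to $t=0$ and uniqueness.

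The main obstacle I expect is the boundary behaviour in the limit $\epsilon \downarrow 0$: one must verify that the Perron solution genuinely attains \eqref{bdry} (and, at $t=0$, attains $u_0$ continuously, in the (B3) case) rather than merely being trapped between barriers in the interior. Concretely, the delicate point is constructing sub-/super-solutions that simultaneously (i) are ordered with $u_0$ at $t=0$, (ii) blow up at $x = \pm b$ at the rate dictated by (B2) or the profile in (B3), and (iii) are compatible with the high nonlinearity $f$ and strong degeneracy of $g$ — this is where the restriction $\alpha > 2$ is really used, since it is exactly the regime in which one can find such barriers with \emph{any} prescribed polynomial/logarithmic blow-up rate. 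Handling $f$ being merely continuous (not Lipschitz or smooth) in the barrier computations, and the matching of additive constants $\hat C_\pm$ in the (B3) uniqueness part, will require the scaling trick via \eqref{scale eq1}--\eqref{scale eq2} rather than a direct comparison; everything else is a careful but routine adaptation of \cite{KL, AtBa, BaDa1, BaDa2}.
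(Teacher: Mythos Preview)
Your overall plan --- Perron's method on $(-b,b)$ with traveling-wave barriers built from the profiles $\psi_\gamma(b\mp x)$, plus the scaling/comparison machinery of \eqref{scale eq1}--\eqref{scale eq2} for the (B3) uniqueness part --- is exactly the paper's approach, and your barrier computation $(\gamma+1)\alpha-\gamma-2>0$ for all $\gamma>0$ when $\alpha>2$ is the right reason the construction works in this regime.

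Two places where the paper's implementation differs from yours and is cleaner. First, the paper never shrinks the domain to $[-b+\epsilon,b-\epsilon]$. Instead it builds, for each $\varepsilon>0$, a smooth approximation $v_{\varepsilon,0}$ of $u_0$ from above of the form (smooth bounded piece) $+\,h(x)$ with $h$ satisfying (h1), sets $v_\varepsilon(x,t)=v_{\varepsilon,0}(x)+\varepsilon+M_\varepsilon t$ (this is exactly your ``linear-in-$t$ drift''), and a \emph{bounded} smooth approximation $w_{\varepsilon,0}$ from below with $w_\varepsilon(x,t)=w_{\varepsilon,0}(x)-\varepsilon+m_\varepsilon t$; then it runs Perron between $w_\varepsilon$ and $v_\varepsilon$ directly on $(-b,b)$ and passes to the relaxed half-limit in $\varepsilon$. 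Your subdomain route would force you to specify and justify Dirichlet data at $x=\pm(b-\epsilon)$ for a degenerate fully nonlinear problem, which is an avoidable complication and not covered by any ``known theory'' you cite. Second, and more substantively, the step you gloss over --- verifying that $u_*$ actually satisfies \eqref{bdry} --- is handled in the paper not by the super-barriers (which only bound from above) but by a specific global sub-solution: the \emph{lower convex envelope} $\tilde u_0$ of $u_0$, which is convex, continuous, satisfies $\lim_{x\to\pm b}\tilde u_0=\infty$ because $u_0$ does under (B2), and is therefore a stationary sub-solution of \eqref{flow eq} lying below every $v_\varepsilon$ by Theorem~\ref{thm:com-conti}. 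Squeezing $u_*$ above $\tilde u_0$ is what forces the boundary blow-up. Your ``symmetric sub-solution'' $D_+\psi_{\gamma_+}(b-x)$ is only a local profile; you would still need a global convex sub-solution below $u_0$, and the convex envelope is the natural choice.
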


On the other hand, if $1 < \alpha \le 2$, the problem \eqref{flow eq}--\eqref{initial} has a solution independent of the boundedness of $u_0$ as follows: 

\begin{thm}\label{thm:existence2}
Let $b>0$. 
Assume that $f, g$ and $u_0$ respectively satisfy (A1), (A2) with $1 < \alpha \le 2$. 
If $u_0$ satisfies (B1) or (B2), then there exists a viscosity solution $u: (-b, b) \times [0, \infty) \to \mathbb{R}$ to \eqref{flow eq}--\eqref{initial}. 
If $u_0$ satisfies (B3) with $\gamma_\pm \ge \frac{2-\alpha}{\alpha -1}$, then the viscosity solution is continuous and unique. 
\end{thm}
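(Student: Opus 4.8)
The plan is to follow the proof of Theorem~\ref{thm:existence}: existence by Perron's method and, under the stronger hypothesis (B3), uniqueness and continuity from the comparison principle of the preceding sections. The only change is that the barriers must now diverge at $x=\pm b$, and the critical rate is $\gamma^\ast:=\frac{2-\alpha}{\alpha-1}\ge 0$ (so $\gamma^\ast=0$, $\psi_{\gamma^\ast}=-\log$, exactly when $\alpha=2$; cf.\ \eqref{def-psi}). For existence I would first build a global super-solution of \eqref{flow eq}--\eqref{bdry}; following the heuristics of Section~\ref{sec:exist}, the candidate is
\[
\overline{u}(x,t):=A\big(\psi_{\gamma}(b-x)+\psi_{\gamma}(b+x)\big)+B(x)\,t+C ,
\]
with $\gamma\ge\gamma^\ast$ and $B>0$ bounded below and diverging at $x=\pm b$ strictly slower than $\psi_\gamma$. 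Using (A2) one computes that near $x=\pm b$ the product $g(\overline{u}_x)\overline{u}_{xx}$ is comparable to a positive constant times $(b\mp x)^{(\alpha-1)\gamma+\alpha-2}$, which stays bounded precisely because $\gamma\ge\gamma^\ast$; hence $\overline{u}_t=B(x)\ge f(g(\overline{u}_x)\overline{u}_{xx})$ on $(-b,b)\times(0,\infty)$ once $B,C$ are large, and \eqref{bdry} holds trivially. Since by (B1) or (B2) the function $u_0$ diverges at $x=\pm b$ no faster than $\psi_\gamma$ for $\gamma$ large, one then fixes $A,C$ so that $\overline{u}(\cdot,0)\ge u_0$.

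Next I would construct a sub-solution of \eqref{flow eq} lying below $u_0$ at $t=0$ and diverging at $x=\pm b$ for every $t>0$; a single such sub-solution forces \eqref{bdry} for the Perron solution. When $u_0$ satisfies (B2) --- in particular under (B3) --- it suffices to take a time-independent convex function (any convex function is automatically a sub-solution of \eqref{flow eq}, since $g>0$ and $f$ is increasing give $f(g(v_x)v_{xx})\ge f(0)=0$ whenever $v_{xx}\ge 0$) that diverges at the ends no faster than $u_0$, e.g.\ $D_\pm\psi_{\gamma_\pm}(b\mp x)-M$ under (B3); the existence of such a convex minorant of $u_0$ follows from $u_0\to\infty$ at $x=\pm b$. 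When $u_0$ is merely bounded, (B1), no divergent convex function lies below it, and one must exploit the strong degeneracy: a sub-solution of the prototype form
\[
\underline{u}(x,t):=C_0+t\,\Phi(x) ,
\]
where $\Phi$ is convex, bounded in the interior, and diverges at $x=\pm b$ \emph{slowly enough}, the admissible rate depending on $\alpha$ and on the growth of $f$ at infinity. The mechanism is that for such a slowly divergent $\Phi$ one still has $g(\underline{u}_x)\underline{u}_{xx}\to\infty$ as $x\to\pm b$ --- this is where $1<\alpha$, i.e.\ the slow decay of $g$, is essential, and where $\alpha\le 2$ makes the relevant exponent work out (it is also what makes the traveling waves of \eqref{singular-neu} unbounded, see Proposition~\ref{prop:exists-tw}, so that divergence of the solution at the ends is consistent) --- whence by (A1) $f(g(\underline{u}_x)\underline{u}_{xx})\to\infty$ and dominates $\underline{u}_t=\Phi(x)$ near the ends, while in the interior $\Phi$ is chosen so that $\underline{u}_t\le 0\le f(g(\underline{u}_x)\underline{u}_{xx})$. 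With these two barriers, the usual Perron argument gives that $u^\ast$ is a sub-solution and $u_\ast$ a super-solution of \eqref{flow eq}, that $u_\ast$ lies above the divergent sub-solution so $\lim_{x\to\pm b}u_\ast(x,t)=\infty$ for $t>0$ (while $u^\ast$ satisfies \eqref{bdry} vacuously), and that local barriers from above and below at interior points give $u_\ast(\cdot,0)=u^\ast(\cdot,0)=u_0$; hence $u$ is a viscosity solution of \eqref{flow eq}--\eqref{initial}.

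For uniqueness and continuity under (B3) with $\gamma_\pm\ge\gamma^\ast$, I would invoke the comparison principle of the preceding sections. The decisive point, announced in the introduction, is that the asymptotics \eqref{as-order-1} propagate in time with the \emph{same} $\gamma_\pm$ and $D_\pm$: sandwiching any solution between $D_\pm\psi_{\gamma_\pm}(b\mp x)\pm(Kt+L)$ --- and it is exactly $\gamma_\pm\ge\gamma^\ast$ that makes the upper one a super-solution of \eqref{scale eq1}, as in the computation for $\overline{u}$ above --- shows that $u(\cdot,t)-D_\pm\psi_{\gamma_\pm}(b\mp x)$ stays bounded near $x=\pm b$, locally uniformly in $t$. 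Hence for two solutions $u_1,u_2$ the difference $(u_1-u_2)(\cdot,t)$ is bounded near $x=\pm b$, so $\sup_{(-b,b)}(u_1-u_2)(\cdot,t)<\infty$; this is precisely the finiteness that was missing to run the doubling-of-variables comparison, which together with the rescaled inequalities \eqref{scale eq1}, \eqref{scale eq2} yields $u_1\le u_2$ and, symmetrically, $u_1\equiv u_2$. Applying the same comparison to the sub-solution $u^\ast$ and the super-solution $u_\ast$ gives $u^\ast\le u_\ast$, hence $u^\ast=u_\ast$, so the solution is continuous.

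The step I expect to be the main obstacle is the boundary-forcing sub-solution $\underline{u}$ in the bounded case (B1): one must produce a sub-solution that is bounded at $t=0$ yet diverges at $x=\pm b$ for every $t>0$, so that $\underline{u}_t$ is necessarily unbounded near the ends and has to be absorbed into $f(g(\underline{u}_x)\underline{u}_{xx})$; getting the divergence rate of $\Phi$ right --- and, on the uniqueness side, recognizing the corresponding threshold $\gamma_\pm\ge\gamma^\ast$ in (B3) --- is where the interplay of $\alpha$, the growth of $f$, and the boundedness of $u_0$ flagged in the introduction really enters.
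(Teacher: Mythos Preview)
Your outline for the (B2) case and for uniqueness/continuity under (B3) matches the paper's approach closely: the super-solution is built from the profile $\psi_\gamma$ with $\gamma\ge\gamma^\ast$ (the paper uses a sequence $v_{\varepsilon,0}$ of such profiles approximating $u_0$ from above together with a \emph{constant} speed $M_\varepsilon$, which is simpler than your space-dependent $B(x)$ and avoids extra $B''(x)t$ terms in $\overline{u}_{xx}$), the boundary-forcing sub-solution under (B2) is the lower convex envelope of $u_0$ (a stationary sub-solution), and uniqueness under (B3) with $\gamma_\pm\ge\gamma^\ast$ is exactly Theorem~\ref{thm:com2}.

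The genuine gap is in the (B1) case. Your ansatz $\underline{u}(x,t)=C_0+t\,\Phi(x)$ with $\Phi$ convex and $\Phi(x)\to+\infty$ as $x\to\pm b$ cannot be a sub-solution of \eqref{flow eq} on all of $(-b,b)\times(0,\infty)$. Fix any $x_0$ near $b$ with $\Phi(x_0)>0$; then $\underline{u}_t(x_0,t)=\Phi(x_0)>0$ is independent of $t$, while as $t\to0^+$ one has $t\Phi'(x_0)\to0$ and $t\Phi''(x_0)\to0$, hence $g(\underline{u}_x)\underline{u}_{xx}\to g(0)\cdot0=0$ and $f(g(\underline{u}_x)\underline{u}_{xx})\to f(0)=0$. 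The sub-solution inequality therefore fails for all small $t>0$, regardless of the divergence rate of $\Phi$. Note also that (A3) is \emph{not} assumed in Theorem~\ref{thm:existence2}, so you have no quantitative growth of $f$ at infinity to play with; your parenthetical ``the admissible rate depending on \dots\ the growth of $f$'' is appealing to information that is not available.

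The paper handles (B1) by a different mechanism: rather than a single divergent sub-solution, it imports from \cite{KL} a \emph{sequence} $\tilde u_k$ of sub-solutions (Definition~\ref{defi:sub by KL}, Lemma~\ref{lem:div-KL}), each continuous on $[-b,b]\times[0,\infty)$ and in particular \emph{bounded} for every $t$, with $\tilde u_k(\cdot,0)$ uniformly bounded in $k$ but $\tilde u_k(\pm b,t)\to\infty$ as $k\to\infty$ for each fixed $t>0$. Because each $\tilde u_k$ is bounded, the elementary comparison Theorem~\ref{thm:com1} applies against the super-solutions $v_\varepsilon$, yielding $\tilde u_k+D\le u$ on $Q_0$; sending $k\to\infty$ then forces $\liminf_{x\to\pm b}u_\ast(x,t)=\infty$. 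The explicit form of $\tilde u_k$ (a double-logarithmic profile glued to a circular arc, with a carefully chosen moving interface $x(k,t)$) is not separable in $(x,t)$, and its construction---valid under (A1) and (A2) with $\alpha\le2$ alone---is precisely the missing ingredient you flagged as ``the main obstacle''.
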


In the case $\alpha \le 1$, the existence and non-existence of the solution for the singular Dirichlet boundary problem depend on the divergence rate of $f(s)$ as $s \to \pm \infty$. 
Therefore, we assume the following additional condition for $f$: 
\begin{itemize}
\item[(A3)] $f$ satisfies 
\[ |s|^{-\beta} f(s) \to \pm C_{f\pm} \quad \text{as} \; \; s \to \pm \infty \]
for some $\beta > 0$ and constants $C_{f+}, C_{f-} > 0$. 
\end{itemize}
A typical example of $f$ satisfying (A1) and (A3) is $f(s) = |s|^{\beta-1} s$. 
Then, we have the following result. 

\begin{thm}\label{thm:non-existence}
Let $b>0$. 
Assume that $f$ satisfies (A1) and (A3), and assume also $g$ satisfies (A2) with $\alpha \le 1$. 
\begin{itemize}
\item[(a)] If $\alpha < 1$, $\beta \ge \frac{1}{1-\alpha}$ and $u_0 \in C((-b, b))$ satisfies $\inf_{x \in (-b, b)} u_0(x) > -\infty$, then there is no viscosity solution $u: (-b, b) \times [0, \infty) \to \mathbb{R}$ to \eqref{flow eq}--\eqref{initial}. 
\item[(b)] If $\alpha = 1$ or $\alpha < 1$ and $\beta < \frac{1}{1-\alpha}$, assume $u_0$ satisfies (B1) or (B2). Then there exists a viscosity solution $u: (-b, b) \times [0, \infty) \to \mathbb{R}$ to \eqref{flow eq}--\eqref{initial}. 
\end{itemize}
\end{thm}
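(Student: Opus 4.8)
The plan is to treat the two assertions by opposite constructions: for~(a) I would assume a solution exists and squeeze it from below by an explicit family of sub-solutions whose interior value is forced to $+\infty$; for~(b) I would run Perron's method with explicitly built super- and sub-solutions that blow up at $x=\pm b$. For~(a), suppose a viscosity solution $u:(-b,b)\times[0,\infty)\to\mathbb{R}$ exists and set $c_0:=\inf_{(-b,b)}u_0\in\mathbb{R}$. For $m>0$ let $\kappa_m$ solve the ODE $g(\kappa_m'(x))\kappa_m''(x)=m$ on $[-b,b]$ with $\kappa_m(0)=\kappa_m'(0)=0$. Writing $G(s):=\int_0^s g$ one has $G(\kappa_m'(x))=mx$; since $\alpha\le 1$ forces $\int_0^s g\to\pm\infty$ as $s\to\pm\infty$, the function $G$ is a strictly increasing bijection of $\mathbb{R}$, so $\kappa_m'(x)=G^{-1}(mx)$ is defined on all of $[-b,b]$ and $\kappa_m\in C^2([-b,b])$ is convex with minimum $0$ at $x=0$. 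Using (A2) with $\alpha<1$ (so that $G(s)\sim\frac{C_{g\pm}}{1-\alpha}|s|^{1-\alpha}$ as $s\to\pm\infty$) one estimates
\[ \max_{[-b,b]}\kappa_m=\max\{\kappa_m(b),\kappa_m(-b)\}\le C\,m^{1/(1-\alpha)} \]
for all large $m$, with $C$ independent of $m$. Put $M_m:=\max_{[-b,b]}\kappa_m-c_0$ and
\[ \phi_m(x,t):=\kappa_m(x)-M_m+f(m)\,t . \]
Then $\partial_t\phi_m=f(m)=f(g(\partial_x\phi_m)\,\partial_{xx}\phi_m)$, so $\phi_m$ is a classical, hence viscosity, sub-solution of \eqref{flow eq}; it is bounded on $[-b,b]\times[0,T]$ for each $T$, and $\phi_m(x,0)=\kappa_m(x)-M_m\le c_0\le u_0(x)$. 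Since $u$ is a super-solution with $u(x,t)\to\infty$ as $x\to\pm b$ for $t>0$ while $\phi_m$ stays bounded up to $x=\pm b$, any positive maximum of $\phi_m-u$ over $[-b,b]\times[0,T]$ would be attained at an interior point with $t>0$; as no $u$-, $x$- or $t$-dependence occurs in the coefficients the structure conditions hold trivially, and the comparison argument used for uniqueness (here in its easy, bounded-versus-singular instance) gives $\phi_m\le u$ on $(-b,b)\times[0,\infty)$. Evaluating at $x=0$ and using $f(m)\ge\tfrac12 C_{f+}m^{\beta}$ for large $m$ (from (A3)),
\[ u(0,t)\ \ge\ \phi_m(0,t)\ =\ c_0-M_m+f(m)\,t\ \ge\ c_0-C\,m^{1/(1-\alpha)}+\tfrac12 C_{f+}m^{\beta}\,t . \]
If $\beta>\frac1{1-\alpha}$ the right-hand side tends to $+\infty$ as $m\to\infty$ for every fixed $t>0$; if $\beta=\frac1{1-\alpha}$ it does so for every $t>2C/C_{f+}$. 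Either way $u(0,t)=+\infty$ for some finite $t$, contradicting that $u$ is real-valued, so no solution exists.

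For~(b), note that $1-\beta(1-\alpha)>0$ (automatic if $\alpha=1$, and equivalent to the hypothesis $\beta<\frac1{1-\alpha}$ if $\alpha<1$), so one may set $p:=\frac1{1-\beta(1-\alpha)}\ge 1$. By Perron's method (cf.\ \cite{CIL}) it suffices to construct a sub-solution $\underline u$ and a super-solution $\bar u$ of \eqref{flow eq} with $\underline u\le\bar u$, with $\underline u(\cdot,0)\le u_0\le\bar u(\cdot,0)$, and with $\lim_{x\to\pm b}\underline u(x,t)=\lim_{x\to\pm b}\bar u(x,t)=\infty$ for all $t>0$; then $u:=\sup\{\,w\ \text{sub-solution}:\underline u\le w\le\bar u\,\}$ is a viscosity solution of \eqref{flow eq}, it satisfies \eqref{bdry} (the blow-up inherited from $\underline u$, finiteness in the interior from $\bar u$), and it attains $u_0$ continuously at $t=0$ by the usual barrier argument together with (B1) or (B2). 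Near $x=b$ (and symmetrically near $x=-b$) the barriers are modeled on the self-similar profiles
\[ \bar u(x,t)\ \approx\ (t+t_0)^{p}\,\psi_{\gamma_+^{\ast}}(b-x),\qquad \underline u(x,t)\ \approx\ \varepsilon\,t^{p}\,\psi_{\gamma_+^{\ast\ast}}(b-x)+c_0 , \]
with $\psi$ as in \eqref{def-psi}, a small $\varepsilon>0$, a large $t_0>0$, $c_0=\inf_{(-b,b)}u_0$, and exponents $0<\gamma_+^{\ast\ast}<p\beta(2-\alpha)<\gamma_+^{\ast}$ (with $\gamma_+^{\ast}\ge\gamma$ from \eqref{initial div} when (B2) holds). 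Inserting (A2) and (A3), one checks that $p$ is precisely the exponent that balances the powers of $(t+t_0)$, resp.\ of $t$, in $\partial_t(\cdot)$ versus $f(g(\partial_x\cdot)\,\partial_{xx}\cdot)$, and that placing $\gamma^{\ast\ast}$ and $\gamma^{\ast}$ on opposite sides of $p\beta(2-\alpha)$ yields the correct strict direction of the differential inequalities in the boundary layers, while $\gamma^{\ast\ast}<\gamma^{\ast}$ makes $\bar u$ dominate $\underline u$ there. In the interior region these profiles are glued — $\bar u$ to a super-solution, $\underline u$ as a maximum of sub-solutions — to suitable globally defined comparison functions, for instance the separated solutions $\kappa_m(x)+f(m)\,t+\text{const}$ of \eqref{flow eq} from part~(a) (with $m$ fixed), after which $\underline u\le\bar u$ and the matching of $u_0$ at $t=0$ follow from choosing $t_0$, $\varepsilon$ and the additive constant large enough.

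The hard part will be the barrier construction itself. In~(a) it is the sharp growth bound $\max_{[-b,b]}\kappa_m\le C\,m^{1/(1-\alpha)}$ and the resulting competition between $m^{1/(1-\alpha)}$ and $f(m)\sim C_{f+}m^{\beta}$; in~(b) it is the simultaneous choice of $p$ and of the exponents $\gamma_\pm^{\ast},\gamma_\pm^{\ast\ast}$ (together with $\varepsilon,t_0$) making the one-sided differential inequalities hold both in the thin boundary layers, where (A2)--(A3) govern, and in the interior, while preserving $\underline u\le\bar u$ and the initial matching — in particular gluing a self-similar boundary profile to an interior comparison function without breaking those inequalities. The threshold $\beta=\frac1{1-\alpha}$ enters exactly as the value at which $p=\frac1{1-\beta(1-\alpha)}$ ceases to be finite and positive, which is what separates existence from non-existence.
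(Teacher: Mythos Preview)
Your approach to~(a) is correct and in fact cleaner than the paper's. The paper builds explicit propagating-front sub-solutions $v_L(x,t)=(\frac{3b-x}{2b}-c_Lt)^{-L}$ (extended by $1$ behind the front), verifies the scaled inequality $\partial_t v_L\le f(\tfrac12 g(\partial_x v_L)\,\partial_{xx}v_L)$ so that the convexity-based comparison Theorem~\ref{thm:com-conti2} applies, and then sends $L\to\infty$. Your family $\phi_m=\kappa_m+f(m)\,t-M_m$ consists of \emph{exact} classical solutions of \eqref{flow eq}, is bounded on $[-b,b]$ for every $t$, and therefore falls directly under the basic comparison Theorem~\ref{thm:com1}; the competition between $M_m\sim C\,m^{1/(1-\alpha)}$ and $f(m)\sim C_{f+}m^{\beta}$ is then transparent. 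One difference worth noting: in the borderline case $\beta=\frac{1}{1-\alpha}$ your argument forces $u(0,t)=\infty$ only for $t$ beyond a fixed threshold, whereas the paper's construction gives the stronger ``instantaneous interior blow-up'' $u(x,t)=\infty$ for every $(x,t)\in Q$. For the theorem as stated (non-existence of a global solution) your conclusion suffices.

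For~(b) the self-similar ansatz is a natural first guess, but the sub-solution side has a genuine gap. Your lower barrier $\underline u\approx\varepsilon\,t^{p}(b-x)^{-\gamma^{\ast\ast}}+c_0$ satisfies $\partial_x\underline u\to 0$ as $t\to 0$ at every fixed interior $x$, so the asymptotic $g(s)\sim C_{g+}|s|^{-\alpha}$ from (A2) is unavailable there; in that regime (A1)--(A3) give no lower bound on $f$ near $0$, and the inequality $\partial_t\underline u\le f(g(\partial_x\underline u)\,\partial_{xx}\underline u)$ can fail outright (e.g.\ when $\alpha=1$, so $p=1$, one has $\partial_t\underline u=\varepsilon(b-x)^{-\gamma^{\ast\ast}}>0$ while $\partial_{xx}\underline u\to 0$, hence $f(\cdot)\to 0$, as $t\to 0$). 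Taking a maximum with an interior traveling wave does not repair this unless the boundary profile is confined to a region where $\partial_x\underline u$ is already large, and making that region nonempty uniformly down to $t=0$ is precisely what breaks. The paper sidesteps this by \emph{decoupling} the two roles of the lower barrier: the Perron floor is a bounded traveling wave $w_{\varepsilon,0}-\varepsilon+m_\varepsilon t$ (no boundary blow-up required), while the boundary condition \eqref{bdry} is enforced \emph{a posteriori} by comparison with separate sub-solutions---the stationary convex envelope of $u_0$ under (B2), or the log--log family $\tilde u_k$ of Definition~\ref{defi:sub by KL} (imported from \cite{KL}) under (B1). The paper's super-solution is also structurally different from your self-similar one: rather than a fixed spatial exponent with a $t^{p}$ prefactor, it uses a \emph{time-dependent} spatial exponent $(2-\frac{2x}{b})^{-L(t)}$ with $L$ solving the ODE $L'=C\,L^{\beta(1-\alpha)(1+\mu)-\mu}(L+1)^{\beta}$ (Definition~\ref{definition-super1}, Proposition~\ref{prop:upper-esti-smalla}), glued across $|x|=\frac{2b}{3}$ to an interior circular arc; the condition $1-\beta(1-\alpha)>0$ enters as global solvability of this ODE, playing the same threshold role as your $p=\frac{1}{1-\beta(1-\alpha)}$ being finite.
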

In the proof of (a) in Theorem \ref{thm:non-existence}, we prove that $u(x,t)= \infty$ at any points $x \in (-b,b)$ and any time $t>0$ if a viscosity solution exists, which yields formally that ``instantaneous interior blow-up'' occurs. 
The proof suggests that the divergence of the solution to infinity at the boundary points propagates rapidly to the interior due to the diffusion effects of the interior equation. 

As applications of these theorems, we summarize the existence and non-existence of solutions to the singular Dirichlet boundary problems for the typical examples \eqref{p-heat} and \eqref{cf-eq}. 
First corollary is for \eqref{p-heat}. 

\begin{cor}
Assume $b>0$. 
Assume also $p \ge 2, \beta_1 > 0$ and $\varepsilon > 0$. 
\begin{itemize}
\item If $\beta_1 \ge \frac{1}{p-1}$ and $u_0 \in C((-b, b))$ satisfies $\inf_{x \in (-b, b)} u_0 > -\infty$, then there is no viscosity solution to \eqref{bdry}, \eqref{initial} and \eqref{p-heat}. 
\item If $0 < \beta_1 < \frac{1}{p-1}$ and $u_0$ satisfies (B1) or (B2), then a viscosity solution to \eqref{bdry}, \eqref{initial} and \eqref{p-heat} exists globally in time. 
\end{itemize}
\end{cor}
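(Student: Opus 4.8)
The plan is to deduce this corollary directly from Theorem \ref{thm:non-existence} by checking that the data
\[ f(s) = |s|^{\beta_1 - 1} s, \qquad g(s) = (p-1)|s|^{p-2} + \varepsilon \]
attached to \eqref{p-heat} fall under its hypotheses, and by matching the exponents. No new analysis is required; the work is entirely the bookkeeping of the structure conditions (A1)--(A3).

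First I would recall, as already observed in the introduction, that \eqref{p-heat} is the instance of \eqref{flow eq} with the above $f$ and $g$, and that $g$ satisfies (A2) with $\alpha = 2-p$ (with $C_{g\pm} = p-1 > 0$ when $p > 2$ and $C_{g\pm} = 1+\varepsilon > 0$ when $p = 2$). Since $p \ge 2$ we have $\alpha = 2-p \le 0 < 1$, so we are always in the regime $\alpha \le 1$ of Theorem \ref{thm:non-existence}, in fact always in the strict case $\alpha < 1$. Next I would verify that $f(s) = |s|^{\beta_1 - 1} s$ satisfies (A1): it is strictly increasing on $\mathbb{R}$, vanishes at $0$, and tends to $\pm\infty$. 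It also satisfies (A3), because $|s|^{-\beta_1} f(s) = \sgn(s) \to \pm 1$ as $s \to \pm\infty$, so (A3) holds with $\beta = \beta_1$ and $C_{f\pm} = 1$. The key arithmetic is then the threshold identity: with $\alpha = 2-p$ one has $\frac{1}{1-\alpha} = \frac{1}{p-1}$, so the condition $\beta = \beta_1 \ge \frac{1}{1-\alpha}$ of part (a) is exactly $\beta_1 \ge \frac{1}{p-1}$, and $\beta = \beta_1 < \frac{1}{1-\alpha}$ of part (b) is exactly $0 < \beta_1 < \frac{1}{p-1}$. Applying part (a) (valid since $\alpha < 1$) gives the non-existence assertion for every $u_0 \in C((-b,b))$ with $\inf_{(-b,b)} u_0 > -\infty$; applying part (b) gives the global-in-time existence assertion whenever $u_0$ satisfies (B1) or (B2), which is precisely the claimed conclusion (uniqueness is not asserted, so (B3) is not needed).

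Since this is a direct corollary, there is no substantial obstacle; the only point requiring care is to confirm that the admissible range $p \ge 2$ never produces $\alpha = 1$ or $\alpha > 1$ (indeed $\alpha = 2-p \le 0$ throughout), so Theorems \ref{thm:existence} and \ref{thm:existence2} play no role here and the full dichotomy for \eqref{p-heat} is governed by Theorem \ref{thm:non-existence} alone. Everything else is an immediate substitution.
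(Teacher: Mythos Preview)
Your proposal is correct and follows exactly the approach the paper intends: the corollary is presented in the paper as an immediate application of Theorem~\ref{thm:non-existence}, and your argument simply carries out the verification of (A1)--(A3) and the exponent matching $\alpha=2-p$, $\beta=\beta_1$, $\tfrac{1}{1-\alpha}=\tfrac{1}{p-1}$. There is nothing to add.
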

Since \eqref{p-heat} coincides with the heat equation $u_t = (1+\varepsilon)u_{xx}$ if $\beta_1 = 1$ and $p=2$, the corollary shows that the heat equation with singular Dirichlet boundary condition does not have any viscosity solutions. 

For the signed power type curvature flow \eqref{cf-eq} with the singular Dirichlet boundary condition \eqref{bdry}, the existence theory in \cite{CCD} requires that $u_0$ is smooth and strictly convex in $(-b, b)$ and satisfies 
\[ \lim_{x \to \pm b} u_0(x) = \infty. \]
Our existence theory does not need the smoothness, convexity and, when $\beta_2 \le 1$, the unboundedness of $u_0$ although an additional assumption is necessary to obtain the uniqueness. 

\begin{cor}
Assume $b>0$ and $\beta_2 > 0$. 
\begin{itemize}
\item If $\beta_2 > 1$ and $u_0$ satisfies (B2), then a viscosity solution to \eqref{bdry}, \eqref{initial} and \eqref{cf-eq} exists globally in time. 
Furthermore, if $u_0$ satisfies (B3) with $\gamma_\pm \ge 0$, then the solution is continuous and unique. 
\item If $0 < \beta_2 \le 1$ and $u_0$ satisfies (B1) or (B2), then a viscosity solution to \eqref{bdry}, \eqref{initial} and \eqref{cf-eq} exists globally in time. 
Furthermore, if $\frac{1}{2} < \beta_2 \le 1$ and $u_0$ satisfies (B3) with $\gamma_\pm \ge \frac{1-\beta_1}{2\beta_2 - 1}$, then the solution is continuous and unique. 
\end{itemize}
\end{cor}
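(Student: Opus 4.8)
The plan is to obtain the corollary by specializing Theorems \ref{thm:existence}, \ref{thm:existence2} and \ref{thm:non-existence} to the data
\[ f(s) = |s|^{\beta_2-1}s, \qquad g(s) = (1+s^2)^{\frac{1-3\beta_2}{2\beta_2}}, \]
so the whole argument is a verification of hypotheses together with bookkeeping of exponents. First I would check the structural assumptions. The function $f$ is odd, strictly increasing on $\mathbb{R}$ (since $\beta_2>0$), satisfies $f(0)=0$ and $f(s)\to\pm\infty$ as $s\to\pm\infty$, so (A1) holds; moreover $f(s)=\sgn(s)|s|^{\beta_2}$ gives (A3) with $\beta=\beta_2$ and $C_{f\pm}=1$. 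The function $g$ is continuous and positive, and since $(1+s^2)^{\frac{1-3\beta_2}{2\beta_2}}\sim |s|^{\frac1{\beta_2}-3}$ as $|s|\to\infty$, one gets $|s|^{\alpha}g(s)\to 1$ with $\alpha=3-\frac1{\beta_2}$, which is (A2) with $C_{g\pm}=1$.

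With $\alpha=3-\frac1{\beta_2}$ fixed, I would translate the parameter ranges: $\alpha>2\iff\beta_2>1$; $1<\alpha\le 2\iff \frac12<\beta_2\le 1$; $\alpha=1\iff\beta_2=\frac12$; and $\alpha<1\iff 0<\beta_2<\frac12$. For the first bullet, $\beta_2>1$ puts us in the regime $\alpha>2$, and Theorem \ref{thm:existence} applies verbatim: existence under (B2), and continuity and uniqueness under (B3) with $\gamma_\pm\ge 0$. For the second bullet I would split the range. When $\frac12<\beta_2\le 1$ we have $1<\alpha\le 2$, so Theorem \ref{thm:existence2} gives existence under (B1) or (B2), and its uniqueness threshold $\gamma_\pm\ge\frac{2-\alpha}{\alpha-1}$ becomes $\gamma_\pm\ge\frac{1-\beta_2}{2\beta_2-1}$ after inserting $2-\alpha=\frac{1-\beta_2}{\beta_2}$ and $\alpha-1=\frac{2\beta_2-1}{\beta_2}$, reproducing the threshold in the statement. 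When $0<\beta_2\le\frac12$ we have $\alpha\le 1$, and I would invoke Theorem \ref{thm:non-existence}(b): for $\beta_2=\frac12$ this is the case $\alpha=1$, while for $0<\beta_2<\frac12$ a one-line computation gives $\beta=\beta_2<\frac1{1-\alpha}=\frac{\beta_2}{1-2\beta_2}$ (equivalently $1-2\beta_2<1$), so part (b) yields existence under (B1) or (B2) throughout $0<\beta_2\le\frac12$; combining with the sub-case $\frac12<\beta_2\le 1$ covers the full range $0<\beta_2\le 1$.

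There is no genuine analytic obstacle, since the content is already packaged in the three theorems; the care points are organizational. First, the boundary value $\beta_2=\frac12$ must be handled via the $\alpha=1$ branch of Theorem \ref{thm:non-existence}(b), not the $\alpha<1$ branch (where $\frac1{1-\alpha}$ is undefined). Second, one should record that the non-existence hypothesis $\beta\ge\frac1{1-\alpha}$ of Theorem \ref{thm:non-existence}(a) is never met on $0<\beta_2\le 1$, so no non-existence phenomenon intrudes on the second bullet's regime. Third, uniqueness is asserted only for $\frac12<\beta_2\le 1$ and not for $0<\beta_2\le\frac12$, consistently with the fact that Theorem \ref{thm:non-existence}(b) supplies existence but no comparison principle; as in Theorems \ref{thm:existence}--\ref{thm:existence2}, the stronger assumption (B3) is precisely what makes the divergence-rate preservation, and hence the comparison argument, available.
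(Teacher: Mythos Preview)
Your proposal is correct and is exactly the approach the paper takes: the corollary is stated without proof as a direct specialization of Theorems \ref{thm:existence}, \ref{thm:existence2} and \ref{thm:non-existence}(b) via the identifications $f(s)=|s|^{\beta_2-1}s$, $g(s)=(1+s^2)^{\frac{1-3\beta_2}{2\beta_2}}$ and $\alpha=3-\tfrac1{\beta_2}$ given in the introduction. Your exponent bookkeeping (in particular $\tfrac{2-\alpha}{\alpha-1}=\tfrac{1-\beta_2}{2\beta_2-1}$ and the verification $\beta_2<\tfrac{\beta_2}{1-2\beta_2}$ for $0<\beta_2<\tfrac12$) and your care with the boundary case $\beta_2=\tfrac12$ are all on target.
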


We now refer to another previous studies related to our problem. 
As mentioned earlier, the singular Dirichlet boundary condition depicts the asymptoticness of the ends of complete curve and surface respectively to parallel two lines or a cylinder in geometric flow of graphs. 
We refer to \cite{ChDa, CDKL, WaWo} in addition to \cite{CCD} for the studies on geometric flows of complete curves/surfaces with this kind of condition of ends. 
In particular, Choi-Daskalopoulos-Kim-Lee \cite{CDKL} showed the global-in-time existence of graph solution to the initial value problem for a power type Gaussian curvature flow, which is a high dimensional problem of \eqref{cf-eq} with \eqref{bdry}. 
These results will provide useful insight into the existence and non-existence thresholds for solutions to a higher dimensional version of our problem. 

Since our problem has traveling wave solutions when $\alpha > 1$, we may expect that our solutions constructed above converges to the traveling wave solutions as $t \to \infty$ in this case. 
Related to the convergence results to traveling wave solutions, we also refer to \cite{AlWu2, AlWu} for the contact angle problem in the context of capillary curves/surfaces, \cite{BPT, Th} for the Dirichlet problem for viscous Hamilton-Jacobi equations and \cite{CeNo} for the mean curvature flow with periodic forcing. 
In particular, Barles-Porretta-Tchamba \cite{BPT} and Cesaroni-Novaga \cite{CeNo} studied the convergence of solution starting from a bounded initial data to unbounded traveling wave solution. 
In our problem, long time behavior analysis, including the case $\alpha \le 1$ where the asymptotic behavior of solutions is nontrivial, should be a future work. 

As non-existence theory for solutions to semi-linear parabolic equations, we also refer to \cite{FuIo, IRT} for problems in whole space and \cite{BrCa, IRT, LRSV, We} for zero Dirichlet boundary value problems. 
For another example, the non-existence theory has recently been discussed for semi-linear parabolic equations on manifolds (cf.\ \cite{TaYa, WeZh}) and for fractional parabolic equations (cf.\ \cite{FHIL, Su}). 
We prove the non-existence theory by using the propagation into the interior of the divergence of solutions at boundary points due to the diffusion effect of the interior equation, whereas the above results discuss the non-existence of solutions depending on the singularity of initial functions.

The rest of the paper is organized in the following way. 
We first introduce the definition of viscosity solutions of the singular Dirichlet boundary problems in Section \ref{sec:viscosity}. 
Section \ref{sec:comparison} is devoted to the proof of the comparison principle. 
In Section \ref{sec:exist}, via heuristically arguments to construct super- and sub-solutions to prove the main results, we give rigorous proofs of Theorem \ref{thm:existence}--\ref{thm:non-existence}. 

\subsection*{Acknowledgments} 
The author is grateful to Prof.\ Qing Liu (Okinawa Institute of Science and Technology in Japan) and Prof.\ Hiroyoshi Mitake (The University of Tokyo in Japan) for many helpful discussions. 
The work of the author is supported by JSPS Grants-in-Aid for Scientific Research, No.\ JP23K12992, JP23K20802 and JP23H00085. 

\section{Definition of viscosity solutions}\label{sec:viscosity} 

We need to adopt the viscosity solution theory \cite{CIL, Gbook} to handle \eqref{flow eq}, which is in general fully nonlinear and degenerate parabolic. 
For convenience of notation, hereafter we denote
\[ Q:= (-b, b) \times (0,\infty), \quad Q_0:= (-b, b) \times [0,\infty), \quad Q_0^T := (-b,b) \times [0,T) \]
for any $T>0$ and let
\begin{equation}\label{def F} 
F(p,z) := -f(g(p)z) 
\end{equation}
for $p, z \in \R$. 
We also denote by, respectively, $u^*$ and $u_*$ the upper and lower semicontinuous envelope on $\ol{Q}$ of a function $u$ defined on $Q$, that is, 
\begin{align}
&u^*(x,t) := \lim_{r \to +0} \sup\{u(y,s) : (y,s) \in Q_0 \cap B_r((x,t))\} \quad \text{for} \; \; (x,t) \in \ol{Q}, \label{envelope} \\
&u_*(x,t) := \lim_{r \to +0} \inf\{u(y,s) : (y,s) \in Q_0 \cap B_r((x,t))\} \quad \text{for} \; \; (x,t) \in \ol{Q}, \label{envelope2} 
\end{align}
where $B_r((x,t))$ is the ball with center $(x,t)$ and radius $r$. 

\begin{defi}\label{def singular}
An upper semicontinuous function $u$ in $Q_0$ is called a sub-solution of \eqref{flow eq} and \eqref{bdry} if whenever there exist $(x_0, t_0)\in Q$ and $\phi\in C^2(Q)$ such that $u-\phi$ attains a local maximum at $(x_0, t_0)$, we have 
\[
\phi_t(x_0, t_0)+F\left(\phi_x(x_0, t_0), \phi_{xx}(x_0, t_0)\right)\leq 0.
\]
A lower semicontinuous function $u$ in $Q_0$ is called a super-solution of \eqref{flow eq} and \eqref{bdry} if the following conditions hold:
\begin{enumerate}
\item[(i)] Whenever there exist $(x_0, t_0)\in Q$ and $\phi\in C^2(Q)$ such that $u-\phi$ attains a local minimum at $(x_0, t_0)$, we have 
\[
\phi_t(x_0, t_0)+F\left(\phi_x(x_0, t_0), \phi_{xx}(x_0, t_0)\right)\geq 0.
\]
\item[(ii)] For any $t>0$, it holds that $\lim_{x \to \pm b} u(x,t) = \infty$. 
\end{enumerate}
A function $u$ defined in $Q_0$ is said to be a solution of \eqref{flow eq}--\eqref{initial} if $u^*$ is a sub-solution, $u_*$ is a super-solution of \eqref{flow eq} and \eqref{bdry}, and it holds that $u^*(\cdot, 0) = u_*(\cdot, 0) = u_0$.
\end{defi}

\section{Comparison principle}\label{sec:comparison}

In this section we provide several comparison results. 
As we discuss the difficulty to establish the comparison theory in the introduction, we study the comparison results step by step. 
We first prove the comparison theory between unbounded super-solution and bounded sub-solution. 

\begin{thm}\label{thm:com1}
Let $b>0$. 
Assume that $f, g\in C(\R)$ satisfy (A1) and (A2). 
Let $u$ and $v$ be respectively a sub- and super-solution of \eqref{flow eq} and \eqref{bdry}. 
Assume $u(\cdot, t)$ is bounded from above at any time $t \ge 0$. 
If $u(\cdot, 0) \le v(\cdot, 0)$ in $(-b, b)$, then $u \le v$ in $Q_0$. 
\end{thm}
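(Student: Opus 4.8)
The plan is to run the standard doubling-of-variables argument of the theory of viscosity solutions, but with care taken at the lateral boundary $x = \pm b$, where $v$ diverges to $\infty$ while $u$ stays bounded above; this is exactly what makes the comparison tractable in spite of the singular Dirichlet condition. First I would argue by contradiction: suppose $\sup_{Q_0}(u - v) =: \theta > 0$. Since $u$ is bounded above on each time slice and $v(\cdot, t) \to \infty$ as $x \to \pm b$, for each fixed $T$ the supremum of $u - v$ over $\overline{Q_0^T}$ is attained in a \emph{compact} subset of $(-b,b) \times [0,T]$, bounded away from the lateral boundary; this confinement is the key structural point and replaces the usual use of compactness of the spatial domain. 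As usual I would also replace $u$ by $u - \eta/(T - t)$ (or add $\eta t$ to $v$) for small $\eta > 0$ to force a strict time-derivative gap and to push the maximum away from $t = T$; letting $\eta \to 0$ at the end recovers the claim on all of $Q_0$ since $T$ is arbitrary.

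Next I would introduce the penalization $\Phi_\varepsilon(x,y,t) := u(x,t) - v(y,t) - |x - y|^2/(2\varepsilon)$ on $(-b,b)^2 \times [0,T]$ and let $(x_\varepsilon, y_\varepsilon, t_\varepsilon)$ be a maximum point. Standard estimates give $|x_\varepsilon - y_\varepsilon|^2/\varepsilon \to 0$ and, for $\varepsilon$ small, both $x_\varepsilon$ and $y_\varepsilon$ lie in a fixed compact subset of $(-b,b)$ because of the boundary divergence of $v$ recorded above — so the test point never escapes to the singular boundary. If $t_\varepsilon = 0$ for a sequence $\varepsilon \to 0$, the initial inequality $u(\cdot,0) \le v(\cdot,0)$ yields a contradiction with $\theta > 0$; otherwise $t_\varepsilon \in (0,T)$ and $(x_\varepsilon, y_\varepsilon) \in (-b,b)^2$, so I can apply the Crandall--Ishii lemma to produce $a \in \mathbb{R}$ and $X, Y \in \mathbb{R}$ with $a + F(p_\varepsilon, X) \le 0$, $a + F(p_\varepsilon, Y) \ge \eta/(T-t_\varepsilon)^2$ (the strict gap from the perturbation), where $p_\varepsilon = (x_\varepsilon - y_\varepsilon)/\varepsilon$, together with the matrix inequality $X \le Y$. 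Subtracting and using that $F(p,z) = -f(g(p)z)$ is \emph{nonincreasing} in $z$ — this is where (A1) (strict monotonicity of $f$) and (A2) (positivity of $g$) enter — gives $F(p_\varepsilon, Y) \le F(p_\varepsilon, X)$, hence $\eta/(T-t_\varepsilon)^2 \le 0$, a contradiction. Note that no regularity or growth hypothesis on $F$ in the $p$-variable is needed here because the gradient is the \emph{same} $p_\varepsilon$ on both sides, a simplification special to the one-dimensional first-order-in-$p$ structure.

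The main obstacle I anticipate is not the interior viscosity machinery, which is routine, but rather making the boundary-confinement argument airtight: one must show, quantitatively and uniformly in $\varepsilon$ and in $t \in [0,T]$, that the maximizing points cannot approach $x = \pm b$. The cleanest way is to first establish that $\sup_{\overline{Q_0^T}}(u - v)$ is finite and attained on a compact set — using that $u$ is upper semicontinuous and bounded above while $v$ is lower semicontinuous with $\liminf_{x \to \pm b} v(x,t) = +\infty$ for every $t$ (condition (ii) in the definition of super-solution) — and then to note that the penalized functional $\Phi_\varepsilon$ is bounded below near the lateral boundary away from $-\infty$ only if $x,y$ stay in a compact set, because there $u - v \to -\infty$ uniformly on $[0,T]$ (a Dini-type argument on the lower semicontinuous, boundary-divergent $v$). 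A secondary technical point is the behavior as $t \to T$: the perturbation $-\eta/(T-t)$ makes $u - \eta/(T-t) \to -\infty$ as $t \uparrow T$, so the supremum over $\overline{Q_0^T}$ is genuinely attained with $t_\varepsilon$ bounded away from $T$, closing that end of the cylinder as well. Once both the lateral and the terminal ends are controlled, the contradiction above completes the proof, and letting $\eta \to 0$ and then $T \to \infty$ gives $u \le v$ on all of $Q_0$.
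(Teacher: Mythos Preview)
Your proposal is correct and follows essentially the same route as the paper's proof: contradiction, a $\frac{1}{T-t}$-type time penalization, doubling of the space variable with a quadratic penalty, use of the boundary divergence $v(\cdot,t)\to\infty$ together with the upper-boundedness of $u(\cdot,t)$ to confine the maximizers away from $x=\pm b$, the Crandall--Ishii lemma yielding the common gradient $p_\varepsilon$ and the ordering $X\le Y$, and finally the degenerate ellipticity of $F(p,z)=-f(g(p)z)$ (from (A1) and $g>0$ in (A2)) to produce the contradiction. The only cosmetic differences are that the paper fixes the penalization constant (effectively $\eta=1$) and phrases the final step via $f^{-1}$ rather than via monotonicity of $F$, and that the paper extends $u$ to $[-b,b]$ by its upper semicontinuous envelope while you work on $(-b,b)^2$ directly; neither changes the substance.
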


\begin{proof}
Assume by contradiction that $u(\bar{x}, \bar{t}) - v(\bar{x}, \bar{t}) > 0$ at some $(\bar{x}, \bar{t}) \in Q$. 
Then, there exists $T > 0$ large such that 
\begin{equation}\label{positive-com1} 
u(\bar{x},\bar{t}) - v(\bar{x},\bar{t}) - \frac{1}{T-\bar{t}} > 0. 
\end{equation}
Let $u^*$ be the upper semicontinuous envelope of $u$ defined by \eqref{envelope}. 
Notice that $u^* = u$ in $Q_0$ and $u^*(\cdot, t)$ is bounded from above due to the upper semicontinuity and the boundedness assumption of $u$. 
For any $\varepsilon > 0$, let 
\[ \Phi_\varepsilon(x,y,t) := u^*(x,t) - v(y,t) - \frac{1}{T-t} - \frac{|x-y|^2}{\varepsilon}\]
for $(x,y,t) \in [-b,b] \times (-b,b) \times [0,T)$. 
From \eqref{positive-com1} and $\lim_{x \to \pm b} v(x,t) = \infty$, for any $\varepsilon > 0$, there exists $(x_\varepsilon, y_\varepsilon, t_\varepsilon) \in [-b,b] \times (-b,b) \times [0,T)$ such that 
\begin{equation}\label{positive-Phi1} 
\sup_{[-b,b] \times (-b,b) \times [0,T)} \Phi_\varepsilon(x,y,t) = \Phi_\varepsilon(x_\varepsilon, y_\varepsilon, t_\varepsilon) \ge u(\bar{x},\bar{t}) - v(\bar{x},\bar{t}) - \frac{1}{T-\bar{t}} > 0. 
\end{equation}
The relation \eqref{positive-Phi1} also enables us to deduce that 
\[ \frac{|x_\varepsilon - y_\varepsilon|^2}{\varepsilon} \le u^*(x_\varepsilon, t_\varepsilon) - u(\bar{x}, \bar{t}) + v(\bar{x}, \bar{t}) - v(y_\varepsilon, t_\varepsilon) -\frac{1}{T-\bar{t}}, \]
which implies the existence of $(x_0, t_0) \in \overline{Q}$ such that along a subsequence 
\[ x_\varepsilon, y_\varepsilon \to x_0, \quad t_\varepsilon \to t_0\]
as $\varepsilon \to 0$. 
Hereafter we still index the converging subsequence by $\varepsilon$ for convenience of notion. 
Using \eqref{positive-Phi1} and $\lim_{x \to \infty} v(x,t) = \infty$ again, we obtain $x_0 \neq \pm b$. 
Due to $u(\cdot, 0) \le v(\cdot, 0)$, we have also $t_0 > 0$. 
Therefore, $(x_\varepsilon, y_\varepsilon, t_\varepsilon) \in (-b, b) \times (-b, b) \times (0,T)$ for $\varepsilon > 0$ small and we fix such $\varepsilon > 0$. 

We next apply the Crandall-Ishii lemma (cf.\ \cite[Theorem 8.3]{CIL}) to find $(\tau_1, p_1, z_1) \in \ol{P}^{2, +} u(x_\varepsilon, t_\varepsilon)$ and $(\tau_2, p_2, z_2) \in \ol{P}^{2,-} v(y_\varepsilon, t_\varepsilon)$ for any $\sigma > 0$ small such that 
\begin{equation}\label{com-derivatives1} 
\tau_1 - \tau_2 = \frac{1}{(T - t_\varepsilon)^2} \ge \frac{1}{T^2}, \quad p_1 = p_2 = \frac{2(x_\varepsilon - y_\varepsilon)}{\varepsilon}, 
\end{equation}
and 
\begin{equation}\label{com-second1} 
\begin{pmatrix}
z_1 & 0\\
0 & - z_2
\end{pmatrix}
\leq
\dfrac{2}{\vep}\begin{pmatrix} 1 & -1 \\ -1 & 1 \end{pmatrix}+{4\sigma\over \vep^2}\begin{pmatrix} 1 & -1 \\ -1 & 1 \end{pmatrix}^2.
\end{equation}
Multiplying both sides of the last inequality \eqref{com-second1} by the vector $(1,1)$ from left and from right, we deduce that $z_1 \le z_2$. 
We then adopt the definition of viscosity sub- and super-solutions for $u$ and $v$ respectively to deduce that 
\[ \tau_1 + F(p_1, z_1) \le 0, \quad \tau_2 + F(p_2, z_2) \ge 0, \]
which yields by the assumption (A1) and \eqref{com-derivatives1}
\[ 0 < f^{-1}(\tau_1) - f^{-1}(\tau_2) \le g(p_1) z_1 - g(p_2) z_2 \le 0. \]
Therefore, we have a contradiction. 
\end{proof}

We next provide the comparison theory when a sub-solution is not bounded. 
We compare between super- or sub-solution to \eqref{scale eq1} or \eqref{scale eq2} with \eqref{bdry} and sub- or super-solution to \eqref{flow eq} and \eqref{bdry}, respectively. 

\begin{thm}\label{thm:com-conti}
Let $b>0$. 
Assume that $f, g \in C(\mathbb{R})$ satisfy (A1) and (A2). 
Let $u$ and $v$ be respectively sub- and super-solution of \eqref{flow eq} and \eqref{bdry}. 
Assume $u$ satisfies $\lim_{x \to \pm b} u(x, t) = \infty$ for $t \ge 0$. 
Assume also either of the following conditions: 
\begin{enumerate}
\item[(a)] $v$ is continuous on $Q_0$; there exists $\delta > 0$ such that $v$ satisfies 
\[ v_t \ge \max\{f((1+\delta)g(v_x) v_{xx}), 0\} \quad \text{for} \; \; (x,t) \in (-b, b) \times (0, \infty) \]
in the viscosity sense; and there exists $b_0 \in (0, b)$ such that $v_{xx}(\cdot, 0) \ge 0$ in $(-b, -b_0) \cup (b_0, b)$ in the viscosity sense. 
\item[(b)] $u$ is continuous on $Q_0$; $u$ satisfies 
\[ u_t \le \min\{ f(g(u_x) u_{xx}), 0\} \quad \text{for} \; \; (x,t) \in (-b, b) \times (0, \infty) \]
in the viscosity sense; and there exists $b_0 \in (0, b)$ such that $u_{xx}(\cdot, 0) \ge 0$ in $(-b, -b_0) \cup (b_0, b)$ in the viscosity sense. 
\end{enumerate}
If $u(\cdot, 0) \le v(\cdot, 0)$ in $(-b, b)$, then $u \le v$ in $Q_0$. 
\end{thm}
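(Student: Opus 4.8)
The plan is to deduce Theorem~\ref{thm:com-conti} from the bounded/unbounded comparison Theorem~\ref{thm:com1} by a \emph{spatial rescaling} of the ``structured'' function: in case (a) we rescale $v$ so that its blow-up locus is pushed strictly inside $(-b,b)$, in case (b) we rescale $u$ so that its blow-up locus is pushed strictly outside $(-b,b)$, and in either case the rescaled function can then be compared against the other function by Theorem~\ref{thm:com1} on a suitable interval. The one-sided scaled inequalities \eqref{scale eq1}, \eqref{scale eq2} and the convexity of the initial datum near the end-points are precisely what makes the rescaled function a genuine super-/sub-solution of \eqref{flow eq} and keeps its trace at $t=0$ on the correct side.

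\emph{Case (a).} For $\lambda\in(0,1)$ put $v^\lambda(x,t):=v(x/\lambda,t)$ on $(-\lambda b,\lambda b)\times[0,\infty)$; it is continuous, and $\lim_{x\to\pm\lambda b}v^\lambda(x,t)=\infty$ for $t\ge0$ since $\lim_{y\to\pm b}v(y,t)\ge\lim_{y\to\pm b}u(y,t)=\infty$. I claim $v^\lambda$ is a viscosity super-solution of \eqref{flow eq} on $(-\lambda b,\lambda b)\times(0,\infty)$ once $\lambda$ is close enough to $1$. Indeed, if $v^\lambda-\phi$ has a local minimum at $(x_0,t_0)$ then $v-\widetilde\phi$ has one at $(x_0/\lambda,t_0)$, where $\widetilde\phi(y,t):=\phi(\lambda y,t)$, so \eqref{scale eq1} gives, with $p:=\phi_x(x_0,t_0)$ and $z:=\phi_{xx}(x_0,t_0)$,
\[
\phi_t(x_0,t_0)\ \ge\ \max\bigl\{\,f\bigl((1+\delta)\,g(\lambda p)\,\lambda^2 z\bigr),\ 0\,\bigr\}.
\]
If $z\le0$ the right-hand side is $\ge0\ge f(g(p)z)$, hence $\phi_t(x_0,t_0)+F(p,z)\ge0$; this is exactly where the ``$\max$ with $0$'' is needed, as a naive scaling goes the wrong way on the sign-degenerate regime $g(p)z\le0$. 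If $z>0$ it suffices that $(1+\delta)\lambda^2 g(\lambda p)\ge g(p)$ for all $p$; writing $h(s):=|s|^{\alpha}g(s)$, this reads $\lambda^{2-\alpha}\,h(p)/h(\lambda p)\le1+\delta$, and since by \eqref{exponent alpha} $h$ is continuous, bounded between positive constants and tends to $C_{g\pm}$ at $\pm\infty$, one gets $\sup_p h(p)/h(\lambda p)\to1$ as $\lambda\to1$, so the $(1+\delta)$-slack absorbs the $O(|\lambda-1|)$ error. Next put $C_\lambda:=\sup_{(-\lambda b,\lambda b)}\bigl(u(\cdot,0)-v^\lambda(\cdot,0)\bigr)^{+}$. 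As $v(\cdot,0)\ge u(\cdot,0)$ diverges at $\pm b$ and is convex near each end-point, $v(\cdot,0)$ is monotone near $\pm b$, so $v^\lambda(\cdot,0)\ge v(\cdot,0)\ge u(\cdot,0)$ close to $\pm\lambda b$; with the uniform convergence $v^\lambda(\cdot,0)\to v(\cdot,0)$ on compact subsets of $(-b,b)$ this gives $0\le C_\lambda<\infty$ and $C_\lambda\to0$ as $\lambda\to1$. Then $v^\lambda+C_\lambda$ is a super-solution of \eqref{flow eq} on $(-\lambda b,\lambda b)\times(0,\infty)$ with the correct boundary blow-up and $(v^\lambda+C_\lambda)(\cdot,0)\ge u(\cdot,0)$, while $u$ restricted to $(-\lambda b,\lambda b)\times[0,\infty)$ is a sub-solution bounded from above at each time (it is upper semicontinuous and $[-\lambda b,\lambda b]$ is a compact subset of $(-b,b)$). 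Applying Theorem~\ref{thm:com1} with $(-\lambda b,\lambda b)$ in place of $(-b,b)$ (its proof is unchanged) yields $u(x,t)\le v(x/\lambda,t)+C_\lambda$ there; fixing $(x,t)\in Q_0$ and letting $\lambda\to1^{-}$, continuity of $v$ at $(x,t)$ and $C_\lambda\to0$ give $u(x,t)\le v(x,t)$.

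\emph{Case (b)} is the mirror image. For $\mu>1$ put $u^\mu(x,t):=u(x/\mu,t)$ on $(-\mu b,\mu b)\times[0,\infty)\supset(-b,b)\times[0,\infty)$; it is continuous and bounded from above on $[-b,b]$ at each time (as $[-b/\mu,b/\mu]$ is a compact subset of $(-b,b)$ and $u$ is continuous there). Using \eqref{scale eq2} in place of \eqref{scale eq1} (its ``$\min$ with $0$'' disposing of the regime $g(p)z\ge0$, and the analogue of the above rescaling bound for $g$ handling $g(p)z<0$), one checks that $u^\mu$ is a sub-solution of \eqref{flow eq} on $(-b,b)\times(0,\infty)$ for $\mu$ near $1$. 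The convexity of $u(\cdot,0)$ near $\pm b$ again forces monotonicity there, so there is $C_\mu\ge0$ with $C_\mu\to0$ and $u^\mu(\cdot,0)-C_\mu\le v(\cdot,0)$ on $(-b,b)$. Then $u^\mu-C_\mu$ is a bounded-above sub-solution and $v$ the unbounded super-solution, so Theorem~\ref{thm:com1} gives $u(x/\mu,t)-C_\mu\le v(x,t)$ on $(-b,b)\times[0,\infty)$; letting $\mu\to1^{+}$ and using continuity of $u$ gives $u\le v$ on $Q_0$.

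The main difficulty is the verification that the rescaled function obeys the right inequality, i.e.\ the uniform-in-gradient rescaling estimate for $g$: one must control $g(\lambda p)/g(p)$ (resp.\ $g(p/\mu)/g(p)$) for \emph{all} $p\in\mathbb R$, not just along sequences $p\to\pm\infty$; the key point is that $\sup_p h(p)/h(\lambda p)\to1$ as $\lambda\to1$ follows from $h=|\cdot|^\alpha g$ being continuous, bounded between positive constants, and convergent at $\pm\infty$. The precise shape of \eqref{scale eq1}--\eqref{scale eq2} is tailored to this: the ``$\max$/$\min$ with $0$'' removes the sign-degenerate direction of the fully nonlinear operator, and the factor $1+\delta$ (resp.\ the strict inequality $\mu>1$) eats the $\lambda^{2-\alpha}h(p)/h(\lambda p)$ error. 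A secondary point, and the only place the one-sided convexity assumption is used, is to see that $C_\lambda,C_\mu\to0$ as the rescaling parameter tends to $1$; this relies on the monotonicity of the initial datum near the end-points and on the continuity of $v$ (resp.\ $u$) up to $t=0$.
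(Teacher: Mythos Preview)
The overall strategy is right—reduce to Theorem~\ref{thm:com1} by a rescaling that pushes the blow-up off the interval of comparison—and your Case~(a) argument goes through, modulo one fixable slip: the function $h(s)=|s|^\alpha g(s)$ is \emph{not} bounded between positive constants (it vanishes or blows up at $s=0$ unless $\alpha=0$). What you actually need is $\sup_p g(p)/g(\lambda p)\to1$ as $\lambda\to1$, and that does follow from (A2) by splitting into $|p|$ large (where the asymptotics give $g(p)/g(\lambda p)\to\lambda^\alpha$) and $|p|$ bounded (where uniform continuity and positivity of $g$ suffice). Also, your justification of the boundary blow-up for $v^\lambda$ via ``$\lim v\ge\lim u$'' is circular; simply invoke condition~(ii) in Definition~\ref{def singular} for $v$.

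Case~(b), however, has a genuine gap. The hypothesis in~(b) carries \emph{no} $(1+\delta)$ slack: it only says $u_t\le\min\{f(g(u_x)u_{xx}),0\}$. Your rescaling $u^\mu(x,t)=u(x/\mu,t)$ with $\mu>1$ then requires, in the regime $z<0$, that $\mu^2 g(\mu p)\ge g(p)$ for \emph{all} $p$, and this need not hold. For instance, if $\alpha>2$ then for large $|p|$ one has $g(p)/g(\mu p)\sim\mu^\alpha>\mu^2$, so the inequality fails; and even for $\alpha\le2$ one can choose an admissible $g$ (e.g.\ with a sharp bump on a compact set) for which $g(p_0)/g(\mu p_0)>\mu^2$ at some $p_0$. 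The ``analogue of the above rescaling bound for $g$'' you appeal to simply does not exist here, precisely because there is no $\delta$-buffer to absorb the error.

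The paper's proof avoids this entirely by using the rescalings $v_\lambda(x,t)=\frac{1}{1+\lambda}\,v((1+\lambda)x,(1+\lambda)t)$ in~(a) and $u_\lambda(x,t)=(1+\lambda)\,u\bigl(\frac{x}{1+\lambda},\frac{t}{1+\lambda}\bigr)$ in~(b). The multiplicative prefactor makes $(v_\lambda)_x=v_x$ and $(u_\lambda)_x=u_x$ (at the scaled points), so $g((u_\lambda)_x)(u_\lambda)_{xx}=\frac{1}{1+\lambda}\,g(u_x)u_{xx}$ \emph{exactly}. Then from $u_t\le\min\{f(g(u_x)u_{xx}),0\}$ one gets $(u_\lambda)_t\le\min\{f((1+\lambda)G),0\}$ with $G=g((u_\lambda)_x)(u_\lambda)_{xx}$; when $G<0$ one has $(1+\lambda)G<G$ and hence $f((1+\lambda)G)<f(G)$, while $G\ge0$ is handled by the $\min$ with $0$. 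No estimate on $g(\lambda p)/g(p)$ is ever needed, and (A2) plays no role in this step. Your pure spatial rescaling can be salvaged in~(b) only if you also scale the value of $u$ so as to keep the gradient fixed—which is exactly what the paper does.
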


\begin{proof}
We first prove the theorem in the case that (a) holds. 
For $\lambda > 0$, let 
\[ v_\lambda (x,t) := \frac{1}{1+\lambda} v((1+\lambda) x, (1+\lambda) t) \quad \text{for} \; \; -\frac{b}{1+\lambda} < x < \frac{b}{1+\lambda}, \; \; t \ge 0. \]
Then, for $0 < \lambda < \delta$, we have by a simple calculation 
\begin{align*} 
(v_\lambda)_t \ge&\; \max\left\{f\left(\frac{1+\delta}{1+\lambda} g((v_\lambda)_x) (v_\lambda)_{xx}\right), 0\right\} \\
\ge&\; f(g((v_\lambda)_x) (v_\lambda)_{xx}) \quad \text{in} \; \; \left(-\frac{b}{1+\lambda}, \frac{b}{1+\lambda}\right) \times (0, \infty) 
\end{align*}
in the viscosity sense. 
Furthermore, it holds that 
\begin{equation}\label{order-initial-time} 
v(x, 0) \le v_{\lambda}(x, 0) + \omega(\lambda) \quad \text{for} \; \; -\frac{b}{1+\lambda} < x < \frac{b}{1+\lambda}, 
\end{equation}
where $\omega$ is a modulus of continuity. 
Indeed, since $v(\cdot, 0)$ is continuous, it holds that 
\[ |v(x,0) - v_{\lambda}(x,0)| \le \omega(\lambda) \quad \text{for} \; \; -b_0 \le x \le b_0 \]
if $\lambda$ is sufficiently small. 
Due to the convexity assumption in (a), we can see that $v(\cdot, 0)$ is differentiable in $(b_0, b)$ and $v_x(\cdot, 0)$ is increasing in $(b_0, b)$. 
Therefore, for $x \in (b_0, \frac{b}{1+\lambda})$, we have 
\begin{align*} 
v_\lambda(x,0) =&\; \int_{b_0}^x v_x((1+\lambda) \tilde{x}, 0) \; d\tilde{x} + v_\lambda(b_0, 0) \\
\ge&\; \int_{b_0}^x v_x(\tilde{x}, 0) \; d\tilde{x} + v(b_0, 0) - \omega(\lambda) = v(x, 0) - \omega(\lambda). 
\end{align*}
Similarly, we have $v_\lambda(x, 0) + \omega(\lambda) \ge v(x, 0)$ for $x \in (-\frac{b}{1+\lambda}, -b_0)$. 
We thus obtain \eqref{order-initial-time}. 

Since $u(\cdot, t)$ is bounded from above in $(-\frac{b}{1+\lambda}, \frac{b}{1+\lambda})$ for $t \ge 0$, the comparison result in Theorem \ref{thm:com1} yields 
\[ u(x, t) \le v_\lambda(x,t) + \omega(\lambda) \quad \text{for} \; \; (x,t) \in \left(-\frac{b}{1+\lambda}, \frac{b}{1+\lambda}\right) \times [0, \infty) \]
if $u(\cdot, 0) \le v(\cdot, 0)$ in $(-b, b)$. 
Letting $\lambda \to 0$, we have $u \le v$ in $Q_0$ due to the continuity of $v$. 

When (b) holds, we can prove the comparison result similarly by using the scaling $u_\lambda(x,t) := (1+\lambda) u(\frac{x}{1+\lambda}, \frac{t}{1+\lambda})$. 
\end{proof}

A similar scaling argument as in the proof of Theorem \ref{thm:com-conti} works well if $u(\cdot, t)$ is convex for any $t \ge 0$ and $u$ satisfies a condition stronger than the inequality in (b) of Theorem \ref{thm:com-conti}. 
The following comparison result will be used when we prove (a) in Theorem \ref{thm:non-existence}. 

\begin{thm}\label{thm:com-conti2}
Let $b>0$. 
Assume that $f, g \in C(\mathbb{R})$ satisfy (A1) and (A2). 
Let $u$ and $v$ be respectively sub- and super-solution of \eqref{flow eq} and \eqref{bdry}. 
Assume $u(\cdot, t)$ is convex for any $t \ge 0$ and $u$ is a continuous function satisfying  
\[ u_t \le f\left(\frac{1}{1+\delta} g(u_x) u_{xx} \right) \quad \text{for} \; \; (x,t) \in (-b, b) \times (0, \infty) \]
for some $\delta > 0$. 
If $u(\cdot, 0) \le v(\cdot, 0)$ in $(-b, b)$, then $u \le v$ in $Q_0$. 
\end{thm}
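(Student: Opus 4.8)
The plan is to follow the scaling argument used for Theorem~\ref{thm:com-conti}(b), but now to exploit the convexity of $u$ at \emph{every} time, together with the stronger differential inequality, in order to reduce matters to the bounded-sub-solution comparison of Theorem~\ref{thm:com1}. For $0<\lambda<\delta$ set
\[
u_\lambda(x,t):=(1+\lambda)\,u\!\left(\frac{x}{1+\lambda},\frac{t}{1+\lambda}\right),\qquad x\in\left(-(1+\lambda)b,(1+\lambda)b\right),\ t\ge 0,
\]
and regard $u_\lambda$ as a function on $Q_0$ by restriction. Since $u$ is continuous and $[-b/(1+\lambda),b/(1+\lambda)]$ is a compact subset of $(-b,b)$, the slice $u_\lambda(\cdot,t)$ is continuous and bounded from above on $(-b,b)$ for every $t\ge 0$, so $u_\lambda$ is exactly the kind of bounded-from-above sub-solution that Theorem~\ref{thm:com1} treats --- once we check it is a sub-solution.

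First I would verify that $u_\lambda$ is a sub-solution of \eqref{flow eq} on $Q$. If $\phi\in C^2(Q)$ and $u_\lambda-\phi$ has a local maximum at $(x_0,t_0)\in Q$, then $\tilde\phi(y,s):=\frac{1}{1+\lambda}\phi((1+\lambda)y,(1+\lambda)s)$ touches $u$ from above at $(x_0/(1+\lambda),t_0/(1+\lambda))$, and a chain-rule computation together with the assumed inequality for $u$ gives
\[
\phi_t(x_0,t_0)\le f\!\left(\frac{1+\lambda}{1+\delta}\,g(\phi_x(x_0,t_0))\,\phi_{xx}(x_0,t_0)\right).
\]
Since $u_\lambda(\cdot,t_0)$ is convex (because $u(\cdot,t_0/(1+\lambda))$ is) and $x\mapsto u_\lambda(x,t_0)-\phi(x,t_0)$ has a local maximum at $x_0$, we must have $\phi_{xx}(x_0,t_0)\ge 0$: a convex function minus a $C^2$ function can attain a local maximum only where the second derivative of the test function is nonnegative. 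As $g>0$, $\frac{1+\lambda}{1+\delta}<1$, and $f$ is increasing, the right-hand side above is then at most $f(g(\phi_x(x_0,t_0))\,\phi_{xx}(x_0,t_0))$, i.e.\ $\phi_t(x_0,t_0)+F(\phi_x(x_0,t_0),\phi_{xx}(x_0,t_0))\le 0$, which is the sub-solution inequality.

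Next I would compare initial data: writing $h:=u(\cdot,0)$, which is convex, and using $\frac{x}{1+\lambda}=\frac{1}{1+\lambda}x+\frac{\lambda}{1+\lambda}\cdot 0$, convexity gives $u_\lambda(x,0)=(1+\lambda)h(x/(1+\lambda))\le h(x)+\lambda h(0)\le v(x,0)+\lambda|u(0,0)|$ for $x\in(-b,b)$. Since $F$ and the boundary blow-up in \eqref{bdry} are unaffected by additive constants, $v+\lambda|u(0,0)|$ is again a super-solution of \eqref{flow eq} and \eqref{bdry}, and Theorem~\ref{thm:com1} applied to $u_\lambda$ and this super-solution yields $u_\lambda\le v+\lambda|u(0,0)|$ on $Q_0$. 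Letting $\lambda\to 0$ and using the continuity of $u$, so that $u_\lambda(x,t)\to u(x,t)$, we obtain $u\le v$ on $Q_0$.

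The chain-rule bookkeeping for $u_\lambda$ and $\tilde\phi$ and the convex-combination estimate for the initial data are routine. The one step that requires genuine care is the implication ``$u_\lambda(\cdot,t_0)$ convex and $u_\lambda-\phi$ has a space-time local maximum at $(x_0,t_0)$ $\Rightarrow$ $\phi_{xx}(x_0,t_0)\ge 0$'': one must first restrict the space-time maximum to the time slice $t=t_0$ and then invoke the test-function characterization of convexity. This is precisely where the hypothesis that $u(\cdot,t)$ be convex for \emph{all} $t\ge 0$ (and not merely at $t=0$, as in Theorem~\ref{thm:com-conti}(b)) becomes indispensable.
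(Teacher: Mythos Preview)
Your argument is correct and follows essentially the same scaling strategy as the paper's proof: define $u_\lambda(x,t)=(1+\lambda)u(x/(1+\lambda),t/(1+\lambda))$, check that $u_\lambda$ is a bounded-from-above sub-solution of \eqref{flow eq} on $Q_0$, compare initial data, invoke Theorem~\ref{thm:com1}, and let $\lambda\to 0$. Your explicit convex-combination estimate $u_\lambda(x,0)\le u(x,0)+\lambda\,u(0,0)$ is a clean concrete version of the paper's abstract ``modulus of continuity $\omega(\lambda)$'' claim, and your discussion of why convexity at \emph{every} time forces $\phi_{xx}(x_0,t_0)\ge 0$ (and hence why the factor $\tfrac{1+\lambda}{1+\delta}<1$ can be dropped) makes explicit the point that the paper leaves implicit.
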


\begin{proof}
Letting 
\[ u_\lambda (x,t) := (1+\lambda) u\left(\frac{x}{1+\lambda}, \frac{t}{1+\lambda}\right) \quad \text{for} \; \; -b < x < b, \]
we can see that (1) $u_\lambda$ is a sub-solution to \eqref{flow eq} for $0 < \lambda \le \delta$; (2) $u_\lambda(\cdot, t)$ is bounded on $(-b, b)$ for any $t \ge 0$ and $\lambda > 0$; and (3) there is a modulus of continuity $\omega$ such that $u_\lambda(\cdot, 0) \le u(\cdot, 0) + \omega(\lambda)$ whether $u(\cdot, 0)$ is bounded on $(-b, b)$ or $\lim_{x \to \pm b} u(x, 0)=\infty$. 
Therefore, the claim can be proved by a similar argument as in the proof of Theorem \ref{thm:com-conti}. 
\end{proof}

We finally provide the comparison theory between general sub- and super-solution to \eqref{flow eq}--\eqref{bdry} under an assumption corresponding to (B3) with suitable constants $\gamma_\pm$. 
Since Theorem \ref{thm:com-conti} can not be applied to sub- and super-solutions to \eqref{flow eq}--\eqref{bdry} in general, we have to construct another sub- or super-solution to estimate the divergence rate of above sub- and super-solution at the boundary points $x = \pm b$ for $t > 0$. 
Therefore, we introduce functions $h_{\gamma_\pm, D_\pm} \in C^2((-b,b))$ for $\gamma_+, \gamma_- \ge 0$ and $D_+, D_- >0$ satisfying the following condition to construct another sub- and super-solution satisfying the conditions in (a) or (b) of Theorem \ref{thm:com-conti} to estimate the divergence rate: 
\begin{itemize}
\item[(h1)] There exists $b_0 \in (0,b)$ such that 
\[ h_{\gamma_\pm, D_\pm}(x) = \begin{cases} 
D_+\psi_{\gamma_+}(b-x)  & \text{if} \; \; x \in [b_0, b), \\
D_-\psi_{\gamma_-}(b+x)  & \text{if} \; \; x \in (-b, -b_0], 
\end{cases}\] 
where $\psi_{\gamma_\pm}$ is the function defined in \eqref{def-psi}
\end{itemize}

We state the comparison result, which yields the continuity and the uniqueness results in Theorem \ref{thm:existence} and Theorem \ref{thm:existence2}, as follows: 

\begin{thm}\label{thm:com2}
Let $b>0$. 
Assume that $f$ and $g$ satisfy (A1) and (A2) with $\alpha >1$. 
Let $u$ and $v$ be respectively a sub- and a super-solution of \eqref{flow eq} and \eqref{bdry}. 
Assume that there exists constants $\gamma_\pm^u, \gamma_\pm^v \ge \max\{\frac{2-\alpha}{\alpha-1}, 0\}$ and $D_\pm^u, D_\pm^v > 0$ such that $u(\cdot, 0)$ and $v(\cdot,0)$ respectively satisfy (B3) replaced $u_0, \gamma_\pm, D_\pm$ by $u(\cdot, 0), \gamma_\pm^u, D_\pm^u$ and $v(\cdot, 0), \gamma_\pm^v, D_\pm^v$. 
If $u(\cdot, 0) \le v(\cdot, 0)$, then $u \le v$ in $Q_0$. 
\end{thm}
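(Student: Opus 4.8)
The plan is to deduce the comparison from a \emph{preservation of the boundary divergence rate}: I would first show, using the scaling-based comparison of Theorem \ref{thm:com-conti} against carefully chosen explicit barriers, that for every $t$ the sub-solution $u(\cdot,t)$ diverges at $x=\pm b$ no faster than $\psi_{\gamma_\pm^u}$ and the super-solution $v(\cdot,t)$ diverges no slower than $\psi_{\gamma_\pm^v}$. Since $u(\cdot,0)\le v(\cdot,0)$ together with \eqref{as-order-1} forces $\gamma_\pm^u\le\gamma_\pm^v$ (and the corresponding ordering of $D_\pm$, or of the constants $\hat C_\pm$, when the exponents coincide), this makes $u(\cdot,t)-v(\cdot,t)$ bounded from above near $x=\pm b$ for every $t$, after which a doubling-of-variables argument in the spirit of Theorem \ref{thm:com1} can be pushed through.

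Concretely, using the hypothesis (B3) on $u(\cdot,0)$ and the model functions $h_{\gamma_\pm,D_\pm}$ of (h1), I would take the upper barrier $\overline V(x,t):=h_{\gamma_\pm^u,\widetilde D_\pm}(x)+Ct+K$ with $\widetilde D_\pm\ge D_\pm^u$ and $C,K>0$ large, and check that $\overline V$ is a continuous super-solution of \eqref{scale eq1} and \eqref{bdry}, convex near $x=\pm b$ at $t=0$, with $\overline V(\cdot,0)\ge u(\cdot,0)$. The decisive point is the behaviour at the boundary: for $h(x)=\widetilde D_+\psi_{\gamma_+^u}(b-x)$ one gets from (A2) that
\[ g\big(h'(x)\big)\,h''(x)=O\!\left((b-x)^{(\alpha-1)\gamma_+^u+\alpha-2}\right)\qquad\text{as }x\to b, \]
and the exponent is nonnegative precisely because $\gamma_+^u\ge\frac{2-\alpha}{\alpha-1}$; hence $f\big((1+\delta)g(h')h''\big)$ stays bounded up to $x=b$, so choosing $C$ larger than this bound (and than the analogous quantity on the compact middle region, where $h$ can be taken convex with bounded derivatives) makes $\overline V_t=C$ dominate it, yielding the inequality \eqref{scale eq1}. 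Symmetrically I would build a continuous sub-solution $\underline U(x,t):=h_{\gamma_\pm^v,\widehat D_\pm}(x)-Ct-K$ of \eqref{scale eq2} and \eqref{bdry}, convex near $x=\pm b$ at $t=0$, diverging at $x=\pm b$, with $\widehat D_\pm\le D_\pm^v$ and $\underline U(\cdot,0)\le v(\cdot,0)$. Then Theorem \ref{thm:com-conti}(a) gives $u\le\overline V$ and Theorem \ref{thm:com-conti}(b) gives $\underline U\le v$ throughout $Q_0$.

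With these estimates in hand I would finish by contradiction: assuming $u(\bar x,\bar t)>v(\bar x,\bar t)$, fixing $T$ large, and studying $\Phi_\varepsilon(x,y,t)=u(x,t)-v(y,t)-\frac{1}{T-t}-P_\varepsilon(x,y)$ for a penalization $P_\varepsilon$ chosen so that — thanks to $u\le\overline V$, $\underline U\le v$ and $\gamma_\pm^u\le\gamma_\pm^v$ — the supremum of $\Phi_\varepsilon$ is positive and attained at an interior point $(x_\varepsilon,y_\varepsilon,t_\varepsilon)$ staying, uniformly in $\varepsilon$, in a compact subset of $(-b,b)\times(-b,b)\times(0,T)$. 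Applying the Crandall–Ishii lemma there produces $\tau_1-\tau_2\ge T^{-2}>0$ and (via the matrix inequality) $z_1\le z_2$, while subtracting the viscosity inequalities for $u$ and $v$ gives $\tau_1-\tau_2\le f(g(p_1)z_1)-f(g(p_2)z_2)$; if $P_\varepsilon$ has the simple form $|x-y|^2/\varepsilon$ one has $p_1=p_2$ and the right-hand side is $\le 0$ at once, as in Theorem \ref{thm:com1}, whereas if $P_\varepsilon$ must be adapted to the boundary singularity (so that $p_1\ne p_2$) one closes the estimate by letting $\varepsilon\to 0$ and using the decay $|s|^\alpha g(s)\to C_{g\pm}$ with $\alpha>1$ to drive the resulting error terms to zero. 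Either way one reaches a contradiction, which proves $u\le v$.

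The two places I expect genuine work are: (i) verifying that the explicit functions above are viscosity super-/sub-solutions of \eqref{scale eq1}, \eqref{scale eq2} up to $x=\pm b$ — this is where the exponent condition $\gamma_\pm\ge\frac{2-\alpha}{\alpha-1}$ is consumed — and, in particular, handling the critical regime $\gamma_\pm^u=\gamma_\pm^v$ and $D_\pm^u=D_\pm^v$, where $u(\cdot,t)-v(\cdot,t)$ need not tend to $-\infty$ at the boundary but only stay bounded, so that one must build the constants $\hat C_\pm$ of \eqref{as-order-1} into the barriers (equivalently, compare $u$ and $v$ only after subtracting their common singular part $h_{\gamma_\pm,D_\pm}$); and (ii) selecting $P_\varepsilon$ in the final step so that the supremum stays interior while the Crandall–Ishii inequalities still close in this degenerate setting, which is the second essential use of the hypothesis $\alpha>1$. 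Of these, the boundary analysis of the barriers in the critical case is the main obstacle.
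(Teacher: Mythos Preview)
Your barrier step (i) is essentially the content of Lemma~\ref{lem:initial-conti}: the functions $\overline V(x,t)=h_{\gamma_\pm^u,D_\pm^u}(x)+Ct+K$ and $\underline U(x,t)=h_{\gamma_\pm^v,D_\pm^v}(x)-Ct-K$ are exactly what the paper builds there, and your computation $(\alpha-1)\gamma+\alpha-2\ge 0$ is the right reason they work. So that half of the plan is sound.

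The gap is in step (ii). Even granting $u\le\overline V$ and $\underline U\le v$, the na\"ive doubling
\[
\Phi_\varepsilon(x,y,t)=u(x,t)-v(y,t)-\frac{|x-y|^2}{\varepsilon}-\frac{1}{T-t}
\]
has $\sup\Phi_\varepsilon=+\infty$, because you can send $x\to b$ with $y$ held fixed in the interior; the penalty $|x-y|^2/\varepsilon$ stays bounded while $u(x,t)\to\infty$. Knowing that $u(\cdot,t)-v(\cdot,t)$ is bounded from above on the diagonal does not cure this. Your fallback---``adapt $P_\varepsilon$ to the boundary singularity and close using $|s|^\alpha g(s)\to C_{g\pm}$''---is not a concrete mechanism: if you subtract the common singular part and work with $(u-h)$ versus $(v-h)$, the test functions in the viscosity inequalities acquire $h'(x_\varepsilon)$, $h''(x_\varepsilon)$ and $h'(y_\varepsilon)$, $h''(y_\varepsilon)$, which blow up as the maximizer drifts toward $\pm b$, and nothing in your outline prevents that drift in the critical case $\gamma_\pm^u=\gamma_\pm^v$, $D_\pm^u=D_\pm^v$.

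The paper resolves this with two devices you do not mention. First, it takes a \emph{sup-convolution of $u$ in time}, producing $u_\delta$ that is Lipschitz in $t$ with constant $L_\delta$; this is needed later. Second, it introduces a \emph{spatial dilation} parameter $\lambda\in(0,1)$ and compares $u_\delta(\lambda(x-x_0)+x_0,t)$ with $\lambda v(x,t)$: after doubling, the penalty becomes $|(x-x_0)-\lambda(y-x_0)|^2/\varepsilon$, so the limit point of $x_\varepsilon$ is forced into $(-\lambda(b+x_0)+x_0,\lambda(b-x_0)+x_0)\subset(-b,b)$, keeping $u_\delta$ evaluated on a compact set. The Crandall--Ishii matrix inequality then yields $\lambda z_1\le z_2$ (not $z_1\le z_2$), and the time relation becomes $\tau_1=\lambda\tau_2+(T-t_\varepsilon)^{-2}$; to extract $\lambda f^{-1}(\tau_1)-f^{-1}(\tau_2)>0$ from this one must know $|\tau_1|\le L_\delta$, which is precisely what the sup-convolution supplies. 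The two tricks are therefore coupled, and neither is visible in your proposal.
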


Before beginning the proof of the comparison principle, we estimate the divergence rate of $u(\cdot, t)$ and $v(\cdot, t)$ at the boundary points $x = \pm b$. 

\begin{lem}\label{lem:initial-conti}
Let $b>0$. 
Assume that $f$ and $g$ satisfy (A1) and (A2) with $\alpha >1$.  
Let $u$ and $v$ be respectively a sub- and a super-solution of \eqref{flow eq} and \eqref{bdry}. 
Assume that there exists constants $\gamma_\pm^u, \gamma_\pm^v \ge \max\{\frac{2-\alpha}{\alpha-1}, 0\}$ and $D_\pm^u, D_\pm^v > 0$ such that $u(\cdot, 0)$ and $v(\cdot,0)$ respectively satisfy (B3) replaced $u_0, \gamma_\pm, D_\pm$ by $u(\cdot, 0), \gamma_\pm^u, D_\pm^u$ and $v(\cdot, 0), \gamma_\pm^v, D_\pm^v$. 
Let $h_{\gamma_\pm^u, D_\pm^u}$ and $h_{\gamma_\pm^v, D_\pm^v}$ be functions satisfying (h1). 
Then, there exists $\tilde{C}^u, \tilde{C}^v \ge 0$ such that 
\begin{equation}\label{pre-semi-order} 
u(x,t) \le h_{\gamma_\pm^u, D_\pm^u}(x) + \tilde{C}^u (1+t), \quad v(x,t) \ge h_{\gamma_\pm^v, D_\pm^v}(x) - \tilde{C}^v(1+t) \quad \text{for} \; \; (x,t) \in Q_0. 
\end{equation}
Furthermore, it holds that 
\begin{equation}\label{u-ini-conti} 
\limsup_{(x,t) \to (\pm b, 0)} \left(u(x,t) - h_{\gamma_\pm^u, D_\pm^u}(x)\right)= \lim_{x \to \pm b} \left(u(x,0) - h_{\gamma_\pm^u, D_\pm^u}(x)\right) 
\end{equation}
and 
\begin{equation}\label{v-ini-conti} 
\liminf_{(x,t) \to (\pm b, 0)} \left(v(x,t) - h_{\gamma_\pm^v, D_\pm^v}(x)\right) = \lim_{x \to \pm b} \left(v(x,0) - h_{\gamma_\pm^v, D_\pm^v}(x) \right). 
\end{equation}
\end{lem}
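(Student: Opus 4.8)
The plan is to sandwich $u$ from above and $v$ from below by explicit barriers of the form ``a fixed profile that equals $D_\pm\psi_{\gamma_\pm}(b\mp\,\cdot\,)$ near the two endpoints, plus a term affine in $t$'', and then to compare $u$ and $v$ with those barriers using Theorem~\ref{thm:com-conti}. Everything rests on one elementary computation: if $H\in C^2((-b,b))$ coincides with $D\psi_\gamma(b-\,\cdot\,)$ on $[b_0,b)$, then, writing $s=b-x\to0^+$ and using (A2),
\[
g(H'(x))\,H''(x)=\bigl(g(H'(x))\,|H'(x)|^{\alpha}\bigr)\cdot\bigl(|H'(x)|^{-\alpha}H''(x)\bigr),
\]
where the first factor tends to $C_{g+}$ and the second equals, up to a positive constant, $s^{(\alpha-1)\gamma+\alpha-2}$ (with the obvious $-\log s$ modification when $\gamma=0$); since $\alpha>1$, this stays bounded as $x\to b$ exactly when $(\alpha-1)\gamma+\alpha-2\ge0$, i.e. $\gamma\ge\frac{2-\alpha}{\alpha-1}$. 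Hence $f\bigl((1+\delta)g(H')H''\bigr)$ is bounded near $b$, and by the symmetric statement at $-b$ together with continuity it is bounded on all of $(-b,b)$ provided $H$ has this endpoint behaviour at both ends; moreover $H''\ge0$ near $\pm b$ for such a profile, since it equals $D\gamma(\gamma+1)s^{-\gamma-2}$ (or $Ds^{-2}$ when $\gamma=0$). This is precisely why the hypotheses ask $\gamma_\pm^u,\gamma_\pm^v\ge\max\{\frac{2-\alpha}{\alpha-1},0\}$.

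For \eqref{pre-semi-order}, fix $\delta>0$ and set $h:=h_{\gamma_\pm^u,D_\pm^u}$. By (B3) for $u(\cdot,0)$ and continuity on $[-b_0,b_0]$, the function $u(\cdot,0)-h$ is bounded above on $(-b,b)$; by the computation above, so is $f((1+\delta)g(h')h'')$. Choosing $\tilde C^u\ge0$ above both bounds and putting $w(x,t):=h(x)+\tilde C^u(1+t)$, one checks that $w$ is continuous on $Q_0$, diverges at $x=\pm b$, satisfies $w_t=\tilde C^u\ge\max\{f((1+\delta)g(w_x)w_{xx}),0\}$ pointwise, has $w_{xx}(\cdot,0)=h''\ge0$ near $\pm b$, and $w(\cdot,0)\ge u(\cdot,0)$; Theorem~\ref{thm:com-conti}(a) applied to the sub-solution $u$ and the super-solution $w$ then gives $u\le w$ in $Q_0$ (the bound on $u$ needed there is the automatic one from upper semicontinuity on compact subintervals, which is all the rescaling in that proof uses). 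Symmetrically, taking $\tilde w(x,t):=h_{\gamma_\pm^v,D_\pm^v}(x)-\tilde C^v(1+t)$ with $\tilde C^v\ge0$ above $\sup_{(-b,b)}\bigl(h_{\gamma_\pm^v,D_\pm^v}-v(\cdot,0)\bigr)$ and $\sup_{(-b,b)}\bigl(-f(g(h')h'')\bigr)$, one has $\tilde w_t=-\tilde C^v\le\min\{f(g(\tilde w_x)\tilde w_{xx}),0\}$, $\tilde w_{xx}(\cdot,0)\ge0$ near $\pm b$, $\tilde w$ divergent at $x=\pm b$, and $\tilde w(\cdot,0)\le v(\cdot,0)$, so Theorem~\ref{thm:com-conti}(b) applied to the sub-solution $\tilde w$ and the super-solution $v$ yields $\tilde w\le v$ in $Q_0$.

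For \eqref{u-ini-conti} and \eqref{v-ini-conti}, the ``$\ge$'' part of \eqref{u-ini-conti} follows by approaching $(\pm b,0)$ along $t=0$ and invoking (B3), which also identifies the right-hand limits with $\hat C_\pm^u$; likewise the ``$\le$'' part of \eqref{v-ini-conti}. For the reverse inequality in \eqref{u-ini-conti}, fix $\eta>0$; by (B3) there is $b_1\in(b_0,b)$ with $u(\cdot,0)<D_\pm^u\psi_{\gamma_\pm^u}(b\mp\,\cdot\,)+\hat C_\pm^u+\eta$ on $[b_1,b)$ and $(-b,-b_1]$. Choose $\bar h_\eta\in C^2((-b,b))$ equal to those two shifted profiles on $[b_1,b)$ and $(-b,-b_1]$ and large enough on $[-b_1,b_1]$ (possible since $u(\cdot,0)$ is bounded there) that $\bar h_\eta\ge u(\cdot,0)$ on $(-b,b)$. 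Since $\bar h_\eta$ has the required endpoint behaviour, the first paragraph makes $\tilde C_\eta:=\max\{0,\sup_{(-b,b)}f((1+\delta)g(\bar h_\eta')\bar h_\eta'')\}$ finite and $\bar h_{\eta,xx}\ge0$ near $\pm b$; thus $w_\eta(x,t):=\bar h_\eta(x)+\tilde C_\eta t$ meets the hypotheses of Theorem~\ref{thm:com-conti}(a) against $u$, and $u\le w_\eta$ in $Q_0$. For $x\in[b_1,b)$ we have $\bar h_\eta(x)-h(x)=\hat C_+^u+\eta$, so $u(x,t)-h(x)\le\hat C_+^u+\eta+\tilde C_\eta t$, whence $\limsup_{(x,t)\to(b,0)}(u(x,t)-h(x))\le\hat C_+^u+\eta$; letting $\eta\to0$ (and repeating at $-b$) proves \eqref{u-ini-conti}. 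Identity \eqref{v-ini-conti} is obtained symmetrically, with $\tilde w_\eta(x,t):=\underline h_\eta(x)-\tilde C_\eta t$, where $\underline h_\eta\in C^2((-b,b))$ equals $D_\pm^v\psi_{\gamma_\pm^v}(b\mp\,\cdot\,)+\hat C_\pm^v-\eta$ near $\pm b$ and is small enough on $[-b_1,b_1]$ to satisfy $\underline h_\eta\le v(\cdot,0)$, using Theorem~\ref{thm:com-conti}(b) against $v$.

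The main obstacle is the exponent bookkeeping that keeps $f(g(H')H'')$ bounded up to the boundary — this is the one place where the threshold $\gamma_\pm\ge\frac{2-\alpha}{\alpha-1}$ and the standing assumption $\alpha>1$ are genuinely used — together with the routine but fussy $C^2$-interpolation of a single profile with prescribed asymptotics at both $\pm b$ that simultaneously dominates, respectively is dominated by, $u(\cdot,0)$, respectively $v(\cdot,0)$, on the whole interval. A minor point worth stating carefully is that the sub-solution in the first comparison is only upper semicontinuous, not globally bounded from above; this causes no difficulty because the rescaling in the proof of Theorem~\ref{thm:com-conti} only ever needs the bound on compact subintervals supplied by upper semicontinuity.
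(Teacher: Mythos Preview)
Your proposal is correct and follows essentially the same strategy as the paper: build barriers of the form ``profile with the prescribed endpoint asymptotics $+$ an affine function of $t$'', verify they satisfy the strengthened inequalities \eqref{scale eq1}/\eqref{scale eq2} via the exponent computation, and then apply Theorem~\ref{thm:com-conti} to obtain both the pointwise bound \eqref{pre-semi-order} and the boundary continuity \eqref{u-ini-conti}--\eqref{v-ini-conti}. The only substantive difference in execution is how the profile is produced: the paper mollifies $U:=u(\cdot,0)-h_{\gamma_\pm^u,D_\pm^u}$ and takes $u_\varepsilon=\rho_\varepsilon*U+h_{\gamma_\pm^u,D_\pm^u}$, so that a single $\varepsilon$-family of barriers $\tilde v=u_\varepsilon+\omega(\varepsilon)+Mt$ simultaneously yields \eqref{pre-semi-order} (for fixed $\varepsilon$) and \eqref{u-ini-conti} (by sending $\varepsilon\to0$); you instead use $h_{\gamma_\pm^u,D_\pm^u}$ directly for \eqref{pre-semi-order} and construct a separate $C^2$-interpolated profile $\bar h_\eta$ for the limit identities. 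The paper's route is slightly cleaner (one construction instead of two, and the uniform approximation $|u_\varepsilon-u(\cdot,0)|\le\omega(\varepsilon)$ comes for free), while your route is perhaps more transparent about exactly which properties of the barrier are needed at each step. Your explicit remark that Theorem~\ref{thm:com-conti}(a), as proved, only requires upper semicontinuity of $u$ on compact subintervals rather than the stated divergence hypothesis is a valid observation that the paper simply uses without comment.
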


\begin{rmk}\label{rk:preservation-order}
If $u$ is a viscosity solution satisfying (B3) with $\gamma_\pm \ge \max\{\frac{2-\alpha}{\alpha - 1}, 0\}$, the property \eqref{pre-semi-order} replaced respectively $u, v$ by $u^*, u_*$ holds. 
The estimate \eqref{pre-semi-order} thus also shows that, letting $h_{\gamma_\pm, D_\pm}$ be a function satisfying (h1), for any $T>0$, there exists $\hat{C}_T >0$ such that 
\[ |u(x,t) - h_{\gamma_\pm, D_\pm}(x)| \le \hat{C}_T \quad \text{for} \; \; (x,t) \in Q_0^T, \]
for each $t \ge 0$. 
Therefore, the function $u(\cdot, t)$ preserve the divergence rate assumed in (B3). 
\end{rmk}

\begin{proof}[Proof of Lemma \ref{lem:initial-conti}]
We prove the first inequality in \eqref{pre-semi-order} and \eqref{u-ini-conti} since the other estimate can be proved similarly. 
Due to the assumption (B3) for $u$, we can choose a continuous function $U$ defined in $\mathbb{R}$ satisfying $U \equiv u(\cdot, 0) - h_{\gamma_{\pm}^u, D_{\pm}^u}$ in $(-b, b)$. 
We not that it holds that 
\begin{equation}\label{conti-bdry-U}
U(\pm b) = \lim_{x \to \pm b} u(x, 0) - h_{\gamma_{\pm}^u, D_{\pm}^u}(x). 
\end{equation}
Let $\rho_\varepsilon$ be a mollifier and $u_{\varepsilon}(x) := \rho_\varepsilon * U(x) + h_{\gamma_{\pm}^u, D_{\pm}^u}(x)$ for $x \in [-b, b]$. 
We then obtain  
\begin{equation}\label{modulus}
\sup_{x \in (-b, b)} |u_\varepsilon(x) - u(x, 0)| = \omega(\varepsilon), 
\end{equation}
where $\omega$ is a modulus of continuity. 
Furthermore, due to $(h_{\gamma_{\pm}^u, D_{\pm}^u})_{xx} (x) \to \infty$ as $x \to \pm b$ and $\rho_\varepsilon * U \in C^\infty([-b, b])$, for any $\varepsilon > 0$ there exists $b_0 \in (0, b)$ such that 
\begin{equation}\label{convex-u-e}
(u_\varepsilon)_{xx}(x) \ge 0 \quad \text{for}  \; \; x \in (-b, -b_0) \cup (b_0, b).
\end{equation} 

We hereafter fix arbitrary $\varepsilon > 0$ and construct a sub-solution. 
From the assumption (A2), there exist $L>0$ and $\hat{C} > 0$ such that 
\[ g(s) \le \hat{C} |s|^{-\alpha} \quad \text{for} \; \; |s| > L. \]
Therefore, the condition (h1) and $\rho_\varepsilon * U \in C^\infty ([-b, b])$ yield that there exists $b_1 \in (0, b)$ such that 
\[ |(u_\varepsilon)_x(x)| > L \quad \text{for} \; \; |x| \in [b_1, b), \]
and thus, letting $\hat{\alpha} := \max\{\frac{2-\alpha}{\alpha-1}, 0\}$, it holds that 
\[ g((u_\varepsilon)_x(x)) (u_\varepsilon)_{xx}(x) \le C(\min\{b-x, b+x\})^{\hat{\alpha}(\alpha-1) + \alpha -2} \quad \text{for} \; \; |x| \in [b_1, b) \]
for some $C>0$. 
Since $\hat{\alpha}(\alpha - 1) + \alpha -2 > 0$, if we fix $\delta > 0$, there exists $M > 0$ such that 
\[ f\left((1+\delta) g((u_\varepsilon)_x(x)) (u_\varepsilon)_{xx}(x) \right) \le M \quad \text{for} \; \; x \in (-b, b). \]
Therefore, letting 
\[ \tilde{v}(x,t) := u_\varepsilon(x) + \omega(\varepsilon) + M t \quad \text{for} \; \; (x,t) \in [-b, b] \times [0, \infty) , \] 
since $\tilde{v}$ satisfies the condition (a) in Theorem \ref{thm:com-conti} due to \eqref{modulus} and \eqref{convex-u-e}, we can apply the comparison result to obtain $u \le \tilde{v}$ on $Q_0$. 
It yields the estimate \eqref{pre-semi-order} for $u$ with 
\[ \tilde{C}^u := \max\left\{\left(\sup_{x \in [-b, b]} |\rho_\varepsilon * U (x)|\right) + \omega(\varepsilon), M\right\}. \] 

Taking the limit superior of $u - h_{\gamma_{\pm}^u, D_{\pm}^u} \le \tilde{v} - h_{\gamma_{\pm}^u, D_{\pm}^u}$ as $(x,t) \to (\pm b, 0)$ and letting $\varepsilon \to 0$, we have by \eqref{conti-bdry-U}
\[ \limsup_{(x,t) \to (\pm b, 0)} u(x,t) - h_{\gamma_{\pm}^u, D_{\pm}^u}(x) \le U(\pm b) = \lim_{x \to \pm b} u(x, 0) - h_{\gamma_{\pm}^u, D_{\pm}^u}(x). \]
Since it obviously holds that 
\[ \limsup_{(x,t) \to (\pm b, 0)} u(x,t) - h_{\gamma_{\pm}^u, D_{\pm}^u}(x) \ge \lim_{x \to \pm b} u(x, 0) - h_{\gamma_{\pm}^u, D_{\pm}^u}(x), \]
we have \eqref{u-ini-conti}. 
\end{proof}

Let us now prove Theorem \ref{thm:com2}. 

\begin{proof}[Proof of Theorem \ref{thm:com2}]
Let $h_u \in C^2((-b,b))$ be a convex function satisfying (h1) with $\gamma_\pm = \gamma_\pm^u$ and $D_\pm = D_\pm^u$. 
Due the the assumption (B3) for $u$, there exists $m \in \mathbb{R}$ such that 
\[ u(x, 0) \ge h_u(x) - m \quad \text{for} \; \; x \in (-b,b). \]
Since $h_u$ is a convex function, $h_u$ is a stationary sub-solution to \eqref{flow eq} and \eqref{bdry}. 
By a standard argument, we can see that 
\[ \tilde{u}(x,t) := \max\{u(x,t), h_u(x)-m\} \quad \text{for} \; \; (x,t) \in Q_0 \]
is also a sub-solution to \eqref{flow eq} and \eqref{bdry} satisfying $\tilde{u}(\cdot, 0) \equiv u(\cdot, 0)$. 
We will prove $\tilde{u} \le v$ on $Q_0$, which yields $u \le v$ on $Q_0$ due to the definition of $\tilde{u}$. 

We note that the definition of $\tilde{u}$ and \eqref{pre-semi-order} yield that, for any $T>0$, there exists $C_T > 0$ such that 
\begin{equation}\label{bdd-u-h}
|\tilde{u}(x,t) - h_u(x)| \le C_T \quad \text{for} \; \; (x,t) \in Q_0^T. 
\end{equation}
Our proof consists of several steps. 

\ul{\textit{Step 1. Sup-convolution}}

We first take the sup-convolution for $\tilde{u}$ with respect to the time variable. 
Fix $T > 0$ arbitrarily.  
For any $\delta>0$ and $(x, t)\in Q_0^T$, let 
\[
u_\delta(x, t)=\sup_{s\in [0, T+1]} \left\{\tilde{u}(x, s)-{|t-s|^2\over \delta}\right\}.
\]
By the definition, it is clear that 
\begin{equation}\label{remove_lipa}
\tilde{u} \leq u_{\delta_1}\leq u_{\delta_2} \quad \text{in} \; \;  Q_0^T \quad \text{for all} \; \; 0<\delta_1\leq \delta_2. 
\end{equation} 
Due to the upper semicontinuity of $\tilde{u}$, there exists $s_\delta(x, t)\in [0, T+1]$ such that 
\beq\label{sup-convolution max}
u_\delta(x, t)=\tilde{u}(x, s_\delta(x, t))-{|t-s_\delta(x, t)|^2\over \delta}.
\eeq
From $u_\delta(x, t)\geq \tilde{u}(x, t)$ and \eqref{bdd-u-h}, we have 
\begin{equation}\label{remove_lipb}
|t - s_\delta(x, t)| \le  \{(|\tilde{u}(x, s_\delta(x,t)) - h_u(x)| \delta + |\tilde{u}(x,t) - h_u(x)|) \delta \}^{1/2} \le \sqrt{2 C_{T+1} \delta}. 
\end{equation}
Hereafter we take $\delta>0$ satisfying 
\beq\label{delta range}
\sqrt{2 C_{T+1} \delta}<\min\{1,\ T\}.
\eeq
It is also not difficult to see that $u_\delta$ is upper semicontinuous and satisfies 
\begin{equation}\label{u-delta-bdd}
|u_\delta(x,t) - h_u(x,t)| \le C_{T+1} \quad \text{for} \; \; (x,t) \in Q_0^T
\end{equation} 

Due to \eqref{bdd-u-h} and \eqref{u-delta-bdd}, we can expand the functions $\tilde{u}-h_u$ and $u_\delta - h_u$ to be defined on $\overline{Q_0^T}$ by taking the upper semicontinuous envelopes $(\tilde{u} - h_u)^*$ and $(u_\delta - h_u)^*$ defined as in \eqref{envelope}, respectively. 
Notice the continuity of $\tilde{u}(\cdot, 0)$ on $(-b, b)$ and the estimate \eqref{u-ini-conti} yield the continuity of $(\tilde{u} - h_u)^*(\cdot, 0)$ on $[-b, b]$. 
Then, we can get 
\beq\label{remove lip4}
\sup_{x \in [-b, b]} (u_\delta - h_u)^* (x,0) - (\tilde{u}-h_u)^* (x,0) \leq \omega(\delta), 
\eeq
where $\omega$ is a modulus of continuity. 
Indeed, since $(u_\delta - h_u)^*(\cdot, 0)$ is upper semicontinuous and $(\tilde{u} - h_u)^*(\cdot, 0)$ is continuous on $[-b, b]$, the function $(u_\delta - h_u)^* (\cdot,0) - (\tilde{u}-h_u)^* (\cdot,0)$ attains a maximum at some point $x_\delta \in [-b, b]$. 
We thus obtain 
\[ \sup_{x \in [-b, b]} (u_\delta - h_u)^* (x,0) - (\tilde{u} - h_u)^* (x,0)  = (\tilde{u}-h_u)^*(x_\delta, s_\delta(x_\delta,0)) - (\tilde{u}-h_u)^*(x_\delta, 0). \] 
Therefore, since $s_\delta(x_\delta, 0) \in [0, \sqrt{2C_{T+1} \delta}]$ follows from \eqref{remove_lipb}, the upper semicontinuity of $(\tilde{u}-h_u)^*$ on $\overline{Q_0^T}$ and the continuity of $(\tilde{u}-h_u)^*(\cdot, 0)$ on $[-b, b]$ yield 
\[ \limsup_{\delta \to +0} \sup_{x \in [-b, b]} (u_\delta - h_u)^* (x,0) - (\tilde{u}-h_u)^*(x,0) \le 0. \]
This implies \eqref{remove lip4}. 

Moreover, we can apply a standard argument to show that $u_\delta$ is a subsolution in $Q_{T, \delta}:=~(-b, b)\times (\sqrt{2C_{T+1} \delta}, T)$. 
Indeed, if there exists $\phi\in C^2(\ol{Q})$ such that $u_\delta-\phi$ attains a local maximum at some $(x_0, t_0)\in Q_{T, \delta}$ with $\phi(x_0, t_0)=u_\delta(x_0, t_0)$, then 
\beq\label{remove lip2}
(x, t, s) \mapsto \tilde{u}(x, s)-{|t-s|^2\over \delta}-\phi(x, t)
\eeq
attains a local maximum in $(-b,b) \times (\sqrt{2 C_{T+1} \delta}, T) \times (0,T+1)$ at $(x_0, t_0, s_\delta(x_0, t_0))$. 
Since $t_0 > \sqrt{2C_{T+1} \delta}$, it follows from \eqref{remove_lipb} and \eqref{delta range} that $0<s_\delta(x_0, t_0) < T+1$. 
Noticing that
\[
(x, s)\mapsto \tilde{u}(x, s)- {|t_0-s|^2\over \delta}-\phi(x, t_0)
\]
attains a maximum at $(x_0, s_\delta(x_0, t_0))$, we may apply the definition of subsolutions to get 
\beq\label{remove lip3}
{2(s_\delta(x_0, t_0)-t_0)\over \delta}+F(\phi_x(x_0, t_0), \phi_{xx}(x_0, t_0))\leq 0.
\eeq
On the other hand, the maximality in \eqref{remove lip2} also implies that 
\[
{2(s_\delta(x_0, t_0)-t_0)\over \delta}=\phi_t(x_0, t_0),
\]
which, combined with \eqref{remove lip3}, completes the verification of the subsolution property of $u_\delta$ at every $(x_0, t_0)\in Q_{T, \delta}$.

Another important property of $u_\delta$ is its Lipschitz continuity in time; for any fixed $x\in (-b, b)$, 
\beq\label{time lip}
|u_\delta(x, t_1)-u_\delta(x, t_2)|\leq L_\delta |t_1-t_2| \quad \text{for} \; \; t_1, t_2 \in [0,T], \; \; x \in (-b,b),  
\eeq
where $L_\delta>0$ is a constant depending only on $T > 0$ and $\delta>0$. 
Indeed, by direct calculation we obtain 
\[\begin{aligned}
&\; u_\delta(x, t_1) - u_\delta(x,t_2) \\
\le&\; \left\{\tilde{u}(x, s_\delta(x,t_1)) - {|t_1-s_\delta(x,t_1)|^2\over \delta}\right\} - \left\{\tilde{u}(x, s_\delta(x,t_1)) - {|t_2-s_\delta(x,t_1)|^2\over \delta}\right\} \\
=&\; \frac{1}{\delta}(t_1+t_2 - s_\delta(x,t_1)) (t_2 - t_1) \le \frac{3T +1}{\delta} |t_1 - t_2|. 
\end{aligned}\]
Interchanging the roles of $t_1$ and $t_2$ enables us to obtain a symmetric estimate. We thus get \eqref{time lip} with
$L_\delta=(3T+1)/\delta.$

\ul{\textit{Step 2. Choice of $\delta$ for comparison near t=0}}

Let us now proceed to the main part of the proof. 
Assume by contradiction that  there exists $(x_0, t_0) \in Q$ such that $\tilde{u}(x_0, t_0) - v(x_0, t_0) > 0$. 
Then, there exists $T>t_0$ large and $\mu > 0$ small such that 
\[ \tilde{u}(x_0,t_0) - v(x_0,t_0) - \frac{1}{T-t_0} > \mu. \]
Fix such $T>0$ and let $u_\delta$ be the sup-convolution of $\tilde{u}$ with this $T$, where $\delta>0$ is taken to satisfy \eqref{delta range}. 

It follows from the monotonicity \eqref{remove_lipa} that 
\begin{equation} \label{contra cond} 
\Psi_\delta (x_0, t_0) > \mu 
\end{equation}
for any $\delta > 0$, where 
\[
\Psi_\delta(x, t)=u_\delta(x, t)-v(x, t)-{1 \over T-t} \quad \text{for} \; \; (x,t) \in Q_0^T. 
\]
We note that, from $u(\cdot, 0) \le v(\cdot, 0)$, the constants $D_\pm^v$ and $\gamma_\pm^v$ related to the divergence rate of $v_0$ around the boundary points satisfy $\gamma_E^v \ge \gamma_E^u$ respectively for $E=+$ and $E=-$, and, in particular, $D_E^v \ge D_E^u$ if $\gamma_E^v = \gamma_E^u$. 
Therefore, letting $(h_u - v)_*$ be the lower semicontinuous envelope as in \eqref{envelope2}, we have $(v-h_u)_*(\pm b, t) \in (-\infty, \infty]$ for $t \ge 0$ due to \eqref{pre-semi-order}. 
Hereafter, we extend the domain of $\Psi$ as 
\[ \hat{\Psi}_\delta(x,t) := (u_\delta - h_u)^*(x,t) - (v - h_u)_* (x,t) - \frac{1}{T-t} \quad \text{for} \; \; (x,t) \in [-b, b] \times [0,T). \]
Notice that $\hat{\Psi}_\delta = \Psi_\delta$ holds on $Q_0^T$.

We next prove that there exists $\delta$ small such that 
\begin{equation}\label{remove_lipd} 
\sup_{[-b, b] \times [0, 2 \sqrt{2 C_{T+1} \delta}]} \hat{\Psi}_\delta \le -\frac{1}{4T}. 
\end{equation}
Indeed, 
In view of \eqref{remove lip4} and $\tilde{u}(\cdot, 0) = u(\cdot, 0) \le v(\cdot, 0)$, we can take $\delta_1 > 0$ sufficiently small to deduce that 
\begin{align*}
\sup_{x\in [-b, b]} \hat{\Psi}_{\delta_1}(x, 0) \le &\; \sup_{x\in [-b, b]} \{(\tilde{u} - h_u)^*(x, 0)-(v - h_u)_*(x, 0)\} + \omega(\delta_1) -{1 \over T} \\
=&\; \sup_{x \in (-b,b)} \{\tilde{u}(x, 0) - v(x, 0)\} + \omega(\delta_1) -{1 \over T} \le -{1 \over 2T}.
\end{align*}
Here, the properties \eqref{u-ini-conti} and \eqref{v-ini-conti} have been used in the equation transformation if $(\gamma_+^v, D_+^v) = (\gamma_+^u, D_+^u)$ or $(\gamma_-^v, D_-^v) = (\gamma_-^u, D_-^u)$. 
Therefore, we have by the upper semicontinuity of $(u_{\delta_1} - h_u)^* - (v - h_u)_*$ 
\begin{align*}
&\; \limsup_{t \to 0} \sup_{x \in[-b, b]} \hat{\Psi}_{\delta_1}(x,t) \le \sup_{x \in [-b, b]} \hat{\Psi}_{\delta_1}(x,0) \le -\frac{1}{2T}, 
\end{align*}
which yields that there exists $t_1 > 0$ such that 
\[ \sup_{ [-b,b] \times [0, t_1]} \hat{\Psi}_{\delta_1} \le - \frac{1}{4T}. \]
Taking \eqref{delta range} into consideration, we choose $\delta>0$ such that 
\beq\label{delta range2}
\delta<\delta_1, \quad \sqrt{2 C_{T+1} \delta}<\min\left\{1,\ T,\ {t_1\over 2}\right\}. 
\eeq
We thus obtain \eqref{remove_lipd} by \eqref{remove_lipa}. 
Let us fix $\delta >0$ as in \eqref{delta range2}. 

\ul{\textit{Step 3. Scaling argument and comparison}}

For $\lambda \in (0,1)$, we define $\Psi_{\delta, \lambda}$ by 
\[ \Psi_{\delta, \lambda}(x,t) := u_\delta (\lambda (x-x_0) + x_0, t) - \lambda v(x, t) - \frac{1}{T-t} \quad \text{for} \; \; (x, t) \in Q_0^T. \]
Then, in view of \eqref{contra cond}, we have 
\begin{equation}\label{range lambda0} 
\Psi_{\delta, \lambda}(x_0, t_0) \ge \frac{\mu}{2} 
\end{equation}
for $\lambda \in (0,1)$ close to $1$. 

We take $\lambda\in (0, 1)$ sufficiently close to $1$ to also satisfy
\beq\label{range lambda1}
\left({1\over \lambda} - 1\right)L_\delta \leq {1 \over 2T^2}
\eeq
and 
\beq\label{range lambda2}
\min_{|s| \le L_\delta} \left\{\lambda f^{-1}(s) - f^{-1}\left(s - \dfrac{1}{2T^2}\right)\right\}>0. 
\eeq
Recall that $L_\delta>0$ appeared in \eqref{time lip}. 

Furthermore, since there exists a modulus of continuity $\omega$ such that 
\[ \sup_{x \in (-b, b)} h_u(\lambda (x-x_0) + x_0) - \lambda h_u(x) = \omega(1-\lambda), \]
we have 
\begin{align*} 
&\; \sup_{(-b, b)\times [0, 2 \sqrt{2C_{T+1} \delta}]} \Psi_{\delta, \lambda} \le \\
&\; \left(\sup_{[-b, b] \times [0, 2 \sqrt{2C_{T+1} \delta}]} (u_\delta - h_u)^*(\lambda (x-x_0) + x_0, t) - \lambda (v-h_u)_* (x, t) - \frac{1}{T-1} \right) + \omega(1-\lambda), 
\end{align*} 
which yields by \eqref{remove_lipd} and the semicontinuity of $(u_\delta - h_u)^*$ and $(v-h_u)_*$ 
\[ \limsup_{\lambda \to 1-0} \sup_{(-b, b)\times [0, 2 \sqrt{2C_{T+1} \delta}]} \Psi_{\delta, \lambda} \le \sup_{[-b, b] \times [0, 2 \sqrt{2C_{T+1} \delta}]} \hat{\Psi}_{\delta} \le -\frac{1}{4T}. \]
Therefore, it holds that 
\begin{equation}\label{range lambda3}
\sup_{(-b, b) \times [0, 2 \sqrt{2C_{T+1} \delta}]} \Psi_{\delta, \lambda} \le 0 
\end{equation}
when $\lambda$ is chosen to be close to $1$. 
We fix $\lambda\in (0, 1)$ that satisfies \eqref{range lambda0}--\eqref{range lambda3}. 

Let us now double the space variables. Consider, for any $\vep>0$ small, 
\[
\Phi_{\vep}(x, y, t):=u_\delta(x, t)-\lambda v(y, t)-{|(x-x_0)- \lambda (y-x_0)|^2\over \vep}-{1 \over T-t}
\]
for $x\in [-b, b]$, $y \in (-b,b)$ and $t\in [0, T)$.
Since 
\beq\label{remove lip5}
\Phi_{\vep}(x_0, x_0, t_0)=\Psi_{\delta, \lambda}(x_0, t_0) \ge \frac{\mu}{2}
\eeq
and $\lim_{y \to \pm b} v(y,t) = \infty$, $\Phi_{\vep}$ attains a positive maximum at some $(x_\vep, y_\vep, t_\vep)\in [-b, b]\times (-b, b) \times [0, T)$. 
The relation \eqref{remove lip5} also enables us to deduce that 
\[
{|(x_\vep - x_0) - \lambda (y_\vep - x_0)|^2\over \vep}\leq u_\delta(x_\vep, t_\vep)-u_\delta(x_0, t_0)-\lambda v(y_\vep, t_\vep)+\lambda v(x_0, t_0)- {1 \over T-t_0},
\]
which implies the existence of $(\hat{z}, \hat{t})\in (-\lambda (b + x_0), \lambda(b-x_0) ) \times [0, T)$ such that along a subsequence
\[
x_\vep-x_0,  \lambda (y_\vep - x_0) \to \hat{z}, \quad t_\vep\to \hat{t}
\]
as $\vep\to 0$. Hereafter we still index the converging subsequence by $\vep$ for convenience of notation. 

By the convergence property, we have $x_\vep, y_\vep \in (-b,b)$ when $\vep > 0$ is sufficiently small. 
Moreover, since it holds that 
\[ \Phi_{\vep}(\lambda (x-x_0) + x_0, x, t) = \Psi_{\delta, \lambda}(x,t) \quad \text{for} \; \; (x,t) \in Q_0^T, \]
by the semicontinuity of $u_\delta$ and $v$, we can use \eqref{remove lip5} to get 
\[
\Psi_{\delta, \lambda}\left(\frac{\hat{z}}{\lambda} + x_0, \hat{t}\right) \geq \limsup_{\vep\to 0}\Phi_{\vep}(x_\vep, y_\vep, t_\vep)\geq \sup_{Q_0^T} \Psi_{\delta, \lambda} >0,
\]
which yields $\hat{t}\geq 2\sqrt{2C_{T+1}\delta}$ from \eqref{range lambda3} and thus $t_\vep>\sqrt{2C_{T+1}\delta}$ when $\vep>0$ is sufficiently small. 
We fix such $\vep > 0$. 

We next apply the Crandall-Ishii lemma (cf. \cite[Theorem 8.3]{CIL}) to find, for any $\sigma>0$ small, 
$(\tau_1, p_1,  z_1)\in \ol{P}^{2, +} u_\delta(x_\vep, t_\vep)$ and $(\tau_2, p_2, z_2)\in \ol{P}^{2, -} v(y_\vep, t_\vep)$ satisfying 
\beq\label{semijet00}
 \tau_1={ \lambda \tau_2}+{1\over (T-t_\vep)^2},
\eeq
\beq\label{semijet1}
p_1=p_2={2((x_\vep-x_0)-\lambda (y_\vep-x_0))\over \vep}, 
\eeq
and 
\beq \label{semijet2}
\begin{pmatrix}
z_1 & 0\\
0 & -\lambda z_2
\end{pmatrix} \le
\dfrac{2}{\vep}\begin{pmatrix} 1 & -\lambda \\ -\lambda & \lambda^2 \end{pmatrix}+{4\sigma\over \vep^2}\begin{pmatrix} 1 & -\lambda \\ -\lambda & \lambda^2 \end{pmatrix}^2. 
\eeq
The time Lipschitz regularity of $u_\delta$ as in \eqref{time lip} yields 
\begin{equation}\label{bdd-jet-t}
|\tau_1|\leq L_\delta,
\end{equation}
and therefore by \eqref{range lambda1} and \eqref{semijet00} again
\beq\label{semijet0}
\tau_1-\tau_2\geq -(\lambda^{-1} - 1)L_\delta+{1\over \lambda(T-t_\vep)^2}\geq {1 \over 2T^2}.
\eeq
It follows from the monotonicity of $f^{-1}$, \eqref{range lambda2}, \eqref{bdd-jet-t} and \eqref{semijet0} that 
\begin{equation} \label{time contra}
\lambda f^{-1}(\tau_1)-f^{-1}(\tau_2) \ge \min_{|s| \le L_\delta} \left\{\lambda f^{-1}(s) - f^{-1}\left(s - \dfrac{1}{2T^2}\right)\right\} > 0. 
\end{equation}
Multiplying both sides of \eqref{semijet2} by the vector $(\lambda, 1)$ from left and from right, we deduce that 
\beq\label{semijet3}
\lambda z_1\leq z_2.
\eeq

We then adopt the definition of viscosity sub- and super-solutions for $u_\delta$ and $v$ respectively to deduce that 
\[
\tau_1+F(p_1, z_1)\leq 0
\]
and
\[
\tau_2+F(p_2, z_2)\geq 0,
\]
which yields by the assumption (A1), \eqref{semijet1} and \eqref{semijet3}
\[
\lambda f^{-1}(\tau_1)-f^{-1}(\tau_2) \leq \lambda g(p_1)z_1 - g(p_2)z_2 \le 0. 
\]
This is a contradiction to \eqref{time contra}. 
\end{proof}

\section{Existence and nonexistence of solutions}\label{sec:exist}

In this section, we prove the existence and non-existence theories as in Theorem \ref{thm:existence}--\ref{thm:non-existence}. 
We adopt Perron's method when we prove the existence of the solution, and thus, the solution will be constructed as 
\[ u(x,t) := \sup\{\phi(x,t): \phi \; \text{is a sub-solution to \eqref{flow eq} satisfying} \; w \le \phi \le v\}, \]
where $w$ and $v$ are sub- and super-solution to \eqref{flow eq}--\eqref{initial}. 
Furthermore, we also need to construct a sub-solution diverging at the boundary points $x=\pm b$ for any $t>0$ to prove that $u$ satisfies the boundary condition \eqref{bdry}. 
Whether such sub-solutions and super-solution can be constructed depends on the functions $f$ and $g$ in the equation \eqref{flow eq} and the boundedness of the function $u_0$. 

When we prove the non-existence of the solution, we will prove that if we assume the existence of the solution $u$ then $u(x,t) = \infty$ for any $(x,t) \in (-b, b) \times (0, \infty)$. 
The instantaneous interior blowup described above can be proved by constructing a sequence of sub-solutions that are uniformly bounded at initial time but diverges to $\infty$ at any interior points for $t>0$. 
The constructability of such sequence also depends on $f$ and $g$. 

In the following subsections, we divide the cases into (1) $\alpha>1$ under the assumption (B2); (2) $1 < \alpha \le 2$ under the assumption (B1); (3) $\alpha = 1$ or $\alpha < 1$ and $\beta < \frac{1}{1-\alpha}$; and (4) $\alpha < 1$ and $\beta \ge \frac{1}{1-\alpha}$ for the constants $\alpha$ in the assumption (A2) and $\beta$ in the assumption (A3). 
We then give a proof for each existence and non-existence theorem below. 

\subsection{Existence for $\alpha >1$ under the assumption (B2)} \label{subsec:exists1}

When $\alpha > 1$, as we calculated in the proof of Lemma \ref{lem:initial-conti}, due to the assumption (A2), we have 
\begin{align*} 
&g(h_x(x)) h_{xx}(x) \approx (b-x)^{\gamma_+(\alpha -1) + \alpha -2} \quad \text{as} \; \; x \to b, \\
&g(h_x(x)) h_{xx}(x) \approx (b+x)^{\gamma_-(\alpha -1) + \alpha -2} \quad \text{as} \; \; x \to -b
\end{align*}
for $h \in C^2((-b, b))$ satisfying the condition (h1) for some constants $\gamma_\pm \ge 0$ and $D_\pm >0$. 
Therefore, if $\gamma_\pm \ge \max\{\frac{2 - \alpha}{\alpha-1}, 0\}$, we have the boundedness of $g(h_x)h_{xx}$ on $(-b,b)$. 
In this case, we can construct a traveling wave super-solution $v$ to \eqref{flow eq} and \eqref{bdry} can be constructed by letting 
\[ M:= \sup_{x \in (-b, b)} g(h_x(x)) h_{xx}(x) > 0, \quad v(x,t) := h(x) + f(2M) t \; \; \text{for} \; \; (x,t) \in Q_0. \] 
We here note that the speed constant $f(2M)$ is chosen so that Theorem \ref{thm:com-conti} can be applied. 

In the proof of the existence theory when $\alpha > 1$, to apply Perron's method, we first approximate $u_0$ from above with $v_{\varepsilon, 0} \in C^\infty((-b, b))$ satisfying the condition (h1) with some constants $\gamma_\pm \ge \max\{\frac{2 - \alpha}{\alpha-1}, 0\}$ and $D_\pm >0$, and we then construct a traveling wave super-solution $v_\varepsilon$ by the above strategy. 
For the approximation to $u_0$ from below, we use bounded and smooth functions $w_{\varepsilon, 0} \in C^\infty([-b, b])$. 
This allows us to construct a traveling wave sub-solution $w_\varepsilon$ as well as the super-solution $v_\varepsilon$. 
Since we can obtain the sequence of solution $u_\varepsilon$ between $v_\varepsilon$ and $w_\varepsilon$ due to Perron's method, we can apply the standard stability argument for viscosity solutions to construct a solution $u$ to \eqref{flow eq}--\eqref{initial}. 

We now give a proof of Theorem \ref{thm:existence} and Theorem \ref{thm:existence2} under the assumption (B2). 

\begin{proof}[Proof of Theorem \ref{thm:existence} and Theorem \ref{thm:existence2} under the assumption (B2)]
We prove the existence of the solution stated in Theorem \ref{thm:existence} and Theorem \ref{thm:existence2} assuming that the initial function $u_0$ satisfies (B2). 
The continuity and the uniqueness of the solution under the assumption (B3) with $\gamma_\pm \ge \max\{\frac{2-\alpha}{\alpha-1}, 0\}$ can be proved easily by using the comparison result in Theorem \ref{thm:com2}, and we thus omit its proof. 

In order to prove the existence theory, we apply the Perron method. 
Therefore, we have to construct sequences of super- and sub-solutions starting from a perturbation of $u_0$. 
We first construct a sequence of super-solutions $v_\varepsilon$ for $\varepsilon > 0$. 

Let $\hat{\gamma} := \max\{\gamma, \frac{2-\alpha}{\alpha - 1}\} > 0$, where $\gamma$ is the constant in (B2). 
Let also $C > 0$ be a constant satisfying 
\[ C \ge \max\left\{\limsup_{x \to b} u_0(x) (b-x)^{\hat{\gamma}}, \limsup_{x \to -b} u_0(x) (b+x)^{\hat{\gamma}} \right\} \]
and $h \in C^2((-b, b))$ be a function satisfying 
\[ h(x) = \begin{cases}
C(b-x)^{-\hat{\gamma}} & \text{for} \quad x \in [b_0, b), \\
C(b+x)^{-\hat{\gamma}} & \text{for} \quad x \in (-b, -b_0] 
\end{cases} \]
for some $b_0 \in (0, b)$. 
Then, due to \eqref{initial div}, we have $u_0(x) - h(x) \to -\infty$ as $x \to \pm b$. 
Therefore, for $\varepsilon > 0$, we can choose $\tilde{v}_{\varepsilon, 0} \in C(\mathbb{R})$ as
\[ \tilde{v}_{\varepsilon, 0}(x) = \max \left\{u_0(x) - h(x), - \frac{1}{\varepsilon}\right\} \; \; \text{for} \; \; x \in (-b, b), \quad \tilde{v}_{\varepsilon, 0}(x) = -\frac{1}{\varepsilon} \; \; \text{for} \; \; |x| \ge b \]
and $\hat{v}_{\varepsilon, 0} \in C^\infty(\mathbb{R})$ satisfying 
\[ |\hat{v}_{\varepsilon, 0}(x) - \tilde{v}_{\varepsilon, 0}(x)| \le \varepsilon \quad \text{for} \; \; x \in [-b, b]. \]
Letting 
\[ v_{\varepsilon, 0}(x) := \hat{v}_{\varepsilon, 0}(x) + h(x) \quad \text{for} \; \; x \in (-b, b), \]
we have
\begin{align}
&\lim_{\varepsilon \to 0} v_{\varepsilon, 0}(x) = u_0(x) \quad \text{for} \; \; x \in (-b, b), \label{thm1-vep1}\\
&u_0(x) \le v_{\varepsilon, 0}(x) + \varepsilon \quad \text{for} \; \; x \in (-b, b), \; \; \varepsilon > 0, \label{thm1-vep2}\\
&v_{\varepsilon_2, 0}(x) \le v_{\varepsilon_1, 0}(x) + \varepsilon_1 + \varepsilon_2 \quad \text{for} \; \; x \in (-b, b), \; \; \varepsilon_1 \ge \varepsilon_2 > 0. \label{thm1-vep3}
\end{align}
We also note that 
\begin{align*} 
&v_{\varepsilon, 0}(x) \approx (\min\{b-x, b+x\})^{-\hat{\gamma}}, \\ 
&|(v_{\varepsilon, 0})_x(x)| \approx (\min\{b-x, b+x\})^{-\hat{\gamma}-1}, \\
&(v_{\varepsilon, 0})_{xx}(x) \approx (\min\{b-x, b+x\})^{-\hat{\gamma}-2} 
\end{align*}
near the boundary points $x=\pm b$. 
Therefore, due to $\hat{\gamma} \ge \max\{\frac{2-\alpha}{\alpha-1}, 0\}$, we can see by a similar argument as in the proof of Lemma \ref{lem:initial-conti} that there exists $M_\varepsilon>0$ such that 
\[ f\left(2 g((v_{\varepsilon, 0})_x(x))(v_{\varepsilon, 0})_{xx}(x)\right) \le M_\varepsilon \quad \text{for} \; \; x \in (-b, b). \]
Thus, letting 
\begin{equation}\label{const-v-ep-4}
v_\varepsilon(x,t) := v_{\varepsilon, 0}(x) + \varepsilon + M_\varepsilon t \quad  \text{for} \; \;  (x,t) \in (-b, b) \times [0, \infty), 
\end{equation}
we have 
\[ (v_\varepsilon)_t \ge \max\{f(2g((v_\varepsilon)_x)(v_\varepsilon)_{xx}), 0\} \quad \text{on} \; \; (-b, b) \times (0, \infty). \]

We next construct a sequence of sub-solutions $w_\varepsilon$. 
From $\lim_{x \to \pm b} u_0(x) = \infty$, we can choose $\tilde{w}_{\varepsilon, 0} \in C(\mathbb{R})$ as  
\[ \tilde{w}_{\varepsilon, 0}(x) = \min\left\{u_0, \frac{1}{\varepsilon} \right\} \; \; \text{for} \; \; x \in (-b, b), \quad \tilde{w}_{\varepsilon, 0}(x) = \frac{1}{\varepsilon} \; \; \text{for} \; \; |x| \ge b \]
and also $w_{\varepsilon, 0} \in C^\infty(\mathbb{R})$ satisfying 
\[ |w_{\varepsilon, 0}(x) - \tilde{w}_{\varepsilon, 0}(x)| \le \varepsilon \quad \text{for} \; \; x \in [-b, b]. \]
Then, we can define 
\[ m_\varepsilon := \min_{x \in [-b, b]} f(g((w_{\varepsilon, 0})_x(x)) (w_{\varepsilon, 0})_{xx}(x)), \]
and we can construct a sub-solution $w_\varepsilon$ as 
\[ w_\varepsilon(x,t) := w_{\varepsilon, 0}(x) - \varepsilon + m_\varepsilon t \quad \text{for} \; \; (x,t) \in (-b, b) \times [0, \infty). \]
We note that it holds that 
\begin{align}
& \lim_{\varepsilon \to 0} w_{\varepsilon, 0}(x) = u_0(x) \quad \text{for} \; \; x \in (-b, b), \label{thm1-wep1} \\
& w_{\varepsilon, 0}(x) \le u_0(x) + \varepsilon \quad \text{for} \; \; x \in (-b, b), \; \; \varepsilon > 0, \label{thm1-wep2} \\
& w_{\varepsilon_1, 0} (x) \le w_{\varepsilon_2, 0}(x) + \varepsilon_1 + \varepsilon_2 \quad \text{for} \; \; x \in (-b, b), \; \; \varepsilon_1 \ge \varepsilon_2 > 0. \label{thm1-wep3}
\end{align}

We now define $u_\varepsilon : (-b, b) \times [0, \infty) \to \mathbb{R}$ as 
\[ u_\varepsilon(x,t) := \sup\{u(x,t): u \; \text{is a sub-solution to \eqref{flow eq} satisfying} \; w_\varepsilon \le u \le v_\varepsilon\}. \]
Notice that \eqref{thm1-vep2} and \eqref{thm1-wep2} yield 
\begin{equation}\label{thm1-com-initial} 
w_\varepsilon(\cdot, 0) = w_{\varepsilon, 0} - \varepsilon \le u_0 \le v_{\varepsilon, 0} + \varepsilon = v_\varepsilon(\cdot, 0)  \quad \text{on} \; \; (-b, b) 
\end{equation}
and the comparison result in Theorem \ref{thm:com1} further implies $w_\varepsilon \le v_\varepsilon$ on $(-b, b) \times [0, \infty)$. 
Then, the standard Perron method (cf.\ \cite{CIL}) yields that $u_\varepsilon$ is a viscosity solution to \eqref{flow eq}. 
Due to \eqref{thm1-vep3}, \eqref{thm1-wep3} and the comparison results, we have 
\[ w_{\varepsilon_1}(x,t) - 2\varepsilon_1 \le u_{\varepsilon_2}(x,t) \le (u_{\varepsilon_2})^*(x,t) \le v_{\varepsilon_1}(x,t) + 2\varepsilon_2 \quad \text{for} \; \; (x,t) \in (-b, b) \times [0,\infty) \]
if $\varepsilon_1 \ge \varepsilon_2 > 0$. 
Therefore, we can take the relaxed half limit of $(u_\varepsilon)^*$ as 
\[ u(x,t) := \limsups_{\varepsilon \to 0} (u_\varepsilon)^*(x,t) = \lim_{\delta \to 0} \sup \{(u_\varepsilon)^*(y,s) : (y,s) \in Q_0, |x-y| + |t-s| + \varepsilon \le \delta\} \]
for $(x,t) \in (-b, b) \times [0, \infty) = Q_0$. 
By the standard stability result (cf.\ \cite{CIL}) for viscosity solution, we can see that $u$ is a sub-solution to \eqref{flow eq}. 
We can also obtain 
\begin{equation}\label{thm1-order} 
w_\varepsilon(x,t)  \le u(x,t) \le v_\varepsilon(x,t)  \quad \text{for} \; \; (x, t) \in (-b, b) \times [0, \infty), \; \; \varepsilon > 0, 
\end{equation}
which yields by \eqref{thm1-vep1} and \eqref{thm1-wep1} 
\[ u_*(\cdot, 0) = u(\cdot, 0) =u^*(\cdot, 0) = u_0 \quad \text{in} \; \; (-b, b). \]
Therefore, it is sufficient to prove that $u_*$ is a super-solution to \eqref{flow eq} and \eqref{bdry} on $Q_0$. 

Letting 
\begin{equation}\label{def-limit-vw} 
v(x,t) := \inf \{v_\varepsilon(x,t): \varepsilon > 0\}, \quad w(x,t) := \sup \{w_\varepsilon(x,y): \varepsilon > 0\}, 
\end{equation}
\eqref{thm1-order} yields $w \le u \le v$ on $(-b,b) \times [0, \infty)$. 
We note that $w^*$ is also a sub-solution to \eqref{flow eq} due to the standard stability result. 
At the initial time $t=0$, the properties \eqref{thm1-vep1}, \eqref{thm1-vep2}, \eqref{thm1-wep1} and \eqref{thm1-wep2} yield $v(\cdot, 0) = w(\cdot, 0) = u_0$ on $(-b, b)$. 
Therefore, by the comparison result in Theorem \ref{thm:com-conti}, we have $w \le v_\varepsilon$ on $Q_0$ for any $\varepsilon > 0$. 
Due to the continuity of $v_\varepsilon$, we get $w^* \le v_\varepsilon$ on $Q_0$, which yields 
\begin{equation}\label{order-limit-vw} 
w^*(x,t) \le v(x,t) \quad \text{for} \; \; (x,t) \in Q_0 
\end{equation}
by the definition of $v$. 

Furthermore, we can see that 
\begin{align}\label{thm1-sup-con} 
u(x,t) = \sup \{\phi(x,t) : \phi \; \text{is a sub-solution to \eqref{flow eq} satisfying} \; w \le \phi \le v\}. 
\end{align}
Indeed, assume that there exists a sub-solution $\phi$ to \eqref{flow eq} satisfying $\phi \le v$ and $\phi(y,s) > u(y,s)$ at some $(y, s) \in (-b, b) \times [0, \infty)$. 
Then, since $\phi(\cdot, 0) = u_0$, the estimate \eqref{thm1-com-initial} and the comparison results yield $w_\varepsilon \le \phi \le v_\varepsilon$ on $(-b, b) \times [0, \infty)$ for $\varepsilon > 0$. 
Therefore, since $\phi$ is a sub-solution to \eqref{flow eq}, due to the definition of $u_\varepsilon$, we have $\phi \le u_\varepsilon \le (u_\varepsilon)^*$ on $Q_0$. 
On the other hand, due to the definition of $u$, we have 
\[ \liminf_{\varepsilon \to 0} \phi(y,s) - (u_\varepsilon)^*(y,s) \ge \phi(y,s) - u(y,s) > 0, \]
which contradicts to $\phi \le (u_\varepsilon)^*$ on $Q_0$ for $\varepsilon > 0$. 
Therefore, the claim holds; thus the standard Perron method implies that $u_*$ is a super-solution to \eqref{flow eq}. 

We next prove that the boundary condition $\lim_{x \to \pm b} u_*(x,t) = \infty$ holds for $t > 0$. 
Taking the lower convex envelope of $u_0$ as 
\[ \tilde{u}_0(x) := \sup\{\phi(x) : \phi \; \text{is convex and} \; \phi \le u_0 \; \text{on} \; (-b, b)\}. \]
Then, $\tilde{u}_0$ is continuous, convex and satisfies 
\[ \lim_{x \to \pm b} \tilde{u}_0 = \infty. \]
Therefore, letting 
\[ \tilde{u}(x, t) := \tilde{u}_0 (x) \quad \text{for} \; \; (x,t) \in Q_0, \]
due to \eqref{thm1-vep2}, the comparison result in Theorem \ref{thm:com-conti} shows that $\tilde{u} \le v_{\varepsilon}$ on $Q_0$. 
By the definition of $v$, we obtain $\tilde{u} \le v$ on $Q_0$. 
Therefore, due to \eqref{order-limit-vw}, letting 
\[ \tilde{w}(x,t) := \max\{w^*(x,t), \tilde{u}(x,t)\} \quad \text{for} \; \; (x,t) \in Q_0, \]
we can see that $\tilde{w}$ is a sub-solution to \eqref{flow eq} satisfying $w \le \tilde{w} \le v$ on $Q_0$. 
Therefore, the property \eqref{thm1-sup-con} and the definition of $\tilde{w}$ yield $\tilde{u} \le \tilde{w} \le u$ on $Q_0$. 
The continuity of $\tilde{u}$ thus shows that $\tilde{u} \le u_*$ on $Q_0$, which yields $\lim_{x \to \pm b} u_*(x,t) = \infty$ for $t \in [0, \infty)$.
\end{proof}

\subsection{Existence for $1<\alpha \le 2$ under the assumption (B1)} \label{subsec:exists2}

In this section, we prove the existence of the solution when $1 < \alpha \le 2$ under the assumption (B1). 
The strategy to prove the existence is similar to it in Section \ref{subsec:exists1}. 
However, in the proof of Theorem \ref{thm:existence} and Theorem \ref{thm:existence2} under the assumption (B2), the divergence property 
\[ \lim_{x \to \pm b} u_0(x) = \infty \]
have been used to construct sub-solutions $w_\varepsilon$ and $\tilde{u}$. 

Under the assumption (B1), we approximate $u_0$ from below with a bounded and smooth function $w_{0, \varepsilon} \in C^\infty([-b,b])$ due to the continuity of $u_0$ up to the boundary points $x=\pm b$ and construct a traveling wave sub-solution $w_\varepsilon$ from $w_{0, \varepsilon}$. 
In order to prove that a solution constructed by Perron's method and the standard stability argument for viscosity solutions satisfies the boundary condition \eqref{bdry}, we will use the sequence of sub-solutions $\tilde{u}_k$ constructed in \cite[Section 4.2]{KL} instead of $\tilde{u}$ in Section \ref{subsec:exists1}. 
We note that these constructions of the above sub-solutions $w_\varepsilon$ and $\tilde{u}_k$ can be applied also when $\alpha = 1$ or $\alpha < 1$ and $\beta < \frac{1}{1-\alpha}$, and thus, we will use these constructions in Section \ref{subsec:exist3}. 
We here summarize the definition and the properties of the sequence $\tilde{u}_k$ constructed in \cite[Section 4.2]{KL} below. 

We first state the definition of the sequence. 

\begin{defi}[{\cite[Section 4.2]{KL}}]\label{defi:sub by KL}
Let $b>0$. Assume $f$ and $g$ satisfy (A1) and (A2) with $\alpha \le 2$. 
Then, there exist $s_0 > 0$ and $M>0$ such that 
\[ g(s) \ge 2M|s|^{-2} \quad \text{for} \; \; |s| \ge s_0. \]
For sufficiently large $k>0$, let 
\[ r_k := \sqrt{\frac{1+k^2}{k^2}} b \]
and $y_k \in (0, e^{-1})$ is chosen to satisfy 
\[ \frac{1}{y_k \log y_k} = -k. \]
Define $x(k,t)$ by 
\[ x(k,t) = \exp(-\exp \{f(-M \log y_k) t + \log(-\log y_k)\}) + b - y_k \quad \text{for} \; \; t \ge 0.  \]
Define a function $\tilde{u}_k: [-b, b] \times [0, \infty) \to \mathbb{R}$ by 
\[ \tilde{u}_k(x,t) := \begin{cases}
\log\left( \frac{\log\{x(k,t) - x + y_k\}}{\log y_k} \right) - \sqrt{r_k^2 - x(k, t)^2} & \text{for} \; \; x \in (x(k, t), b], \\
-\sqrt{r_k^2 - x^2} & \text{for} \; \; x \in [-x(k, t), x(k, t)], \\
\log \left( \frac{\log\{x(k,t) + x + y_k\}}{\log y_k} \right) - \sqrt{r_k^2 - x(k, t)^2} & \text{for} \; \; x \in [-b, -x(k,t))
\end{cases} \] 
for $t \ge 0$. 
\end{defi}

We then can obtain the following properties on the sequence. 

\begin{lem}[{\cite[Lemma 4.8]{KL}}]\label{lem:div-KL}
Let $b>0$. Assume $f$ and $g$ satisfy (A1) and (A2) with $\alpha \le 2$. 
Then, the function $\tilde{u}_k$ defined by Definition \ref{defi:sub by KL} is continuous in $\overline{Q}_0$ and a sub-solution to \eqref{flow eq} for sufficiently large $k>0$. 
Furthermore, $\tilde{u}_k(\cdot, 0)$ is uniformly bounded on $[-b, b]$ and 
\[ \lim_{k \to \infty} \tilde{u}_k(\pm b, t) = \infty \quad \text{for} \; \; t > 0. \]
\end{lem}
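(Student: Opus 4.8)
The plan is to verify the lemma's four assertions about the explicit function $\tilde u_k$ from Definition \ref{defi:sub by KL} in turn — continuity on $\overline{Q}_0$, uniform boundedness of $\tilde u_k(\cdot,0)$, the divergence $\tilde u_k(\pm b,t)\to\infty$ as $k\to\infty$ for each fixed $t>0$, and the sub-solution property on $Q$ for all large $k$ — of which only the last needs real work. The first move I would make is to record the simplification $x(k,t)-x+y_k=\zeta(t)+b-x$, where $\zeta(t):=\exp(-\exp\{f(-M\log y_k)t+\log(-\log y_k)\})$; this also gives $x(k,t)=\zeta(t)+b-y_k$ (hence $\dot x(k,t)=\dot\zeta(t)$) and $\zeta(0)=y_k$, so $x(k,0)=b$ and $\tilde u_k(\cdot,0)=-\sqrt{r_k^2-x^2}$ on $[-b,b]$ — smooth there since $r_k>b$, and bounded by $r_k\le\sqrt2\,b$ uniformly in $k\ge1$, which is the uniform boundedness of the initial data. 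Continuity across the curves $x=\pm x(k,t)$ is immediate because the outer branch reduces to $-\sqrt{r_k^2-x(k,t)^2}$ there, matching the middle branch, and all branches depend continuously on $(x,t)$. For the divergence, setting $x=b$ collapses the outer formula to $\tilde u_k(b,t)=f(-M\log y_k)\,t-\sqrt{r_k^2-x(k,t)^2}$; since $1/(y_k\log y_k)=-k$ forces $y_k\to0$, we get $-M\log y_k\to\infty$, hence $f(-M\log y_k)\to\infty$ by (A1), while $\sqrt{r_k^2-x(k,t)^2}\le\sqrt2\,b$, so $\tilde u_k(b,t)\to\infty$ for each fixed $t>0$; the point $x=-b$ is symmetric.

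For the sub-solution property I would partition $Q$ into the middle strip $|x|<x(k,t)$, the two outer regions $x(k,t)<|x|<b$, and the two kink curves $x=\pm x(k,t)$. On the middle strip $\tilde u_k=-\sqrt{r_k^2-x^2}$ is convex in $x$ and $t$-independent, hence a classical (so viscosity) sub-solution: $u_t=0$ while $f(g(u_x)u_{xx})\ge f(0)=0$ since $g>0$ and $u_{xx}>0$. Along a kink curve $x=x(k,t)$ with $t>0$, $\tilde u_k$ is continuous but its $x$-slope jumps — from the middle side it equals $x(k,t)/\sqrt{r_k^2-x(k,t)^2}$ and from the outer side it equals $-1/(y_k\log y_k)=k$ by the choice of $y_k$ — and since $x(k,t)<b$ for $t>0$ and $r_k^2-x(k,t)^2\ge r_k^2-b^2=b^2/k^2$ (this is exactly why $r_k=\sqrt{(1+k^2)/k^2}\,b$), the middle-side slope is strictly below $k$, a convex corner; hence $\tilde u_k-\phi$ cannot attain a local maximum there for any $\phi\in C^2$ (the one-sided $x$-derivatives would force $k\le\phi_x\le x(k,t)/\sqrt{r_k^2-x(k,t)^2}$, impossible), so the sub-solution inequality holds vacuously along these curves.

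The substantive case is the outer regions, say $x(k,t)<x<b$ with $t>0$, where $\tilde u_k$ is $C^2$ and $u_t-f(g(u_x)u_{xx})\le0$ can be checked classically. Writing $\xi:=x(k,t)-x+y_k=\zeta(t)+b-x$, one has $0<\zeta(t)<\xi<y_k<e^{-1}$, and direct computation gives $u_x=-1/(\xi\log\xi)$, $u_{xx}=-(\log\xi+1)/(\xi\log\xi)^2>0$, and $u_t=\dot\zeta(t)\big(\tfrac{1}{\xi\log\xi}+\tfrac{x(k,t)}{\sqrt{r_k^2-x(k,t)^2}}\big)$ with $\dot\zeta(t)=\zeta(t)\log\zeta(t)\,f(-M\log y_k)<0$. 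Since $s\mapsto-1/(s\log s)$ decreases on $(0,e^{-1})$ we get $u_x\ge-1/(y_k\log y_k)=k$; combined with the kink inequality $x(k,t)/\sqrt{r_k^2-x(k,t)^2}<k$ this makes the parenthesis negative, so $0<u_t\le|\dot\zeta(t)|\,u_x=f(-M\log y_k)\,\tfrac{\zeta(t)\log\zeta(t)}{\xi\log\xi}\le f(-M\log y_k)$, the last step using $0<\zeta(t)\le\xi<e^{-1}$ and the monotonicity of $s\mapsto-s\log s$ there. On the other hand the degeneracy bound $g(s)\ge2M|s|^{-2}$ for $|s|\ge s_0$ from Definition \ref{defi:sub by KL} (where the hypothesis $\alpha\le2$ is used), together with $u_x\ge k\ge s_0$ for $k$ large, yields $g(u_x)u_{xx}\ge2Mu_x^{-2}u_{xx}=-2M(\log\xi+1)\ge-2M\log y_k-2M\ge-M\log y_k$ once $k$ is large enough that $y_k\le e^{-2}$. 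Hence $f(g(u_x)u_{xx})\ge f(-M\log y_k)\ge u_t$, the desired inequality; the region $-b<x<-x(k,t)$ is symmetric, and combining the cases shows $\tilde u_k$ is a viscosity sub-solution of \eqref{flow eq} on $Q$ for all sufficiently large $k$.

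The main obstacle is precisely this outer-region estimate, and it makes clear why the exact shapes of $r_k$, $y_k$ and the evolution $x(k,t)$ in Definition \ref{defi:sub by KL} are needed: two cancellations must survive simultaneously — the bound $u_t\le f(-M\log y_k)$, resting on $\dot x(k,t)=\dot\zeta(t)$, on $0<\zeta(t)\le\xi$, and on the kink inequality $x(k,t)/\sqrt{r_k^2-x(k,t)^2}<k$ forced by $r_k^2=(1+k^2)b^2/k^2$, and the bound $g(u_x)u_{xx}\ge-M\log y_k$, resting on the strong-degeneracy estimate $g(s)\ge2M|s|^{-2}$ (hypothesis $\alpha\le2$) and on $y_k\le e^{-2}$. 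A secondary point needing care is that $\tilde u_k$ is only piecewise $C^2$, so the viscosity rather than classical notion is genuinely used along the kink curves $x=\pm x(k,t)$, where the convex-corner structure is what rescues the inequality.
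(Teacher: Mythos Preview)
The paper does not supply its own proof of this lemma; it is quoted verbatim as \cite[Lemma~4.8]{KL} and left unproved here. Your argument is a correct and complete reconstruction of the verification: the continuity, the uniform initial bound, and the boundary divergence are immediate from the explicit formula once one observes $x(k,0)=b$ and $\tilde u_k(b,t)=f(-M\log y_k)\,t-\sqrt{r_k^2-x(k,t)^2}$, and your treatment of the sub-solution property---trivial on the middle arc, vacuous at the convex kinks (the strict inequality $x(k,t)/\sqrt{r_k^2-x(k,t)^2}<k$ for $t>0$ coming precisely from $r_k^2-b^2=b^2/k^2$ and $x(k,t)<b$), and on the outer region via the two matching bounds $u_t\le f(-M\log y_k)$ and $g(u_x)u_{xx}\ge -M\log y_k$---is exactly the computation that the specific choices of $r_k$, $y_k$, and $x(k,t)$ in Definition~\ref{defi:sub by KL} are engineered to make work.
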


\begin{rmk}
Since $\tilde{u}_k$ is of class $C(\overline{Q}_0)$, the function $\tilde{u}_k(\cdot, t)$ is bounded on $[-b, b]$ for any $t>0$. 
Therefore, we can apply the comparison result in Theorem \ref{thm:com1} to $\tilde{u}_k$. 
We can also easily see by the definition of $\tilde{u}_k$ that 
\[ \lim_{k \to \infty} \tilde{u}_k(x, 0) = \lim_{k \to \infty} -\sqrt{b_k^2 - x^2} = -\sqrt{b^2 - x^2} \quad \text{for} \; \; x \in [-b, b]. \]
\end{rmk}

Let us now prove Theorem \ref{thm:existence2} under the assumption (B1). 

\begin{proof}[Proof of Theorem \ref{thm:existence2} under the assumption (B1)]
We prove the existence of the solution stated in Theorem \ref{thm:existence2} assuming that the initial function $u_0$ satisfies (B1). 
Let $\hat{\gamma} := \frac{2-\alpha}{\alpha - 1} + 1 > 0$. 
Since $u_0$ is of class $C([-b, b])$, $u_0$ satisfies 
\[ \lim_{x \to \pm b} u_0(x) (b\mp x)^{\hat{\gamma}} = 0. \]
Then, for $\varepsilon > 0$, the super-solution $v_\varepsilon$ to \eqref{flow eq} and \eqref{bdry} defined by \eqref{const-v-ep-4} can be constructed in the same way in the proof of Theorem \ref{thm:existence} and Theorem \ref{thm:existence2} under the assumption (B2). 

We next construct a sequence of sub-solutions $w_\varepsilon$. 
Due to $u_0 \in C([-b, b])$, we can choose $w_{\varepsilon, 0} \in C^\infty([-b, b])$ satisfying 
\[ |w_{\varepsilon, 0}(x) - u_0(x)| \le \varepsilon \quad \text{for} \; \; x \in [-b, b]. \]
Then, we can define 
\[ m_\varepsilon := \min_{x \in [-b, b]} f(g((w_{\varepsilon, 0})_x(x)) (w_{\varepsilon, 0})_{xx}(x)), \]
and we can construct a sub-solution $w_\varepsilon$ as 
\[ w_\varepsilon(x,t) := w_{\varepsilon, 0} - \varepsilon + m_\varepsilon t \quad \text{for} \; \; (x,t) \in (-b, b) \times [0, \infty). \]
We can see that $w_\varepsilon$ satisfies \eqref{thm1-wep1}--\eqref{thm1-wep3} by simple calculations. 

Therefore, we can obtain a solution $u_\varepsilon$ between $v_\varepsilon$ and $w_\varepsilon$ as 
\[ u_\varepsilon(x,t) := \sup\{u(x,t): u \; \text{is a sub-solution to \eqref{flow eq} satisfying} \; w_\varepsilon \le u \le v_\varepsilon\}, \]
and also, by the standard stability argument as in the proof of Theorem \ref{thm:existence} and Theorem \ref{thm:existence2} under the assumption (B2), we can construct a solution $u$ to \eqref{flow eq} satisfying \eqref{initial} in the viscosity sense by taking the relaxed half limit of $(u_\varepsilon)^*$. 
We note that the solution $u$ satisfies 
\begin{equation} \label{thm2-sup-con}
u(x,t) = \sup \{\phi(x,t) : \phi \; \text{is a sub-solution to \eqref{flow eq} satisfying} \; w \le \phi \le v\}, 
\end{equation}
where $v$ and $w$ are functions given by \eqref{def-limit-vw} for $v_\varepsilon$ and $w_\varepsilon$ of this proof. 

We next prove that the boundary condition $\lim_{x \to \pm b} u_*(x,t) = \infty$ holds for $t>0$. 
For $k > 0$ large, let $\tilde{u}_k$ be the sequence of sub-solutions defined by Definition \ref{defi:sub by KL}. 
Due to the uniformly boundedness of $\tilde{u}_k(\cdot, 0)$, there exists $D \in \mathbb{R}$ such that $\tilde{u}_k(\cdot, 0) + D \le u_0$ on $[-b, b]$ for any $k > 0$ large. 
Therefore, the comparison result in Theorem \ref{thm:com-conti} (or Theorem \ref{thm:com1}) shows that $\tilde{u}_k + D \le v_{\varepsilon}$ on $Q_0$.
By the definition of $v$, we have $\tilde{u}_k + D \le v$ on $Q_0$. 
Thus, letting $\tilde{w}_k(x,t) := \max\{w^*(x,t), \tilde{u}_k(x,t) + D\}$ for $(x,t) \in Q_0$, we have $\tilde{u}_k + D \le \tilde{w} \le u$ on $Q_0$ due to \eqref{thm2-sup-con}. 
Since $\tilde{u}_k$ is continuous on $\overline{Q}_0$, we have 
\[ \tilde{u}_k (\pm b, t) + D \le \liminf_{x \to \pm b} u_*(x,t) \quad \text{for} \; \; t > 0 \; \text{and}  \; k > 0 \; \text{large}. \]
Letting $k \to \infty$, Lemma \ref{lem:div-KL} yields that $u_*$ satisfies the boundary condition. 
\end{proof}

\subsection{Existence for $\alpha = 1$ or $\alpha < 1$ and $\beta < \frac{1}{1-\alpha}$} \label{subsec:exist3}

In this section, we prove the existence result (b) in Theorem \ref{thm:non-existence}. 
The proof method in this section is almost identical to it in Section \ref{subsec:exists1} and \ref{subsec:exists2}, differing only in the construction of the super-solutions $v_\varepsilon$. 
Sub-solutions $w_\varepsilon$ to construct the solution by Perron's method and the standard stability argument for viscosity solutions and sub-solution(s) $\tilde{u}$ (or $\tilde{u}_k$) to prove that the solution satisfies the boundary condition \eqref{bdry} can be constructed as in Section \ref{subsec:exists1} when (B2) is assumed and as in Section \ref{subsec:exists2} when (B1) is assumed. 
Therefore, we give a heuristically argument only to construct the super-solutions $v_\varepsilon$ below and we only rigorously construct the super-solutions in the proof to omit the rest of arguments. 

In the proof, we will construct a super-solution to \eqref{flow eq} and \eqref{bdry} formed by, for example near the boundary point $x=b$, $v(x,t) = (b-x)^{-L(t)}$ with increasing function $L(t)$. 
Substituting this function into the inequality $v_t \ge f(g(v_x)v_{xx})$, due to the assumptions (A2) and (A3), we obtain the approximated inequality 
\[ \frac{L'(t)}{(L(t))^{\beta(1-\alpha)} (L(t) + 1)^\beta} \gtrsim \frac{1}{\log \frac{1}{b-x}} (b-x)^{L(t)(1-\beta(1-\alpha)) - \beta(2-\alpha)} \quad \text{if} \; 0 < b-x \ll 1. \]
From $1 - \beta(1-\alpha) > 0$, the right hand side is bounded from above if $L(t)$ is sufficiently large, and thus if $L(t)$ satisfies the ordinary differential inequality 
\[ L'(t) \ge M (L(t))^{\beta(1-\alpha)} (L(t) + 1)^\beta \]
for some $M>0$, then we can see that $v(x,t)$ is a super-solution near the boundary point $x=b$. 
A similar argument can be applied near the opposite $x=-b$. 

To construct the solution to the initial value problem using Perron's method, we need to modify the above discussion to construct a super-solution starting from a given initial function. 
We first construct a family of super-solutions so that we can choose initial function of the super-solution later, which formed as follows: 

\begin{defi}\label{definition-super1}
Let $T>0$. 
For functions $v_0 \in C^2([-b, b])$ and $L \in C^1([0, T])$, and constants $\mu \ge 0$ and $\nu, c>0$, we define $v(x,t)$ as 
\begin{align*} 
&v(x,t) := \\
&\left\{ \begin{aligned}
&\begin{aligned}
(L(t))^\mu(2 - \tfrac{2x}{b})^{-L(t)} - (L(0))^\mu \left(\tfrac{2}{3}\right)^{-L(0)} +&\; v_0(x) + ct +\tfrac{1}{T-t} - \tfrac{1}{T} \\
& \text{for} \; \tfrac{2b}{3} \le x < b, \; 0 \le t < T, 
\end{aligned} \\
&\begin{aligned}
-\tfrac{1}{\sqrt{\nu}} \sqrt{r_\nu^2 - x^2} + \tfrac{2b}{3\nu \sqrt{\nu}} + (L(t))^\mu \left(\tfrac{2}{3}\right)^{-L(t)}&\; - (L(0))^\mu \left(\tfrac{2}{3} \right)^{L(0)} + v_0(x) + ct + \tfrac{1}{T-t} - \tfrac{1}{T}  \\
&\text{for} \; -\tfrac{2b}{3} < x < \tfrac{2b}{3}, \; 0 \le t < T,
\end{aligned}\\
&\begin{aligned}
(L(t))^\mu(2 + \tfrac{2x}{b})^{-L(t)} - (L(0))^\mu \left(\tfrac{2}{3}\right)^{-L(0)} +&\; v_0(x) + ct +\tfrac{1}{T-t} - \tfrac{1}{T} \\
& \text{for} \; - b < x \le - \tfrac{2b}{3}, \; 0 \le t < T,
\end{aligned}
\end{aligned} \right. 
\end{align*}
where $r_\nu := \frac{2b}{3}\sqrt{1+\frac{1}{\nu^2}}$. 
\end{defi}

\begin{rmk}
We here note the value of $v(x, 0)$ at $x = \pm \frac{2b}{3}$ below. 
At the point $x= \frac{2b}{3}$, we have 
\[ 2 - \frac{2x}{b} = \frac{2}{3}, \quad \sqrt{r_\nu^2 - x^2} = \frac{2b}{3\nu}, \]
which yields the continuity of $v(\cdot, 0)$ at $x=\frac{2b}{3}$ and $v(\frac{2b}{3}, 0) = v_0(\frac{2b}{3})$. 
By calculating similarly at the opposite point $x = - \frac{2b}{3}$, we can see that $v(\cdot, 0)$ is continuous at $x=-\frac{2b}{3}$ and $v(- \frac{2b}{3}, 0) = v_0(-\frac{2b}{3})$. 
We can also see that $v$ is continuous also at $(\pm \frac{2b}{3}, t)$ for $t > 0$. 
\end{rmk}

We have to choose $T, \nu, c>0$, $\mu \ge 0$ and $L(t)$ so that $v$ is a super-solution as follows: 

\begin{prop}\label{prop:upper-esti-smalla}
Let $b > 0$ and $v_0 \in C^2([-b, b])$. 
Assume that a function $f$ satisfies (A1) and (A3) and a function g satisfies (A2). 
Assume also $\alpha = 1$ or $\alpha < 1$ and $0 < \beta < \frac{1}{1-\alpha}$. 
Then, letting 
\begin{equation}\label{defi-mu} 
\mu = \max \left\{0, \frac{\beta(2-\alpha) - 1}{1 - \beta(1-\alpha)}\right\}, 
\end{equation}
for any constant $L_0$ and $\nu$ satisfying 
\begin{equation}\label{cond-del0} 
L_0 > \frac{1-\beta(1-\alpha)}{\beta(2-\alpha)}, \quad \nu > \frac{8b L_0^{2\mu + 2}}{9} \left(\frac{3}{2}\right)^{2L_0+2}
\end{equation} 
there exist constants $T, c> 0$ and a function $L(t)$ such that $L(0) = L_0$ and the function $v$ defined in Definition \ref{definition-super1} satisfies
\begin{equation}\label{aim-sub-ine} 
v_t \ge \max\{f(2g(v_x)v_{xx}), 0\} \quad \text{on} \; \; (-b, b) \times (0, T) 
\end{equation}
in the viscosity sense. 
Furthermore, $T \to \infty$ as $\nu \to \infty$. 
\end{prop}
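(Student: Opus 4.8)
The overall strategy is to verify \eqref{aim-sub-ine} in the viscosity sense by checking it classically on the open interiors of the three regions in Definition \ref{definition-super1}, where $v$ is $C^2$, and by arguing that the supersolution requirement is vacuous at the two interfaces $x=\pm\tfrac{2b}{3}$. For the interfaces I would compute the one-sided $x$-derivatives of $v$: using $\sqrt{r_\nu^2-(2b/3)^2}=\tfrac{2b}{3\nu}$ (which also makes $v(\cdot,t)$ continuous across them), the slope from the middle piece is $\sqrt\nu+v_0'(\pm\tfrac{2b}{3})$ while the slope from the outer piece is $\tfrac{2}{b}L(t)^{\mu+1}(\tfrac{3}{2})^{L(t)+1}+v_0'(\pm\tfrac{2b}{3})$. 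The condition \eqref{cond-del0} on $\nu$ is tailored so that the former strictly exceeds the latter at $t=0$, and I will keep $L(t)$ small enough that this persists for all $t\in[0,T]$; then $x\mapsto v(x,t)$ has a strictly concave corner at each interface, so $v-\phi$ cannot have a local minimum there for any $\phi\in C^2$ and the condition holds automatically. On the middle region $v_x$ and $v_{xx}$ lie in a bounded set depending only on $\nu$ and $\|v_0\|_{C^2}$, over which $g$ is positive and bounded, so $f(2g(v_x)v_{xx})\le M_0(\nu)$; since $v_t\ge c+(T-t)^{-2}\ge c$ once $L$ is chosen nondecreasing, taking $c\ge M_0(\nu)$ gives \eqref{aim-sub-ine} (including $v_t\ge0$) there.

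The heart of the matter is the outer regions, which by the $x\mapsto-x$ symmetry I reduce to $[\tfrac{2b}{3},b)\times(0,T)$; set $\rho:=2-\tfrac{2x}{b}\in(0,\tfrac{2}{3}]$. Differentiating gives $v_t\ge L^{\mu-1}L'\rho^{-L}(\mu+L\log\tfrac{1}{\rho})\ge L^\mu L'\rho^{-L}\log\tfrac{1}{\rho}$, $v_x=\tfrac{2L^{\mu+1}}{b}\rho^{-L-1}+v_0'$, and $v_{xx}=\tfrac{4L^{\mu+1}(L+1)}{b^2}\rho^{-L-2}+v_0''$. Where $v_x$ is large and positive — the whole region once $L$ passes a universal threshold, and otherwise at least for $\rho$ below a universal threshold — (A2) and (A3) give $2g(v_x)v_{xx}\sim C\,L^{(\mu+1)(1-\alpha)}(L+1)\rho^{L(\alpha-1)+\alpha-2}$ and hence $f(2g(v_x)v_{xx})\sim C'\,L^{\beta(\mu+1)(1-\alpha)}(L+1)^\beta\rho^{\beta(L(\alpha-1)+\alpha-2)}$, so that \eqref{aim-sub-ine} reduces to an inequality of the shape $\tfrac{L'}{\Lambda(L)}\gtrsim\tfrac{\rho^{E(L)}}{\log(1/\rho)}$ with $E(L)=L(1-\beta(1-\alpha))-\beta(2-\alpha)$ and $\Lambda(L)=L^{\beta(1-\alpha)}(L+1)^\beta$ when $\mu=0$, $\Lambda(L)=L^{1-\beta}(L+1)^\beta$ when $\mu>0$ (the choice \eqref{defi-mu} of $\mu$ is exactly what makes $\mu-\beta(\mu+1)(1-\alpha)$ equal to $-\beta(1-\alpha)$, resp.\ $\beta-1$). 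Here the hypothesis ``$\alpha=1$, or $\alpha<1$ and $\beta<\tfrac{1}{1-\alpha}$'' enters via $1-\beta(1-\alpha)>0$, which makes $E$ increasing; the lower bound on $L_0$ in \eqref{cond-del0} then forces $E(L(t))\ge0$ for all $t$, whence $\rho^{E(L)}\le1$ and $\log\tfrac{1}{\rho}\ge\log\tfrac{3}{2}$ on $(0,\tfrac{2}{3}]$, so the right-hand side is bounded by a constant $M$. On the remaining ``moderate'' part of the region — present only while $L$ is below the universal threshold, hence uniformly bounded — $v_x$ and $v_{xx}$ are universally bounded and \eqref{aim-sub-ine} again follows from $v_t\ge c$ after enlarging $c$. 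It remains to pick $L$: the ODE $L'(t)=M\Lambda(L(t))$, $L(0)=L_0$, has a nondecreasing global-in-time solution with $L(t)\to\infty$, since $\Lambda(L)\lesssim L^{\beta(2-\alpha)}$ with $\beta(2-\alpha)\le1$ when $\mu=0$ and $\Lambda(L)\lesssim L$ when $\mu>0$, so there is no finite-time blow-up.

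Finally I would take $T=T(\nu)>0$ to be (slightly below) the first time at which $\tfrac{2}{b}L(t)^{\mu+1}(\tfrac{3}{2})^{L(t)+1}$ reaches $\sqrt\nu$; this keeps the concave-corner condition at the interfaces valid on $[0,T]$, and since that threshold tends to $\infty$ with $\nu$ while $L(t)\to\infty$ along the fixed ODE, $T(\nu)\to\infty$ as $\nu\to\infty$. The constant $c$ is then chosen at the very end, larger than $M_0(\nu)$ and the universal bound from the moderate parts. I expect the main obstacle to be the outer-region step: upgrading the formal ``$\sim$'' to genuine inequalities that are uniform simultaneously in $\rho\in(0,\tfrac{2}{3}]$ and in $t\in[0,T]$ (equivalently $L\in[L_0,L(T)]$) — i.e.\ absorbing the $o(1)$ corrections in (A2)/(A3) and the $v_0',v_0''$ terms into the leading behavior and confirming the resulting requirement on $L$ is met by $L'=M\Lambda(L)$ — with constants independent of $T$, which is what legitimizes the limit $\nu\to\infty$.
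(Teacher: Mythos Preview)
Your proposal is correct and follows essentially the same approach as the paper: verify \eqref{aim-sub-ine} classically on the three smooth pieces, handle the outer region by reducing to an ODE $L'=C\,L^{\beta(1-\alpha)(1+\mu)-\mu}(L+1)^\beta$ via the asymptotics (A2)--(A3) and the positivity of $E(L)$ guaranteed by \eqref{cond-del0}, absorb the bounded interior/moderate zones into the choice of $c$, and show the interfaces $x=\pm\tfrac{2b}{3}$ are concave corners (hence vacuous for the supersolution test) for $t\le T$ with $T=T(\nu)\to\infty$. Your explanation of why the ODE for $L$ is globally solvable (at most linear growth of the right-hand side thanks to \eqref{defi-mu}) is in fact more explicit than the paper's, which simply asserts global solvability ``due to the choice of $\mu$.''
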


\begin{proof}
Due to the assumptions (A2) and (A3), there exist constants $L_g, M_{g+}, M_{g-}, L_f, M_f > 0$ such that 
\begin{align}
& M_{g-} |s|^{-\alpha} \le g(s) \le M_{g+} |s|^{-\alpha} \quad \text{for} \; \; |s| \ge L_g, \label{esti-g1}\\
& f(s) \le M_f s^{\beta} \quad \text{for} \; \; s \ge L_f. \label{esti-f1}
\end{align}
We will choose constants $c,T > 0$ and a increasing function $L(t)$ with $L(0) = L_0$ so that $v$ satisfies \eqref{aim-sub-ine} in the viscosity sense. 

We first choose $L(t)$ to satisfy \eqref{aim-sub-ine} at points $x$ near the boundary $x = \pm b$. 
We here let $d(x) := \min\{2-\frac{2x}{b}, 2+\frac{2x}{b}\}$. 
Due to $v_0 \in C^2([-b, b])$ and the definition of $v$, there exist $b_1 \in (\frac{2}{3}b, b)$ and $C_1>0$ depending only on $v_0$ and $L_0$ such that 
\begin{equation}\label{deri-positive} 
|v_x(x,t)| \ge C_1 (d(x))^{-L_0-1} \ge L_g \quad \text{for} \; \; b_1 \le |x| <b, \; \; t \ge 0 
\end{equation}
if $L(t)$ is increasing. 
We here note that \eqref{cond-del0} yields $L_0 > 0$. 
Therefore, \eqref{esti-g1} with $s = v_x(x,t)$ holds for $b_1 \le |x| < b$ and $t \ge 0$, and thus there exist $b_2 \in [b_1, b)$ and $C_2 > 0$ depending only on $v_0$ and $L_0$ such that 
\begin{align*} 
2g(v_x(x,t)) v_{xx}(x,t) \ge&\; 2M_{g-} (v_x (x,t))^{-\alpha} v_{xx}(x,t) \\
\ge&\; 2M_{g-} C_2 (d(x))^{-L_0(1-\alpha) - 2 + \alpha} \ge L_f \quad \text{for} \; \; b_2 \le |x| < b, \; \; t \ge 0
\end{align*}
if $L(t)$ is increasing. 
Notice that $- L_0(1-\alpha) - 2 + \alpha < 0$ from $\alpha \le 1$. 
Therefore, \eqref{esti-f1} with $s = g(v_x(x,t)) v_{xx}(x,t)$ holds for $b_2 \le |x| < b$ and $t \ge 0$. 
Using \eqref{esti-g1} and \eqref{esti-f1} again, we can see that there exist $b_3 \in [b_2, b)$ and $C_3 > 0$ such that 
\begin{align*}
f(2g(v_x(x,t)) v_{xx}(x,t)) \le&\; 2^\beta M_f (M_{g+})^\beta (v_x(x,t))^{-\alpha \beta} (v_{xx}(x,t))^\beta \\
\le&\;  C_3 (L(t))^{\beta(1-\alpha)(1+\mu)}(L(t)+1)^\beta (d(x))^{-L(t)\beta (1-\alpha) - \beta (2 - \alpha)} 
\end{align*}
for $b_3 \le |x| < b$ and $t \ge 0$. 
On the other hand, we have by a simple calculation 
\begin{align*} 
v_t(x,t) =&\; \mu(L(t))^{\mu-1}(d(x))^{-L(t)} + (d(x))^{-L(t)} (L(t))^\mu L'(t) \log \frac{1}{d(x)} + c + \frac{1}{(T-t)^2} \\
\ge&\; (d(x))^{-L(t)} L'(t) (L(t))^\mu \log \frac{3}{2} \ge 0 
\end{align*}
for $\frac{2b}{3} < |x| < b$ if $L(t)$ is increasing and $c$ is positive. 
Therefore, if we choose a increasing function $L(t)$ satisfying 
\[ \frac{L'(t)}{(L(t))^{\beta(1-\alpha)(1+\mu) - \mu} (L(t) + 1)^{\beta}} \ge C_3 (d(x))^{L(t)(1-\beta (1-\alpha)) - \beta (2 - \alpha)} \frac{1}{\log \frac{3}{2}} \]
for $b_3 \le |x| < b$ and $t>0$, then \eqref{aim-sub-ine} holds. 
From $1 - \beta(1-\alpha) > 0$ and \eqref{cond-del0}, if $L(t)$ is increasing, we have $L(t)(1-\beta (1-\alpha)) - \beta (2 - \alpha) > 0$, and it thus holds that 
\[ C_3 (d(x))^{L(t)(1-\beta (1-\alpha)) - \beta (2 - \alpha)} \frac{1}{\log \frac{3}{2}} \le \frac{C_3}{\log \frac{3}{2}} =: C_4 \quad \text{for} \; \; b_3 \le |x| < b, \; \; t > 0. \]
Therefore, we choose a function $L(t)$ as a solution to 
\begin{equation}\label{ode-delta} 
\begin{cases}
L'(t) = C_4 (L(t))^{\beta(1-\alpha)(1+\mu) -\mu} (L(t) + 1)^{\beta} & \text{for} \; t > 0, \\
L(0) = L_0. 
\end{cases} 
\end{equation}
We also let $T>0$ be a positive time satisfying 
\begin{equation}\label{choice-T} 
\nu = \frac{8b (L(T))^{2\mu+2}}{9} \left(\frac{3}{2}\right)^{2L(T) + 2}. 
\end{equation}
Then, \eqref{aim-sub-ine} holds for $b_3 \le |x| < b$ and $0 < t \le T$. 
We note that the ordinary differential equation \eqref{ode-delta} is solvable globally in time due to the choice of $\mu$ in \eqref{defi-mu}. 
Therefore, the constant $T$ diverges to $\infty$ as $\nu \to \infty$. 

We next choose $c>0$ to satisfy \eqref{aim-sub-ine} for $|x| \le b_3$ with $x \neq \pm \frac{2b}{3}$ and $0 < t \le T$. 
By the definition of $v$, we can see that 
\[ C_5 := \sup\left\{ |f(2g(v_x(x,t)) v_{xx}(x,t))| : |x| \le b_3 \; \text{with} \; x \neq \pm \frac{2b}{3}, \; \; 0 < t \le T\right\} < \infty. \]
Letting $c$ be positive and satisfies $c \ge C_5$, since it holds that $v_t(x,t) \ge c$ for $|x| \le b_3$ with $x\neq \frac{2b}{3}$ and $0<t < T$, we can easily see that \eqref{aim-sub-ine} holds at the points. 

At the singular points $x = \pm \frac{2b}{3}$, \eqref{choice-T} yields 
\begin{equation}
\begin{aligned}
& \lim_{x \uparrow -\frac{2b}{3}} v_x(x,t) = - \frac{2(L(t))^{\mu + 1}}{b} \left(\frac{3}{2}\right)^{L(t)+1} > - \frac{3\sqrt{\nu}}{2b} = \lim_{x \downarrow - \frac{2b}{3}} v_x(x,t),  \\
& \lim_{x \uparrow \frac{2b}{3}} v_x(x,t) = \frac{3\sqrt{\nu}}{2b} > \frac{2(L(t))^{\mu+1}}{b} \left(\frac{3}{2}\right)^{L(t)+1} = \lim_{x \downarrow \frac{2b}{3}} v_x(x,t) 
\end{aligned}
\end{equation}
for any $0 \le t < T$. 
Therefore, $v-\phi$ does not attain a local minimum at $x = \pm \frac{2b}{3}$ and $0 < t \le T$ for any $\phi \in C^2(Q)$. 
It shows that $v$ satisfies \eqref{aim-sub-ine} in the viscosity sense in $(-b, b) \times [0, T]$.
\end{proof}

Let us now prove (b) in Theorem \ref{thm:non-existence}. 

\begin{proof}[Proof of (b) in Theorem \ref{thm:non-existence}] 
We construct a sequence of super-solutions $v_\varepsilon$ to apply Perron's method. 
If $u_0 \in C([-b, b])$ then it holds that 
\begin{equation}\label{initial div2} 
\limsup_{x \to b} u_0(x) (b-x)^\gamma < \infty, \quad \limsup_{x \to -b} u_0(x) (b+x)^\gamma < \infty 
\end{equation}
for any $\gamma > 0$, and thus we fix $\gamma > 0$ arbitrary when (B1) is assumed. 
When (B2) is assumed, let $\gamma > 0$ be the constant in the assumption (B2). 
Fix $L_0 > 0$ satisfying the first inequality in \eqref{cond-del0} and $L_0 > \gamma$. 
Define $\psi \in C((-b, b))$ by 
\[ \psi(x) := \begin{cases}
(L_0)^\mu (2-\frac{2x}{b})^{-L_0} - (L_0)^\mu (\frac{2}{3})^{-L_0} & \text{for} \; \; \frac{2b}{3} \le x < b, \\
0 & \text{for} \; \; -\frac{2b}{3} < x< \frac{2b}{3}, \\
(L_0)^\mu (2+\frac{2x}{b})^{-L_0} - (L_0)^\mu (\frac{2}{3})^{-L_0} & \text{for} \; \; -b < x \le -\frac{2b}{3}, 
\end{cases} \]
where $\mu$ is the constant given by \eqref{defi-mu}. 

Due to \eqref{initial div} or \eqref{initial div2}, we have $u_0(x) - \psi(x) \to -\infty$ as $x \to \pm b$. 
Therefore, for $\varepsilon > 0$, we can choose $\tilde{v}_{\varepsilon, 0} \in C(\mathbb{R})$ as
\[ \tilde{v}_{\varepsilon, 0} (x) = \max\left\{u_0(x) - \psi(x), -\frac{1}{\varepsilon}\right\} \; \;  \text{for} \; \; x \in (-b, b), \quad \tilde{v}_{\varepsilon, 0}(x) = -\frac{1}{\varepsilon} \; \; \text{for} \; \; |x| \ge b. \]
Then, we can also choose $v_{\varepsilon, 0} \in C^\infty(\mathbb{R})$ satisfying 
\[ |\tilde{v}_{\varepsilon, 0} (x) - v_{\varepsilon, 0}(x)| \le \varepsilon \quad \text{for} \; \; x \in [-b, b]. \] 
In order to apply Proposition \ref{prop:upper-esti-smalla}, for $\varepsilon > 0$ small, we choose $v_0$ as $v_{0, \varepsilon} + \varepsilon + b\sqrt{\varepsilon}$ and $\nu > \frac{1}{\varepsilon}$ large so that the constant $T$ obtained in Proposition \ref{prop:upper-esti-smalla} satisfies $T > \frac{1}{\varepsilon}$. 
Let $v_\varepsilon$ and $T_\varepsilon$ be respectively the super-solution and the constant $T$ obtained in Proposition \ref{prop:upper-esti-smalla} in the above settings. 
We hereafter let $v_\varepsilon (x,t) = \infty$ for $x \in (-b, b)$ and $t \ge T_\varepsilon$. 
Since 
\[ - b\sqrt{\varepsilon} \le - \frac{1}{\sqrt{\nu}} \sqrt{r_\nu^2 - x^2} \le 0 \quad \text{for} \; \; |x| \le \frac{2b}{3} \; \text{and} \; \varepsilon > 0  \; \text{small}\] 
if $\nu > \frac{1}{\varepsilon}$, it then holds that 
\begin{align*}
&\lim_{\varepsilon \to 0} v_\varepsilon(x,0) = u_0(x) \quad \text{for} \; \; x \in (-b, b), \\
&u_0(x) \le v_\varepsilon(x, 0) \quad \text{for} \; \; x \in (-b, b), \; \; \varepsilon > 0, \\
&v_{\varepsilon_2}(x,0) \le v_{\varepsilon_1}(x,0) + 2\varepsilon_2 + b\sqrt{\varepsilon_2} \quad \text{for} \; \; x \in (-b, b), \; \; \varepsilon_1 \ge \varepsilon_2 > 0. 
\end{align*}

Let $w_\varepsilon$ be the sequence of sub-solutions constructed in Section \ref{subsec:exists1} when (B2) is assumed and in Section \ref{subsec:exists2} when (B1) assumed. 
Then, a sequence of solution to \eqref{flow eq} can be constructed by 
\[ u_\varepsilon (x,t) :=  \sup\{u(x,t): u \; \text{is a sub-solution to \eqref{flow eq} satisfying} \; w_\varepsilon \le u \le v_\varepsilon\}. \]
Since $T_\varepsilon \to \infty$ as $\varepsilon \to 0$, we can take the relaxed half limit of $(u_\varepsilon)^*$ globally in time as in Section \ref{subsec:exists1} or Section \ref{subsec:exists2}. 
Therefore, due to the argument in Section \ref{subsec:exists1} or Section \ref{subsec:exists2}, we can show that the limit $u$ is a solution to \eqref{flow eq}--\eqref{initial}. 
Since the rest of the proof is same with the proofs in Section \ref{subsec:exists1} and Section \ref{subsec:exists2}, we omit it. 
\end{proof}

\subsection{Non-existence for $\alpha < 1$ and $\beta \ge \frac{1}{1-\alpha}$} \label{sec:non-existence1}

In this section, we prove the non-existence of solution to \eqref{flow eq}--\eqref{initial} if $\alpha < 1$ and $\beta \ge \frac{1}{1-\alpha}$. 
The proof will be completed by construction a sequence of sub-solutions $v_L$ to \eqref{flow eq} which is uniformly bounded at $t=0$ and diverges to $\infty$ as $L \to \infty$ on $(-b, b) \times (0,\infty)$. 
The idea is from the approximated inequality, which follows from the assumptions (A2) and (A3), 
\[ \frac{f(g(V_y(y)) V_{yy}(y))}{V_{y}} \approx L^{\beta + \beta(1 - \alpha) -1} y^{L(1-\beta(1-\alpha)) - \beta} \ge L^{\beta + \beta(1-\alpha) - 1} =: c_L \quad \]
if $0 < y \le 1$ and $L \gg 1$ for $V(y) = y^{-L}$. 
The boundedness from blow holds due to $1 - \beta(1-\alpha) \le 0$. 
Since $c_L \to \infty$ and $V(y) \to \infty$ as $L \to \infty$ for $0 < y < 1$, the sequence of sub-solutions can be constructed as a shifted and scaled version of traveling functions $V(c_Lt -x)$. 
We first define the sequence of sub-solutions as follows: 

\begin{defi}
For constants $L > 0$ and $c_L > 0$, we define $v_L \in C((-b, b) \times [0, \infty))$ as 
\begin{equation}\label{def-vL} 
v_L(x,t) := \begin{cases}
(\frac{3b-x}{2b} - c_L t)^{-L} & \text{for} \quad b-2bc_Lt \le x < b, \; 0 < t < \frac{1}{c_L}, \\
1 & \text{for} \quad -b < x < b-2bc_Lt, \; 0 \le t < \frac{1}{c_L}, \\
(\frac{b-x}{2b})^{-L} & \text{for} \quad -b < x < b, \; t \ge \frac{1}{c_L}. 
\end{cases} 
\end{equation}
\end{defi}

Then, we can prove that $v_L$ is a sub-solution to \eqref{flow eq} for suitable $c_L$ and $L>0$ large as follows: 

\begin{prop}\label{prop:sub-solutions}
Let $b>0, \alpha<1$ and $\beta \ge \frac{1}{1-\alpha}$. 
Assume that function $f$ satisfies (A1) and (A3) and function $g$ satisfies (A2). 
Then, there exist $L_0 > 0$ such that for any $L \ge L_0$ there exists $c_L$ such that 
\begin{equation}\label{div-cL}
\lim_{L \to \infty} c_L = \infty
\end{equation}
and the function $v_L$ defined by \eqref{def-vL} satisfies
\begin{equation}\label{sub-sol-vL}
(v_L)_t \le f\left(\frac{1}{2}g((v_L)_x) (v_L)_{xx}\right) \quad \text{on} \; \; (-b, b) \times (0, \infty)
\end{equation}
in the viscosity sense. 
\end{prop}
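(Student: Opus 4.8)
The plan is to verify the viscosity subsolution inequality \eqref{sub-sol-vL} directly at every $(x_0,t_0)\in(-b,b)\times(0,\infty)$, after reducing the whole verification to one pointwise estimate that packages the information in (A2)--(A3) and exhibits the role of $\beta\ge\frac{1}{1-\alpha}$. From (A2) and (A3) fix thresholds $\Lambda_g,\Lambda_f>0$ with $g(s)\ge\tfrac{C_{g+}}{2}s^{-\alpha}$ for $s\ge\Lambda_g$ and $f(s)\ge\tfrac{C_{f+}}{2}s^{\beta}$ for $s\ge\Lambda_f$. On each of the three pieces $v_L$ is smooth, and writing $y$ for the relevant affine base ($\tfrac{3b-x}{2b}-c_Lt$ on the first piece, $\tfrac{b-x}{2b}$ on the third) a direct computation gives
\[
(v_L)_x=\tfrac{L}{2b}\,y^{-L-1},\qquad (v_L)_{xx}=\tfrac{L(L+1)}{4b^2}\,y^{-L-2}>0,
\]
with $(v_L)_t=c_L L\,y^{-L-1}$ on the first piece and $(v_L)_t=0$ on the second and third; moreover $y\in(0,1]$ on the first and third pieces (so $v_L\ge1$ everywhere). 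Because $g>0$ and $f$ is increasing, whenever $\phi\in C^2$ touches $v_L$ from above at a point where $v_L(\cdot,t_0)$ is $C^2$ in $x$ one has $\phi_x=(v_L)_x$ and $\phi_{xx}\ge(v_L)_{xx}$, so $f(\tfrac12 g(\phi_x)\phi_{xx})\ge f(\tfrac12 g((v_L)_x)(v_L)_{xx})$. Hence on the interiors of the second and third pieces, where $(v_L)_t=0$ and $\tfrac12 g(\phi_x)\phi_{xx}\ge0$, the inequality is immediate, while on the interior of the first piece \eqref{sub-sol-vL} reduces to
\[
c_L L\,y^{-L-1}\ \le\ f\big(\tfrac12\, g(\tfrac{L}{2b}y^{-L-1})\,\tfrac{L(L+1)}{4b^2}\,y^{-L-2}\big)\qquad\text{for all }y\in(0,1].
\]

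To prove this and to choose $c_L$: for $L$ large one has $\tfrac{L}{2b}y^{-L-1}\ge\tfrac{L}{2b}\ge\Lambda_g$, so the lower bound for $g$ gives
\[
\tfrac12\, g(\tfrac{L}{2b}y^{-L-1})\,\tfrac{L(L+1)}{4b^2}\,y^{-L-2}\ \ge\ A\,L^{1-\alpha}(L+1)\,y^{(\alpha-1)(L+1)-1},\qquad A:=\tfrac{C_{g+}(2b)^{\alpha}}{16b^2}.
\]
Since $\alpha<1$ and $y\le1$, the exponent of $y$ is negative, so this quantity is $\ge A\,L^{2-\alpha}\ge\Lambda_f$ for $L$ large, whence the lower bound for $f$ yields $f(\cdots)\ge\tfrac{C_{f+}A^{\beta}}{2}L^{(1-\alpha)\beta}(L+1)^{\beta}y^{\beta[(\alpha-1)(L+1)-1]}$. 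Dividing the target inequality by $L\,y^{-L-1}$, the power of $y$ that must be bounded below is $(L+1)(1-\beta(1-\alpha))-\beta$, which is $\le-\beta\le0$ precisely because $\beta(1-\alpha)\ge1$; as $0<y\le1$ that factor is $\ge1$. Using $(L+1)^{\beta}\ge L^{\beta}$ one is left with $\tfrac{C_{f+}A^{\beta}}{2}L^{\beta(2-\alpha)-1}$, so the estimate holds with $c_L:=\tfrac{C_{f+}A^{\beta}}{2}L^{\beta(2-\alpha)-1}$. Finally $\beta\ge\tfrac{1}{1-\alpha}>\tfrac{1}{2-\alpha}$ forces $\beta(2-\alpha)-1>0$, hence $c_L\to\infty$, which is \eqref{div-cL}.

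It remains to check \eqref{sub-sol-vL} on the common boundaries of the pieces. At the wave front $x_0=b-2bc_Lt_0$ with $0<t_0<1/c_L$, since $v_L\ge1=v_L(x_0,t_0)$ on a full neighbourhood, a test function $\phi$ touching $v_L$ from above attains a local minimum at $(x_0,t_0)$; then $\phi_t=\phi_x=0$, $\phi_{xx}\ge0$, and \eqref{sub-sol-vL} reduces to $0\le f(\tfrac12 g(0)\phi_{xx})$, true because $g(0)>0$. At the time corner $t_0=1/c_L$, with base $y_0:=\tfrac{b-x_0}{2b}\in(0,1)$: restricting $\phi$ to $x=x_0$ squeezes $\phi_t$ into $[0,\,c_L L\,y_0^{-L-1}]$ between the one-sided time derivatives of $v_L$, while restricting to $t=t_0$ gives $\phi_x=(v_L)_x$ and $\phi_{xx}\ge(v_L)_{xx}$ as above; hence $\phi_t\le c_L L\,y_0^{-L-1}\le f(\tfrac12 g(\phi_x)\phi_{xx})$ by the pointwise estimate. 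This exhausts all points of $(-b,b)\times(0,\infty)$.

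The heart of the argument is the second paragraph: controlling the competition between the singular factor $y^{-L-1}$ produced by $(v_L)_t$ and the factor $y^{\beta[(\alpha-1)(L+1)-1]}$ produced by the right-hand side, and seeing that $\beta\ge\frac{1}{1-\alpha}$ is exactly the condition forcing the net power of $y$ to be nonpositive, so that $(v_L)_t$ is absorbed uniformly in $y\in(0,1]$ while still $c_L\to\infty$. The front-and-corner analysis is the only other delicate point, but it follows the standard one-sided-derivative bookkeeping for viscosity solutions.
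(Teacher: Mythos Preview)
Your proof is correct and follows essentially the same approach as the paper's: both use the power-law lower bounds from (A2)--(A3) to reduce the classical inequality on the first piece to a pointwise estimate in $y\in(0,1]$, choose $c_L$ as a constant multiple of $L^{\beta(2-\alpha)-1}$, and observe that this exponent is positive because $\beta\ge\tfrac{1}{1-\alpha}>\tfrac{1}{2-\alpha}$. Your careful treatment of the wave-front and time-corner interfaces actually supplies the details the paper omits with ``in the other cases, \eqref{sub-sol-vL} obviously holds in the viscosity sense.''
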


\begin{rmk}
Due to \eqref{div-cL}, we can see 
\begin{align*}
&v_L(\cdot, 0) \equiv 1 \quad \text{for} \; \; L > 0, \\
&\lim_{L \to \infty} v_L(x,t) = \infty \quad \text{for} \; \; (x,t) \in (-b, b) \times (0, \infty)
\end{align*}
for $c_L$ obtained in Proposition \ref{prop:sub-solutions}. 
These conditions can be applied to prove (a) in Theorem \ref{thm:non-existence}. 
\end{rmk}

\begin{proof}
We first choose $L_0$ and $c_L$ so that $v_L$ satisfies \eqref{sub-sol-vL} for $b-2bc_Lt < x < b$ and $0 < t < \frac{1}{c_L}$. 
We note that $0 < \frac{3b-x}{2b} - c_Lt < 1$ holds in this case. 
Due to the assumptions (A2) and (A3), there exist constants $L_g, M_g, L_f, M_f > 0$ such that 
\begin{align*}
& g(s) \ge M_g s^{-\alpha} \quad \text{for} \; \; s \ge L_g, \\
& f(s) \ge M_f s^{\beta} \quad \text{for} \; \; s \ge L_f. 
\end{align*}
Define $c_L$ by 
\[ c_L := \frac{M_f M_g^\beta L^{2\beta - \alpha \beta - 1}}{2^\beta(2b)^{(2-\alpha)\beta}}. \]
We note that, due to $2 \beta - \alpha \beta - 1 > 0$, the divergence property \eqref{div-cL} holds. 
Hereafter, we let $b-2bc_Lt < x < b$ and $0 < t < \frac{1}{c_L}$. 
By a simple calculation, we have 
\begin{equation}\label{con-sub1} 
(v_L)_x(x, t) \ge (v_L)_x(b-2bc_Lt, t) = \frac{L}{2b} \ge L_g 
\end{equation}
if $L > 0$ is sufficiently large. 
Therefore, due to $\alpha < 1$, we obtain 
\begin{equation}\label{con-sub2}
\begin{aligned} 
g((v_L)_x(x,t)) (v_L)_{xx}(x,t) \ge&\; M_g ((v_L)_x(x, t))^{-\alpha} (v_L)_{xx}(x, t) \\
=&\; M_g (2b)^{2-\alpha} L^{1-\alpha}(L + 1) \left(\frac{3b-x}{2b} - c_L t\right)^{-L(1-\alpha) - (2- \alpha)} \\
\ge&\; M_g (2b)^{2-\alpha} L^{1-\alpha} (L + 1) \ge 2L_f 
\end{aligned}
\end{equation}
if $L > 0$ is sufficient large. 
Let $L_0 > 0$ be a constant such that \eqref{con-sub1} and \eqref{con-sub2} hold for $L \ge L_0$. 
From $1 - \beta (1-\alpha) \le 0$ and $-2\beta + \alpha \beta + 1 < 0$, for $L \ge L_0$, we have
\begin{align*}
\frac{f(\frac{1}{2}g((v_L)_x(x,t)) (v_L)_{xx}(x,t))}{(v_L)_t (x, t)} \ge&\; \frac{M_f\left(\frac{1}{2}g((v_L)_x(x,t)) (v_L)_{xx}(x,t)\right)^{\beta}}{(v_L)_t(x,t)} \\
\ge&\; \frac{M_f \left(\frac{1}{2}M_g ((v_L)_x(x,t))^{-\alpha} (v_L)_{xx}(x,t)\right)^{\beta}}{(v_L)_t(x,t)} \\
\ge &\; \frac{M_f M_g^\beta L^{2\beta - \alpha \beta -1}}{c_L 2^\beta(2b)^{(2-\alpha)\beta}}\left(\frac{3b - x}{2b} - c_L t \right)^{\delta(1-\beta(1-\alpha)) - 2\beta + \alpha \beta + 1} \\
\ge&\; 1, 
\end{align*}
which yields that $v_L$ satisfies \eqref{sub-sol-vL} if $b-2bc_Lt < x < b$ and $0 < t < \frac{1}{c_L}$ in the classical sense. 
In the other cases, \eqref{sub-sol-vL} obviously holds in the viscosity sense, so we omit the details. 
\end{proof}

Let us now prove Theorem \ref{thm:non-existence} (a). 

\begin{proof}[Proof of Theorem \ref{thm:non-existence} (a)] 
Let $M := \inf_{x \in (-b,b)} u_0(x)$. 
Assume that a solution $u$ to \eqref{flow eq}--\eqref{initial} exists. 
Let $v_L$ be the sequence of sub-solutions to \eqref{flow eq} obtained in Proposition \ref{prop:sub-solutions}. 
Then, from $v_L(\cdot, 0) - (M+1) \le u_0$ and the convexity of $v_L(\cdot, t)$, we have by the comparison result in Theorem \ref{thm:com-conti2} 
\[ v_L(x, t) - (M+1) \le u(x,t) \quad \text{for} \; (x,t) \in (-b,b) \times (0,1/c]. \]
Letting $L \to \infty$, we have $u(x, t) = \infty$ for $(x,t) \in (-b, b) \times (0, \infty)$, which contradicts that $u$ is a function defined on $(-b, b) \times [0, \infty)$. 
\end{proof}

\appendix

\section{Traveling wave solutions}

In this section, we discuss properties of traveling wave solutions to \eqref{singular-neu} formed by $w(x,t) = W(x) + ct$. 
Therefore, the profile equation is given by 
\begin{equation}\label{eq profile}
\begin{cases}
f^{-1}(c) = g(W_x)W_{xx} & \text{for} \; \; -b < x < b, \\
\lim_{x \to \pm b} W_x(x) = \pm \infty. 
\end{cases}
\end{equation}
The existence and uniqueness of the traveling wave solutions in the viscosity sense was studied in \cite{KL} when $\alpha > 2$, where $\alpha$ is the constant in the assumption (A2). 
We here study classical solutions when $\alpha \le 2$, namely, we assume that $W$ is sufficiently smooth in $(-b, b)$. 

\begin{prop}\label{prop:exists-tw}
Let $b>0$. 
Assume that $f, g \in C(\mathbb{R})$ satisfy (A1) and (A2). 
\begin{itemize}
\item[(a)] If $1< \alpha \le 2$, then there exists a pair $(W, c) \in C^2((-b, b)) \times (0, \infty)$ satisfying \eqref{eq profile}. Moreover, if $(\tilde{W}, \tilde{c})$ is another pair satisfying \eqref{eq profile}, then there exists $\alpha \in \mathbb{R}$ such that $\tilde{W} = W$ and $\tilde{c} = c$. Furthermore, $W$ satisfies 
\begin{equation}\label{divergence-W}
\lim_{x \to \pm b} W(x) = \infty
\end{equation}
and there exist constants $D_\pm > 0$ such that 
\begin{equation}\label{div-rate-W} 
\lim_{x \to \pm b} W(x) \left(\psi_{\frac{2-\alpha}{\alpha -1}}(b\mp x)\right)^{-1} = D_\pm, 
\end{equation}
where $\psi_{\frac{2-\alpha}{\alpha -1}}$ is the function defined by \eqref{def-psi}. 
\item[(b)] If $\alpha \le 1$, then there is no pair $(W, c) \in C^2((-b, b)) \times (0, \infty)$ satisfying \eqref{eq profile}.
\end{itemize}
\end{prop}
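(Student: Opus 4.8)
The plan is to integrate the profile ODE once, reducing \eqref{eq profile} to a first-order algebraic relation, and then read off everything from the monotone function $G(p):=\int_0^p g(s)\,ds$, which by (A2) is $C^1$ and strictly increasing with $G(0)=0$. If $(W,c)\in C^2((-b,b))\times(0,\infty)$ solves \eqref{eq profile}, then $\frac{d}{dx}G(W_x(x))=g(W_x(x))W_{xx}(x)=f^{-1}(c)$, so there is a constant $A$ with
\[ G(W_x(x))=f^{-1}(c)\,x+A \qquad \text{for } x\in(-b,b). \]
The asymptotics of $G$ are dictated by (A2): since $|s|^{\alpha}g(s)\to C_{g\pm}$, the improper integrals $\int_0^{\pm\infty} g$ converge precisely when $\alpha>1$, so $G(\pm\infty)=:G_\pm$ are finite with $G_-<0<G_+$ when $\alpha>1$, whereas $G(p)\to\pm\infty$ as $p\to\pm\infty$ when $\alpha\le 1$.

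For part (b), suppose, for contradiction, that such a pair exists with $\alpha\le 1$. Letting $x\uparrow b$ in the displayed identity, the right-hand side converges to the finite number $f^{-1}(c)b+A$; but $W_x(x)\to+\infty$ forces $G(W_x(x))\to G(+\infty)=+\infty$, a contradiction. This proves (b).

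For part (a), assume $1<\alpha\le 2$. Taking $x\to\pm b$ in the identity and using $W_x\to\pm\infty$ forces $f^{-1}(c)b+A=G_+$ and $-f^{-1}(c)b+A=G_-$, hence $f^{-1}(c)=\frac{G_+-G_-}{2b}>0$ and $A=\frac{G_++G_-}{2}$; since $f$ is a strictly increasing bijection this determines $c=f\!\left(\frac{G_+-G_-}{2b}\right)\in(0,\infty)$. Conversely, with this $c,A$ the increasing affine map $x\mapsto f^{-1}(c)x+A$ sends $(-b,b)$ onto $G(\mathbb{R})=(G_-,G_+)$, so $W_x(x):=G^{-1}(f^{-1}(c)x+A)$ is a well-defined $C^1$ function on $(-b,b)$ with $W_x\to\pm\infty$ at $x=\pm b$, and $W(x):=W(0)+\int_0^x W_x(s)\,ds$ solves \eqref{eq profile} and lies in $C^2((-b,b))$. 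Any other solution obeys the same two boundary relations, so it has the same $c$ and the same $W_x$, hence differs from $W$ by an additive constant; this is the claimed uniqueness up to vertical translation.

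It remains to prove \eqref{divergence-W} and the rate \eqref{div-rate-W}. Near $x=b$, writing $\eta:=b-x$, the identity reads $G_+-G(W_x(x))=f^{-1}(c)\eta$; and from $G_+-G(p)=\int_p^\infty g(s)\,ds$ together with (A2) one gets $G_+-G(p)\sim\frac{C_{g+}}{\alpha-1}\,p^{1-\alpha}$ as $p\to+\infty$. Inverting this regularly varying asymptotic yields $W_x(x)\sim K_+(b-x)^{-1/(\alpha-1)}$ with $K_+:=\left(\frac{C_{g+}}{(\alpha-1)f^{-1}(c)}\right)^{1/(\alpha-1)}>0$. Since $\tfrac{1}{\alpha-1}\ge 1$ for $1<\alpha\le2$, $W_x$ is not integrable up to $x=b$, so $W(x)\to\infty$; integrating the asymptotic from a fixed $b_0\in(0,b)$ gives $W(x)\sim\frac{K_+}{\gamma}(b-x)^{-\gamma}$ with $\gamma:=\frac{2-\alpha}{\alpha-1}$ when $1<\alpha<2$, and $W(x)\sim -K_+\log(b-x)$ when $\alpha=2$; in both cases $W(x)\big(\psi_\gamma(b-x)\big)^{-1}\to D_+$, with $D_+=K_+/\gamma>0$ if $\gamma>0$ and $D_+=K_+>0$ if $\gamma=0$. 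The symmetric computation at $x=-b$, with $C_{g-}$ in place of $C_{g+}$, produces $D_->0$. I expect the main obstacle to be exactly this last step: justifying rigorously the inversion $G_+-G(p)\sim\frac{C_{g+}}{\alpha-1}p^{1-\alpha}\ \Rightarrow\ W_x\sim K_+(b-x)^{-1/(\alpha-1)}$, and then that the antiderivative of a function asymptotic to $K_+(b-\cdot)^{-1/(\alpha-1)}$ is asymptotic to $\frac{K_+}{\gamma}(b-\cdot)^{-\gamma}$ (resp. $-K_+\log(b-\cdot)$ at $\alpha=2$); these are routine tail-integral estimates, but the borderline case $\alpha=2$ must be tracked separately throughout.
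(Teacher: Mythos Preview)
Your proof is correct and follows essentially the same route as the paper's: integrate the profile ODE via the antiderivative $G$ of $g$ to determine $c$ and $W_x$ uniquely (the paper takes $G(s)=\int_{-\infty}^s g$, which just absorbs your constant $A$), then extract the boundary asymptotics. For the step you flagged---the rigorous inversion of the tail asymptotic and the subsequent passage from $W_x$ to $W$---the paper simply applies l'H\^{o}pital's rule twice, which handles the generic case $1<\alpha<2$ and the borderline $\alpha=2$ uniformly without separate bookkeeping.
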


\begin{proof}
We first prove (a). 
We will construct $c$ and $W$ satisfying \eqref{eq profile}. 
When $1 < \alpha \le 2$, due to the assumption (A2), the function 
\[ G(s) := \int_{-\infty}^s g(\tilde{s}) \; d\tilde{s} \]
is well-defined. 
Integrating the first equality in \eqref{eq profile} on $(-b, b)$, we have 
\[ 2bf^{-1}(c) = \int_{-b}^b g(W_x(x))W_{xx}(x) \; dx = \int_{-\infty}^\infty g(s) \; ds < \infty, \]
which yields 
\[ c = f\left( \frac{G(\infty)}{2b} \right). \]
We can also obtain by integrating the first equality in \eqref{eq profile} on $(-b, x]$ 
\[ G(W_x(x)) = \int_{-b}^x g(W_x(\tilde{x})) W_{xx}(\tilde{x}) \; dx = (x+b) f^{-1}(c). \]
Since $G$ is strictly increasing, the inverse function $G^{-1}$ is well-defined. 
By choosing $W(0) \in \mathbb{R}$ arbitrary, we thus obtain 
\[ W(x) = \int_0^x W_x(\tilde{x}) \; dx +W(0) = \int_0^x G^{-1}((\tilde{x} + b) f^{-1}(c)) \; d\tilde{x} + W(0). \]
By the construction of $c$ and $W$, the pair $(W, c)$ obviously satisfies the first equation of \eqref{eq profile}. 
Due to 
\begin{align*} 
&W_x(x) = G^{-1}((x + b) f^{-1}(c)) = G^{-1} \left( \frac{(x+b) G(\infty)}{2b}\right), \\ 
&\lim_{s \downarrow 0} G^{-1}(s) = -\infty, \quad \lim_{s \uparrow G(\infty)} G^{-1} (s) = \infty, 
\end{align*}
we can see that the boundary condition in \eqref{eq profile} holds. 
The unique in the sense described in (a) also obviously holds by the construction of $c$ and $W$. 

The divergence property \eqref{divergence-W} and its divergence rate \eqref{div-rate-W} can be proved by l'H\^{o}pital's rule. 
Indeed, due to l'H\^{o}pital's rule, we have by the assumption (A2) 
\[ \lim_{s \to \infty} \frac{2bf^{-1}(c) - G(s)}{s^{-\alpha+1}} = \lim_{s \to \infty} \frac{-g(s)}{(1-\alpha)s^{-\alpha}} = \frac{C_+}{\alpha-1}, \]
where $C_+$ is the constant in the assumption (A2). 
Since $W_x(x) \to \infty$ as $x \to b$, changing the variables as $s = W_x(x)$ yields 
\[ \lim_{x \to b} \frac{f^{-1}(c) (b-x)}{(W_x(x))^{-\alpha + 1}} = \frac{C_+}{\alpha -1}, \]
which is equivalent to 
\[ \lim_{x \to b} \frac{W_x(x)}{(b-x)^{-\frac{1}{\alpha -1}}} = \left(\frac{f^{-1}(c) (\alpha -1)}{C_+} \right)^{-\frac{1}{\alpha -1}}. \]
Therefore, since $-\frac{1}{\alpha-1} \le 1$, we can see that \eqref{divergence-W} holds at $x=b$. 
We then can apply l'H\^{o}pital's theorem again to obtain \eqref{div-rate-W} at $x=b$ due to $-\frac{2-\alpha}{\alpha-1} - 1 = -\frac{1}{\alpha-1}$. 
At the opposite point $x=-b$, a similar argument can be applied to obtain \eqref{divergence-W} and \eqref{div-rate-W}. 

We next prove (b). 
Assume the existence of a pair $(c, W)$ satisfying \eqref{eq profile} for contradiction. 
Integration of the first equation of \eqref{eq profile} on $(-b,b)$ shows by the assumption (A2) with $\alpha \le 1$ 
\[ f^{-1}(c) = \int_{-b}^b g(W_x(x)) W_{xx}(x) \; dx = \int_{-\infty}^\infty g(s) \; ds = \infty, \]
which contradicts to $c \in \mathbb{R}$. 
\end{proof}

\bigskip

\noindent
{\bf Data availability}: There is no conflict of interest. 



\begin{thebibliography}{10}

\bibitem{AlWu2}
Steven~J. Altschuler and Lang~F. Wu.
\newblock Translating surfaces of the non-parametric mean curvature flow with
  prescribed contact angle.
\newblock {\em Calc. Var. Partial Differential Equations}, Vol.~2, No.~1, pp.
  101--111, 1994.

\bibitem{AlWu}
Steven~J. Altschuler and Lang-Fang Wu.
\newblock Convergence to translating solutions for a class of quasilinear
  parabolic boundary problems.
\newblock {\em Math. Ann.}, Vol. 295, No.~4, pp. 761--765, 1993.

\bibitem{AtBa}
A.~Attouchi and G.~Barles.
\newblock Global continuation beyond singularities on the boundary for a
  degenerate diffusive {H}amilton-{J}acobi equation.
\newblock {\em J. Math. Pures Appl. (9)}, Vol. 104, No.~2, pp. 383--402, 2015.

\bibitem{BaDa1}
Guy Barles and Francesca Da~Lio.
\newblock Remarks on the {D}irichlet and state-constraint problems for
  quasilinear parabolic equations.
\newblock {\em Adv. Differential Equations}, Vol.~8, No.~8, pp. 897--922, 2003.

\bibitem{BaDa2}
Guy Barles and Francesca Da~Lio.
\newblock On the generalized {D}irichlet problem for viscous
  {H}amilton-{J}acobi equations.
\newblock {\em J. Math. Pures Appl. (9)}, Vol.~83, No.~1, pp. 53--75, 2004.

\bibitem{BPT}
Guy Barles, Alessio Porretta, and Thierry~Tabet Tchamba.
\newblock On the large time behavior of solutions of the {D}irichlet problem
  for subquadratic viscous {H}amilton-{J}acobi equations.
\newblock {\em J. Math. Pures Appl. (9)}, Vol.~94, No.~5, pp. 497--519, 2010.

\bibitem{BrCa}
Ha\"im Brezis and Thierry Cazenave.
\newblock A nonlinear heat equation with singular initial data.
\newblock {\em J. Anal. Math.}, Vol.~68, pp. 277--304, 1996.

\bibitem{CeNo}
Annalisa Cesaroni and Matteo Novaga.
\newblock Long-time behavior of the mean curvature flow with periodic forcing.
\newblock {\em Comm. Partial Differential Equations}, Vol.~38, No.~5, pp.
  780--801, 2013.

\bibitem{CCD}
Beomjun Choi, Kyeongsu Choi, and Panagiota Daskalopoulos.
\newblock Convergence of curve shortening flow to translating soliton.
\newblock {\em Amer. J. Math.}, Vol. 143, No.~4, pp. 1043--1077, 2021.

\bibitem{ChDa}
Kyeongsu Choi and Panagiota Daskalopoulos.
\newblock The {$Q_k$} flow on complete non-compact graphs.
\newblock {\em Calc. Var. Partial Differential Equations}, Vol.~61, No.~2, pp.
  Paper No. 73, 19, 2022.

\bibitem{CDKL}
Kyeongsu Choi, Panagiota Daskalopoulos, Lami Kim, and Ki-Ahm Lee.
\newblock The evolution of complete non-compact graphs by powers of {G}auss
  curvature.
\newblock {\em J. Reine Angew. Math.}, Vol. 757, pp. 131--158, 2019.

\bibitem{CIL}
Michael~G. Crandall, Hitoshi Ishii, and Pierre-Louis Lions.
\newblock User's guide to viscosity solutions of second order partial
  differential equations.
\newblock {\em Bull. Amer. Math. Soc. (N.S.)}, Vol.~27, No.~1, pp. 1--67, 1992.

\bibitem{FHIL}
Yohei Fujishima, Kotaro Hisa, Kazuhiro Ishige, and Robert Laister.
\newblock Local solvability and dilation-critical singularities of
  supercritical fractional heat equations.
\newblock {\em J. Math. Pures Appl. (9)}, Vol. 186, pp. 150--175, 2024.

\bibitem{FuIo}
Yohei Fujishima and Norisuke Ioku.
\newblock Existence and nonexistence of solutions for the heat equation with a
  superlinear source term.
\newblock {\em J. Math. Pures Appl. (9)}, Vol. 118, pp. 128--158, 2018.

\bibitem{Gbook}
Yoshikazu Giga.
\newblock {\em Surface evolution equations}, Vol.~99 of {\em Monographs in
  Mathematics}.
\newblock Birkh\"auser Verlag, Basel, 2006.
\newblock A level set approach.

\bibitem{IRT}
Norisuke Ioku, Bernhard Ruf, and Elide Terraneo.
\newblock Existence, non-existence, and uniqueness for a heat equation with
  exponential nonlinearity in {$\Bbb{R}^2$}.
\newblock {\em Math. Phys. Anal. Geom.}, Vol.~18, No.~1, pp. Art. 29, 19, 2015.

\bibitem{KL}
Takashi Kagaya and Qing Liu.
\newblock Singular {N}eumann boundary problems for a class of fully nonlinear
  parabolic equations in one dimension.
\newblock {\em SIAM J. Math. Anal.}, Vol.~53, No.~4, pp. 4350--4385, 2021.

\bibitem{LRSV}
R.~Laister, J.~C. Robinson, M.~Sier\.{z}\c{e}ga, and A.~Vidal-L\'opez.
\newblock A complete characterisation of local existence for semilinear heat
  equations in {L}ebesgue spaces.
\newblock {\em Ann. Inst. H. Poincar\'e{} C Anal. Non Lin\'eaire}, Vol.~33,
  No.~6, pp. 1519--1538, 2016.

\bibitem{LaL2}
J.-M. Lasry and P.-L. Lions.
\newblock Nonlinear elliptic equations with singular boundary conditions and
  stochastic control with state constraints. {I}. {T}he model problem.
\newblock {\em Math. Ann.}, Vol. 283, No.~4, pp. 583--630, 1989.

\bibitem{LYCGX}
Songzhe Lian, Hongjun Yuan, Chunling Cao, Wenjie Gao, and Xiaojing Xu.
\newblock On the {C}auchy problem for the evolution {$p$}-{L}aplacian equations
  with gradient term and source.
\newblock {\em J. Differential Equations}, Vol. 235, No.~2, pp. 544--585, 2007.

\bibitem{LZ}
Heqian Lu and Zhengce Zhang.
\newblock The {C}auchy problem for a parabolic {$p$}-{L}aplacian equation with
  combined nonlinearities.
\newblock {\em J. Math. Anal. Appl.}, Vol. 514, No.~2, pp. Paper No. 126329,
  40, 2022.

\bibitem{SL}
Haifeng Shang and Fengquan Li.
\newblock On the {C}auchy problem for the evolution {$p$}-{L}aplacian equations
  with gradient term and source and measures as initial data.
\newblock {\em Nonlinear Anal.}, Vol.~72, No. 7-8, pp. 3396--3411, 2010.

\bibitem{Su}
Masamitsu Suzuki.
\newblock Local existence and nonexistence for fractional semilinear parabolic
  systems with singular initial data.
\newblock {\em Evol. Equ. Control Theory}, Vol.~14, No.~4, pp. 594--609, 2025.

\bibitem{Th}
Thierry Tabet~Tchamba.
\newblock Large time behavior of solutions of viscous {H}amilton-{J}acobi
  equations with superquadratic {H}amiltonian.
\newblock {\em Asymptot. Anal.}, Vol.~66, No. 3-4, pp. 161--186, 2010.

\bibitem{TaYa}
Jin Takahashi and Hikaru Yamamoto.
\newblock Solvability of a semilinear heat equation on {R}iemannian manifolds.
\newblock {\em J. Evol. Equ.}, Vol.~23, No.~2, pp. Paper No. 33, 55, 2023.

\bibitem{To}
Berikbol~T. Torebek.
\newblock Critical exponents for the {$p$}-{L}aplace heat equations with
  combined nonlinearities.
\newblock {\em J. Evol. Equ.}, Vol.~23, No.~4, pp. Paper No. 71, 15, 2023.

\bibitem{WaWo}
Xiaoliu Wang and Weifeng Wo.
\newblock On the stability of stationary line and grim reaper in planar
  curvature flow.
\newblock {\em Bull. Aust. Math. Soc.}, Vol.~83, No.~2, pp. 177--188, 2011.

\bibitem{WeZh}
Xinran Wei and Mengmeng Zhang.
\newblock Solvability of a semilinear parabolic equation on {R}iemannian
  manifolds.
\newblock {\em Partial Differ. Equ. Appl.}, Vol.~5, No.~6, pp. Paper No. 32,
  22, 2024.

\bibitem{We}
Fred~B. Weissler.
\newblock Local existence and nonexistence for semilinear parabolic equations
  in {$L\sp{p}$}.
\newblock {\em Indiana Univ. Math. J.}, Vol.~29, No.~1, pp. 79--102, 1980.

\end{thebibliography}

\end{document}